\tikzstyle{densely dotted}=[dash pattern=on \pgflinewidth off 0.5pt]
\tikzset{anchorbase/.style={baseline={([yshift=-0.5ex]current bounding box.center)}},
tinynodes/.style={font=\tiny,text height=0.25ex,text depth=0.05ex},
smallnodes/.style={font=\scriptsize,text height=0.75ex,text depth=0.15ex},
usual/.style={line width=0.9,color=black},
dusual/.style={line width=0.9,color=spinach,densely dashed},
pole/.style={line width=3.0,color=specialgray},
crossline/.style={preaction={draw=white,line width=5.0pt,-},preaction={draw=black,line width=0.9pt,-}},
crosspole/.style={preaction={draw=white,line width=6.0pt,-},preaction={draw=specialgray,line width=3.0pt,-}},
mor/.style={line width=0.75,color=black,fill=cream},
blob/.style={circle,fill,minimum size=5.0pt,inner sep=0pt,outer sep=0pt},
blobed/.style n args={3}{decoration={markings,post length=0.5mm,pre length=0.5mm,
mark=at position #1 with {\node[blob,#3,label=left:$#2\!$]at (0,0){};}
},postaction={decorate}},
rblobed/.style n args={3}{decoration={markings,post length=0.5mm,pre length=0.5mm,
mark=at position #1 with {\node[blob,#3,label=right:$\!#2$]at (0,0){};}
},postaction={decorate}},
cutline/.style={line width=2.25,color=purple,densely dotted},
}
\tikzstyle directed=[postaction={decorate,decoration={markings,mark=at position #1 with {\arrow[line width=0.25mm, black]{>}}}}]
\tikzstyle rdirected=[postaction={decorate,decoration={markings,mark=at position #1 with {\arrow[line width=0.25mm, black]{<}}}}]
\newcommand{\tikzdiagh}[2][]{\tikz[#1,thick,baseline={([yshift=1ex+#2]current bounding box.center)}]}
\newcommand{\tikzdiagc}[1][]{\tikzdiagh[#1]{-1ex}}
\tikzstyle{tikzdot}=[fill, circle, inner sep=2pt]
\tikzstyle{smoltikzdot}=[fill=white, draw=black, circle, inner sep=1pt]
\tikzset{
    partial ellipse/.style args={#1:#2:#3}{
        insert path={+ (#1:#3) arc (#1:#2:#3)}
    }
}
\definecolor{myblue}{rgb}{0,.5,1}
\definecolor{myred}{rgb}{0.9,0,0}
\definecolor{mygreen}{rgb}{0,0.7,0}
\def\down{\vee}
\def\up{\wedge}
\newcommand{\cP}{\mathcal{P}}
\DeclareMathOperator{\End}{End}
\DeclareMathOperator{\im}{im}
\newtheorem{mainthm}{Main Theorem}
\newtheorem{thm}{Theorem}[section]
\newtheorem{lem}[thm]{Lemma}
\newtheorem{cor}[thm]{Corollary}
\newtheorem{prop}[thm]{Proposition}
\newtheorem{exe}[thm]{Example}
\theoremstyle{definition}
\newtheorem{defn}[thm]{Definition}
\newtheorem{rem}[thm]{Remark}
\newtheorem*{rem*}{Remark}
\newcommand{\cO}{\mathcal{O}}
\DeclareMathOperator{\id}{Id}
\newcommand{\nnfootnote}[1]{%
\begin{NoHyper}
\renewcommand\thefootnote{}\footnote{#1}%
\addtocounter{footnote}{-1}%
\end{NoHyper}
}
\long\def\@makecaption#1#2{%
    \vskip 10pt
    \setbox\@tempboxa\hbox{%
\small{#1: }\ignorespaces #2}%
    \ifdim \wd\@tempboxa >\captionwidth {%
        \rightskip=\@captionmargin\leftskip=\@captionmargin
        \unhbox\@tempboxa\par}%
      \else
        \hbox to\hsize{\hfil\box\@tempboxa\hfil}%
    \fi}
\newdimen\@captionmargin\@captionmargin=2\parindent
\newdimen\captionwidth\captionwidth=\hsize
\def\makeautorefname#1#2{\expandafter\def\csname#1autorefname\endcsname{#2}}
\newcommand{\CupConnect}{\text{\----}}
\newcommand{\RayConnect}{\mid\hspace{-5pt}\text{\----}\,i}
\newcommand{\ba}{{\bf a}}
\DeclareMathOperator{\spn}{span}
\newcommand{\spr}{\mathcal{B}}
\newcommand{\espr}{\mathcal{B}^{\mathrm{e}}}
\newcommand{\dspr}{\mathcal{B}^{\Delta}}
\title{Two-row Delta Springer varieties}
\author{Abel Lacabanne}
\address{Laboratoire de Math{\'e}matiques Blaise Pascal (UMR 6620), Universit{\'e} Clermont Auvergne, Campus Universitaire des C{\'e}zeaux, 3 place Vasarely, 63178 Aubi{\`e}re Cedex, France\newline \href{http://www.normalesup.org/~lacabanne}{www.normalesup.org/$\sim$lacabanne}, \href{https://orcid.org/0000-0001-8691-3270}{ORCID 0000-0001-8691-3270}}
\email{abel.lacabanne@uca.fr}
\author{Pedro Vaz}
\address{Institut de Recherche en Math{\'e}matique et Physique, 
Universit{\'e} Catholique de Louvain, Chemin du Cyclotron 2,  
1348 Louvain-la-Neuve, Belgium, \newline \href{https://perso.uclouvain.be/pedro.vaz}{https://perso.uclouvain.be/pedro.vaz}, \href{https://orcid.org/0000-0001-9422-4707}{ORCID 0000-0001-9422-4707}}
\email{pedro.vaz@uclouvain.be}
\author{Arik Wilbert}
\address{Department of Mathematics \& Statistics,
University of South Alabama,
Mobile, AL 36688, USA
\newline \href{http://www.arik-wilbert.de/}{http://www.arik-wilbert.de/}, \href{https://orcid.org/0000-0003-3738-1428}{ORCID 0000-0003-3738-1428}
}
\email{wilbert@southalabama.edu}
\begin{document}
%
%
\newdimen\captionwidth\captionwidth=\hsize
%

\begin{abstract}
We study the geometry and topology of $\Delta$-Springer varieties associated with two-row partitions. These varieties were introduced in recent work by Griffin--Levinson--Woo to give a geometric realization of a symmetric function appearing in the Delta conjecture by Haglund--Remmel--Wilson. We provide an explicit and  combinatorial description of the irreducible components of the two-row $\Delta$-Springer variety and compare it to the ordinary two-row Springer fiber as well as Kato's exotic Springer fiber corresponding to a one-row bipartition. In addition to that, we extend the action of the symmetric group on the homology of the two-row $\Delta$-Springer variety to an action of a degenerate affine Hecke algebra and relate this action to a $\mathfrak{gl}_{2}$-tensor space.
\end{abstract}

\nnfootnote{\textit{Mathematics Subject Classification 2020.} Primary: 14M15; Secondary: 05E10, 20C08.}
\nnfootnote{\textit{Keywords.} Springer theory, flag varieties, action on homology, degenerate affine Hecke algebra.}

%
%
\maketitle

%

{\hypersetup{hidelinks}
\tableofcontents 
}
\pagestyle{myheadings}
\markboth{\em\small A.~Lacabanne, P.~Vaz and A.~Wilbert}{\em\small Two-row Delta Springer varieties}


\section{Introduction}

The Springer correspondence \cite{spr76, spr78} provides a powerful bridge between geometry (via Springer fibers) and algebra (via representations of Weyl groups) facilitating deep insights and results in both fields. In particular, it offers a geometric construction of the irreducible complex representations of Weyl groups. These representations are obtained in the top degree cohomology of Springer fibers, which are the fibers of the desingularization of the nilpotent cone.

In type $A$, nilpotent elements of the Lie algebra $\mathfrak{sl}_{n}$ are classified by their Jordan type, or equivalently, by a partition of $n$. Even in type $A$, the geometry of these fibers is not well understood. In the case of Springer fibers associated with two-row partitions, the situation is much nicer and has been studied extensively. For example, the irreducible components of two-row Springer fibers as well as their intersections are known to be smooth \cite{Fun03,SW12}. The Springer fibers for which all irreducible components are smooth have been classified in \cite{FrMe10}. In the two-row case, there also exists a nice diagrammatic description of the irreducible components in terms of cup diagrams \cite{Fun03,SW12}. A homeomorphic topological model has been built for these varieties \cite{Kho04,Weh09} and the action of the symmetric group on the top degree cohomology has a skein theoretic interpretation \cite{RuTy11,russell}. In types $C$ and $D$, the geometry and topology of two-row Springer fibers have been studied in \cite{EhSt16,Wil18,StWi19,ILW22}. As in type $A$, the irreducible components and their intersections are smooth and they admit explicit descriptions in terms of cup diagrams.

There also exists another version of the Springer correspondence for the symplectic group \cite{Kato06} which is cleaner than the original Springer correspondence in type $C$. In fact, Kato's exotic Springer correspondence yields a bijection between orbits in the exotic nilpotent cone and irreducible representations of the Weyl group of type $C$. In contrast to that, the original Springer correspondence is more intricate outside of type $A$ and requires extra data in terms of the component group. In the exotic case, the nilpotent orbits have been classified explicitly using bipartitions \cite{achar-henderson}. The irreducible components of exotic Springer fibers have been studied thoroughly in \cite{NaRoSa}, as well as~\cite{SW22} in the specific case of one-row bipartitions. As for the ordinary two-row Springer fibers, the irreducible components of exotic Springer fibers for one-row bipartitions can be described using certain cup diagrams.

Even though the study of Springer fibers originated in the geometric representation theory of Weyl groups, many connections to representation theory, combinatorics, geometry, and topology have been established in recent years. These connections are already rich and interesting when one restricts to the two-row case. For example, the diagrammatics appearing in the study of two-row Springer fibers have an interpretation in terms of parabolic Kazhdan--Lusztig theory \cite{Fun03,CDVDM,CDV}. Furthermore, the cohomology of two-row Springer fibers in type $A$ is related to Khovanov's arc algebra \cite{Kh-arc}, which provides invariants of tangles, and thus an interesting connection to low-dimensional topology. In fact, it turns out that the cohomology ring of the Springer fiber is isomorphic to the center of the principal block of parabolic category $\mathcal{O}$,~\cite{Br-cat-O,St-cat-O}. Using a generalization of Khovanov's arc algebras, deep connections to the representation theory of Lie (super)algebras and (walled) Brauer algebras were established in work by Brundan--Stroppel \cite{BS1,BS2,BS3,BS4} and Ehrig--Stroppel \cite{EhSt-diagrammatic,EhSt-koszul,EhSt17,EhSt}. As evident from the above, two-row Springer fibers have proven to have important applications in the field of categorification and $2$-representation theory.

\subsubsection*{Motivation}

The research that led to this paper originated in an attempt to understand \cite{LaNaVa} in terms of Springer theory, to define an arc algebra categorifying the Hecke algebra of type $B$ with unequal parameters, or more precisely one of its quotients, the blob algebra of Martin--Saleur \cite{MaSa-blob}. The representation theory of this algebra is governed by one-row bipartitions which naturally appear in Kato's exotic Springer correspondence. The exotic Springer fibers associated with one-row bipartitions share many geometric properties with the ordinary two-row Springer fibers in type $A$. Also note that the combinatorics of the blob algebra naturally appear when studying exotic Springer fibers for one-row bipartitions, \cite{SW22}. 

Recently, yet another Springer-type variety, called a $\Delta$-Springer variety, has been introduced in work of Griffin--Levinson--Woo,~\cite{GLW21}. This variety gives a geometric interpretation of a ring generalizing both the cohomology ring of a Springer fiber in type $A$, and the Haglund--Rhoades--Shimozono ring, \cite{HRS18}. As remarked in \cite{GiGr23}, the $\Delta$-Springer variety turns out to be a generalized Springer fiber in the sense of Borho--MacPherson, \cite{BoMc}. In the two-row case, the $\Delta$-Springer variety is intimately related to the exotic Springer fiber (see \autoref{main_thm_B}). We believe that it would be interesting to define an arc algebra whose center is isomorphic to the cohomology of a two-row $\Delta$-Springer variety, and establish connections with Kazhdan--Lusztig theory, generalizing the rich picture known in type $A$.

\subsubsection*{What we do in this paper}

In this paper, we specifically study $\Delta$-Springer varieties associated with two-row partitions. We develop a diagrammatic combinatorics well suited for comparison with ordinary two-row Springer fibers as well as exotic Springer fibers associated with one-row bipartitions. 

In order to define the $\Delta$-Springer variety in the two-row case, we fix a two-row partition $(n-k,k)$ of $n$ and an integer $0\leq m \leq k$. Moreover, we fix a nilpotent endomorphism $x$ of $\mathbb{C}^{n}$ of Jordan type $(n-k,k)$. The $\Delta$-Springer variety $\dspr_{(n-k,k),m}$ consists of all partial flags in $\mathbb{C}^n$ of the form $\{0\} = F_{0} \subseteq F_{1} \subseteq \dots \subseteq F_{n-m} \subseteq \mathbb{C}^n$ satisfying $\dim(F_{i})= i$, $xF_{i}\subseteq F_{i-1}$ for all $1\leq i \leq n-m$, and $\im x^{m}\subseteq F_{n-m}$. We refer to \autoref{def:DeltaSpr} for the general case.

The first step in our study is to provide an explicit description of the irreducible components of the two-row $\Delta$-Springer fibers using the notion of a $\Delta$-cup diagram. An example of a $\Delta$-cup diagram is given by
\begin{gather*}
\begin{tikzpicture}[scale=0.6]
\draw[thick,densely dotted] (-1.5,1) to (5.5,1);
\draw[black,fill=black] (-1,1) circle (2.0pt);
\draw[black,fill=black] (0,1) circle (2.0pt);
\draw[black,fill=black] (1,1) circle (2.0pt);
\draw[black,fill=black] (2,1) circle (2.0pt);
\draw[black,fill=black] (3,1) circle (2.0pt);
\draw[black,fill=black] (4,1) circle (2.0pt);
\draw[black,fill=black] (5,1) circle (2.0pt);
\draw[very thick] (-1,1) to[out=-80,in=180] (-.5,.5) to[out=0,in=-100] (0,1);
\draw[very thick] (1,1) to[out=-80,in=180] (2.5,-.1) to[out=0,in=-100] (4,1);
\draw[very thick] (2,1) to[out=-80,in=180] (2.5,.5) to[out=0,in=-100] (3,1);
\draw[very thick] (5,-1) to (5,1);
\draw[cutline] (3.5,-1) to (3.5,1.15);
\end{tikzpicture}
\end{gather*}
This is a crossingless diagram consisting of cups and rays attached to finitely many vertices on a horizontal line. In addition to that, there exists a vertical dotted red line, a so-called cut line, dividing the diagram into a left and a right part. We only allow right endpoints of cups and rays to the right of the cut line. More details on $\Delta$-cup diagrams can be found in \autoref{def:DeltaCup}. 

\autoref{sec:structure_irred_comp} is devoted to the study of the irreducible components of the $\Delta$-Springer variety $\dspr_{(n-k,k),m}$, and we prove the following result.

\begin{mainthm}[\autoref{thm:algebraic_main_result}]
    There exists a bijection between the irreducible components of the $\Delta$-Springer variety $\dspr_{(n-k,k),m}$ and the $\Delta$-cup diagrams on $n$ points with $k$ cups and $m$ vertices to the right of the cut line. We give explicit relations describing all flags contained in the irreducible component associated with a given $\Delta$-cup diagram. As a consequence, we show that each irreducible component is an iterated $\mathbb{P}^{1}$-bundle, and, in particular, it is smooth.
\end{mainthm}

To some extent, we like to think of the $\Delta$-Springer varieties as an interpolation between Springer fibers in type $A$ and exotic Springer fibers that have a type $C$ flavor. Indeed, in the extremal case $m=0$, the $\Delta$-Springer variety $\dspr_{(n-k,k),0}$ is equal to the two-row Springer fiber associated with the partition $(n-k,k)$, and in the extremal case $m=k$, the $\Delta$-Springer variety $\dspr_{(n-k,k),k}$ is isomorphic to the exotic Springer fiber associated with the bipartition $((n-2k),(k))$. Using our explicit description of irreducible components, we relate in \autoref{sec:comparison}, for any value of $0\leq m\leq k$, the two-row $\Delta$-Springer variety $\dspr_{(n-k,k),m}$ to the exotic Springer fiber $\espr_{((n-m-k),(k))}$ associated with a one-row bipartition. 

\begin{mainthm}[\autoref{prop:comparison_exotic_delta}] \label{main_thm_B}
    There exists an isomorphism of algebraic varieties from the  $\Delta$-Springer variety $\dspr_{(n-k,k),m}$ to a closed subvariety of the exotic Springer fiber $\espr_{((n-m-k),(k))}$.
\end{mainthm}

We also give a negative answer to \cite[Question 8.7]{GLW21} (see \autoref{ex:counter-ex}). The natural birational map from a union of irreducible components of the Springer fiber $\spr_{(n-k,k)}$ to the $\Delta$-Springer variety $\dspr_{(n-k,k),m}$ described in \cite[Remark 5.12]{GLW21} is not an isomorphism since it is not bijective.

\medskip

The next step is a representation theoretic study of the homology of the two-row $\Delta$-Springer variety. In \cite{GLW21}, an action of a symmetric group is constructed on each degree of the homology of any $\Delta$-Springer variety. Moreover, the top degree representation is identified with a Specht module associated with a skew partition. In \autoref{sec:top-model-action-symmetric}, we construct a topological model for the $\Delta$-Springer variety in the two-row case and use this model to identify the representation of the symmetric group $S_{n-m}$ in every degree of its homology (not only the top degree).

\begin{mainthm}[\autoref{prop:action_symmetric_hom}]
    For $0\leq d \leq k$, the degree $2d$ of the homology of the $\Delta$-Springer variety $\dspr_{(n-k,k),m}$ is isomorphic, as an $S_{n-m}$-representation, to the Specht module associated with the skew partition $(n-d,d)/(m)$.
\end{mainthm}

\begin{rem*}
    We construct the action of the symmetric group on the homology of the $\Delta$-Springer variety (or, more precisely, on its topology model) by embedding the homology into the homology of a product of $2$-spheres. The symmetric group naturally acts on the product of $2$-spheres by permuting spheres. We show that the action induced in homology restricts to an action on the homology of the $\Delta$-Springer variety. This construction also provides a skein theoretic description of this representation, answering \cite[Question 8.6]{GLW21}.
\end{rem*}

The above result shows that each degree of the homology of the $\Delta$-Springer variety is not always irreducible as a representation of the symmetric group $S_{n-m}$. In contrast to that, the representations of the symmetric group on the homology of ordinary two-row Springer fibers, as well as the representations of the Weyl group of type $C$ on the homology of exotic Springer fibers associated with one-row bipartitions, are irreducible. In \autoref{sec:degenerate-affine}, we extend the action of the symmetric group on the homology of the $\Delta$-Springer variety to an action of the degenerate affine Hecke algebra.

\begin{mainthm}[\autoref{thm:stability_degenerate} and \autoref{cor:irred_H_action_Delta}]
    Each degree of the homology of a two-row $\Delta$-Springer fiber is an irreducible representation of the degenerate affine Hecke algebra.
\end{mainthm}

In addition to the irreducibility of the representations in each degree, we find it surprising that the action of the degenerate affine Hecke algebra preserves the homological degree in the first place. In order to prove the above theorem, we identify the action on homology with the action of the degenerate affine Hecke algebra on a $\mathfrak{gl}_{2}$-tensor space, using a version of Schur--Weyl duality.

\subsubsection*{Conventions}

In this paper, all varieties and vector spaces are defined over the field of complex numbers. If $X$ is a topological space, we denote by $H_{*}(X)$ its singular homology with complex coefficients and by $H^{*}(X)$ its cohomology with complex coefficients. It follows from the universal coefficient theorem that homology and cohomology are dual to each other degreewise, that is $H_{i}(X) \cong \mathrm{Hom}(H^{i}(X),\mathbb{C})\cong H^{i}(X)$ for all nonnegative integers $i$. Note that this duality is different from Poincar\'e duality. The duality implies that all results originally proved for cohomology remain true for homology, and vice versa, as long as they only depend on the vector space structure.

\subsubsection*{Acknowledgements}

The authors would like to thank Sean T. Griffin for helpful conversations and clarifications on \cite{GLW21}. They also want to thank Alexis Langlois-Rémillard, Grégoire Naisse, Loïc Poulain d'Andecy, Simon Riche, Toshiaki Shoji, and Catharina Stroppel for discussions around this work, and the anonymous referees for their detailed comments. 

AL was partially supported by a PEPS JCJC grant from INSMI (CNRS). PV was supported by the Fonds de la Recherche Scientifique - FNRS under Grants no. MIS-F.4536.19 and CDR-J.0189.23. AW was supported by an AMS--Simons PUI Grant.

\section{\texorpdfstring{$\Delta$}{Delta}-Springer varieties}

Given an integer $n$ and a sequence $(d_1,\ldots,d_r)$, we will denote by $\mathcal{F}l_{(d_1,\ldots,d_r)}(\mathbb{C}^n)$ the set of partial flags $F_{\bullet} = (F_i)_{1 \leq i \leq r}$ such that $\dim(F_i/F_{i-1})=d_i$. Concerning flags, we will always use the convention that $F_0 = \{0\}$. We will write shortly $\mathcal{F}l(\mathbb{C}^n)$ for the set of complete flags in $\mathbb{C}^n$, that is the set $\mathcal{F}l_{(1^n)}(\mathbb{C}^n)$.

\subsection{Springer fibers}

Springer fibers arise as fibers of the desingularization of the nilpotent cone (see \cite{spr78}). The \emph{Springer fiber} $\spr_{x}$ associated with a nilpotent element $x\in \mathfrak{gl}_n(\mathbb{C})$ is the following subset of complete flags in $\mathbb{C}^n$:
\begin{gather*}
    \spr_x = \{ F_{\bullet} \in \mathcal{F}l(\mathbb{C}^n) \mid xF_i \subseteq F_{i-1} \text{ for } i\leq n\}.
\end{gather*}

This is a subvariety of the flag variety which, up to isomorphism, depends only on the orbit of $x$ under the action of $GL_n(\mathbb{C})$ by conjugation. Given a partition $\lambda$ of $n$, we will then usually write $\spr_{\lambda}$ for the Springer fiber associated with a nilpotent element in $\mathfrak{gl}_n(\mathbb{C})$ of Jordan type $\lambda$.

The study of the cohomology of these Springer fibers is related to the representation theory of the symmetric group via the Springer correspondence. Given a partition $\lambda = (\lambda_1,\ldots,\lambda_r)$ of $n$, we denote by $n(\lambda)$ the integer $\sum_{i=1}^r \frac{\lambda_i(\lambda_i-1)}{2}$.

\begin{thm}[{\cite[Section 1]{spr78}}]
    The following hold:
    \begin{enumerate}
        \item The Springer fiber $\spr_{\lambda}$ is equidimensional of dimension $n(\lambda)$. 

        \item There exists an irreducible action of the symmetric group $S_n$ on $H^{2n(\lambda)}(\spr_{\lambda})$.
        
        \item The map $\lambda \mapsto H^{2n(\lambda)}(\spr_{\lambda})$ is a bijection between partitions of $n$ and irreducible complex representations of $S_n$.
    \end{enumerate}    
\end{thm}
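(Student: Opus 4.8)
The plan is to treat the three assertions separately: I would establish the dimension statement (1) by an explicit affine paving, and then extract the representation-theoretic statements (2) and (3) from the geometry of the Springer resolution together with the decomposition theorem.

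For (1), I would produce a paving of $\spr_\lambda$ by affine spaces indexed by standard Young tableaux of shape $\lambda$. Concretely, intersecting the Schubert stratification of $\mathcal{F}l(\mathbb{C}^n)$ with $\spr_\lambda$ (equivalently, running Spaltenstein's inductive argument that records, flag step by flag step, into which Jordan block the new basis vector falls) exhibits each nonempty piece as an affine cell, with cells in bijection with standard tableaux. A combinatorial dimension count shows that the maximal cells all have the same dimension $n(\lambda)$ and that their number is the number $f^\lambda$ of standard tableaux; this simultaneously yields $\dim \spr_\lambda = n(\lambda)$, equidimensionality, and the fact that $H^{2n(\lambda)}(\spr_\lambda)$ has dimension $f^\lambda$, spanned by the fundamental classes of the top-dimensional components.

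For (2) and (3), I would pass to the Springer resolution $\mu \colon \widetilde{\cN} \to \cN$ of the nilpotent cone, where $\widetilde{\cN} = \{(x,F_\bullet) : xF_i \subseteq F_i\}$ is identified with $T^*\mathcal{F}l(\mathbb{C}^n)$ and $\mu^{-1}(x) = \spr_x$. The map $\mu$ is proper and, by the estimate of (1), semismall, so $\mu_* \underline{\mathbb{C}}[\dim \widetilde{\cN}]$ is a semisimple perverse sheaf. The decisive point is the identification $\End(\mu_* \underline{\mathbb{C}}) \cong \mathbb{C}[S_n]$: over the regular semisimple locus the Grothendieck--Springer simultaneous resolution is an \'etale $S_n$-Galois cover, and semismallness lets one propagate this deck-transformation action across the discriminant, producing the $S_n$-action on every stalk $H^*(\spr_x)$. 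Since in type $A$ all relevant local systems are trivial, the decomposition theorem gives $\mu_* \underline{\mathbb{C}}[\dim] \cong \bigoplus_\lambda \mathrm{IC}(\overline{O_\lambda}) \otimes V_\lambda$; because the simple summands $\mathrm{IC}(\overline{O_\lambda})$ are pairwise non-isomorphic while $\End(\mu_*\underline{\mathbb{C}}) \cong \mathbb{C}[S_n]$ is semisimple, the multiplicity spaces $V_\lambda$ are pairwise non-isomorphic simple $S_n$-modules. Taking the stalk at $x \in O_\lambda$ and comparing top degrees identifies $V_\lambda \cong H^{2n(\lambda)}(\spr_x)$, which proves irreducibility (2). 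Finally, partitions of $n$ index both the nilpotent orbits and, since the resolution is semismall with every stratum relevant, each $\mathrm{IC}(\overline{O_\lambda})$ occurs with nonzero multiplicity; as there are exactly $p(n)$ orbits and exactly $p(n)$ simple $S_n$-modules, the assignment $\lambda \mapsto H^{2n(\lambda)}(\spr_\lambda)$ is forced to be a bijection (3).

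The main obstacle is the endomorphism-algebra computation $\End(\mu_*\underline{\mathbb{C}}) \cong \mathbb{C}[S_n]$, that is, the construction and precise normalization of the $S_n$-action: one must control the monodromy of the regular semisimple cover and verify that it extends without defect, which is exactly where semismallness enters. An equivalent route realizes $\mathbb{C}[S_n]$ as the top Borel--Moore homology $H_*^{\mathrm{BM}}(Z)$ of the Steinberg variety $Z = \widetilde{\cN} \times_{\cN} \widetilde{\cN}$ under convolution, acting on $H_*(\spr_x)$; the analogous crux there is to show that this convolution algebra equals $\mathbb{C}[S_n]$ rather than merely containing it. Everything else---the paving in (1) and the deductions of irreducibility and bijectivity in (2)--(3)---is then either combinatorial bookkeeping or a formal consequence of semisimplicity together with the count of partitions.
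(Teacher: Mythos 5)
This theorem is quoted in the paper as background, with a citation to Springer's original article; the paper contains no proof of it, so there is nothing internal to compare your argument against line by line. Your proposal is a correct outline of the now-standard proof, but it is a genuinely different route from Springer's original one: Springer worked with $\ell$-adic cohomology and point counts over finite fields (the ``trigonometric sums'' of his 1976 paper), whereas you use the later perverse-sheaf package of Borho--MacPherson and Lusztig (semismallness of the Springer resolution, the decomposition theorem, and $\End(\mu_*\underline{\mathbb{C}})\cong\mathbb{C}[S_n]$ via the Galois $S_n$-cover over the regular semisimple locus), with the Steinberg-variety convolution algebra as an equivalent formulation. What your approach buys is a uniform conceptual explanation of irreducibility and of the bijection (pairwise non-isomorphic $\mathrm{IC}$ summands force pairwise non-isomorphic simple multiplicity spaces, and in type $A$ every orbit is relevant with trivial local system); what it costs is reliance on BBD, which postdates the statement being proved. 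Two caveats. First, your description of part (1) conflates two decompositions: the Spaltenstein pieces indexed by standard tableaux are smooth, irreducible, and all of dimension $n(\lambda)$ --- which is exactly what gives equidimensionality and the count $f^\lambda$ of components --- but they are \emph{not} affine spaces in general (for $\lambda=(1^n)$ the unique piece is the whole flag variety), while the genuine affine paving obtained from Schubert cells has cells of varying dimensions; either decomposition suffices for (1), but they should not be merged into one ``paving by top-dimensional affine cells,'' which would wrongly force the cohomology to be concentrated in top degree. Second, you correctly identify the computation of the endomorphism algebra (equivalently, that the convolution algebra on the Steinberg variety is all of $\mathbb{C}[S_n]$ and not a proper subalgebra) as the crux, but you do not supply it; as a self-contained proof the proposal is therefore a sketch with the hardest step deferred, which is acceptable for a statement the paper itself only cites.
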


\subsection{Exotic Springer fibers}

Exotic Springer fibers arise as fibers of the desingularization of the exotic nilpotent cone (see~\cite{Kato06}). We first need to endow $\mathbb{C}^{2n}$ with a structure of a symplectic space. Fix a basis $(e_1,\ldots,e_n,f_1,\ldots,f_n)$ of $\mathbb{C}^{2n}$ and define a symplectic form $\omega$ by $\omega(e_i,f_j) = -\omega(f_j,e_i) = \delta_{i+j,n+1}$. The action of $\mathrm{Sp}(\mathbb{C}^{2n},\omega)$ on $\mathfrak{gl}_{2n}(\mathbb{C})$ by conjugation yields a decomposition $\mathfrak{gl}_{2n}(\mathbb{C}) = \mathfrak{sp}_{2n}(\mathbb{C})\oplus \mathcal{S}(\mathbb{C}^{2n})$ of $\mathrm{Sp}(\mathbb{C}^{2n},\omega)$-modules. Explicitly
\[
\mathcal{S}(\mathbb{C}^{2n}) = \{ x\in \mathfrak{gl}_{2n} \mid \forall v,w\in\mathbb{C}^{2n},\ \omega(xv,w)=\omega(v,xw)\}.
\]
The exotic Springer fiber $\espr_{x,v}$ associated with a nilpotent element $x\in \mathcal{S}(\mathbb{C}^{2n})$ and a vector $v\in\mathbb{C}^{2n}$ is the following subset of flags in $\mathbb{C}^{2n}$:
\begin{gather*}
    \espr_{x,v} = \{ F_{\bullet}\in\mathcal{F}l_{(1^n,n)}(\mathbb{C}^{2n}) \mid F_i\text{ is isotropic with respect to }\omega,\ xF_i\subseteq F_{i-1},\ v\in F_n\}. 
\end{gather*}

Once again, up to isomorphism, this algebraic variety only depends on the orbit of the pair $(x,v)$ under the action of $\mathrm{Sp}(\mathbb{C}^{2n},\omega)$. These orbits were determined in \cite{achar-henderson} and are indexed by bipartitions of $n$. Therefore, given a bipartition $(\lambda,\mu)$ of $n$, we will write $\espr_{\lambda,\mu}$ for the exotic Springer fiber instead of $\espr_{x,v}$ if the pair $(x,v)$ is in the orbit labeled by $(\lambda,\mu)$.

Similarly to the Springer correspondence in type $A$, we obtain a geometric construction of the irreducible complex representations of the Weyl group of type $C$.

\begin{thm}[\cite{Kato06}]
    The following hold:
    \begin{enumerate}
        \item The exotic Springer fiber $\espr_{\lambda,\mu}$ is equidimensional of dimension $n(\lambda)+\lvert\mu\rvert$. 

        \item There exists an irreducible action of the Weyl group of type $C_n$ on $H^{2(n(\lambda)+\lvert\mu\rvert)}(\espr_{\lambda,\mu})$.
        
        \item The map $(\lambda,\mu) \mapsto H^{2(n(\lambda)+\lvert\mu\rvert)}(\espr_{\lambda,\mu})$ is a bijection between bipartitions of $n$ and irreducible complex representations of the Weyl group of type $C_n$.
    \end{enumerate}
\end{thm}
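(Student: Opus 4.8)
The plan is to prove all three parts at once through the standard Springer machinery adapted to the exotic setting: a resolution of the exotic nilpotent cone, the decomposition theorem applied to a semismall map, and a Weyl-group action manufactured by convolution (or, equivalently, monodromy). First I would construct the \emph{exotic Springer resolution}. Writing $\mathcal{N}\subseteq\mathcal{S}(\mathbb{C}^{2n})$ for the nilpotent variety and $\mathfrak{N}=\mathbb{C}^{2n}\times\mathcal{N}$ for the exotic nilpotent cone, I set
\[
\widetilde{\mathfrak{N}} = \bigl\{ (v,x,F_\bullet) \;\bigm|\; F_\bullet\in\mathcal{F}l_{(1^n,n)}(\mathbb{C}^{2n})\ \text{isotropic},\ xF_i\subseteq F_{i-1},\ v\in F_n \bigr\},
\]
with projection $\mu\colon\widetilde{\mathfrak{N}}\to\mathfrak{N}$, $(v,x,F_\bullet)\mapsto(v,x)$. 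Since an isotropic flag is the same datum as a Borel of $\mathrm{Sp}(\mathbb{C}^{2n},\omega)$, the conditions $v\in F_n$ and $xF_i\subseteq F_{i-1}$ cut out a linear subspace in each fibre, so $\widetilde{\mathfrak{N}}$ is a vector bundle over the isotropic flag variety, hence smooth; $\mu$ is proper with $\mu^{-1}(x,v)=\espr_{x,v}$. I would check that $\mu$ is an isomorphism over the open dense orbit, so it is a resolution of $\mathfrak{N}$.

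For part (1), the dimension and equidimensionality, I would first produce an affine paving of each fibre $\espr_{\lambda,\mu}$ from the attracting cells (Bia\l{}ynicki--Birula decomposition) of a generic one-parameter subgroup of a maximal torus of $\mathrm{Sp}$, reducing the dimension count to the combinatorics of the pair $(\lambda,\mu)$ and yielding $n(\lambda)+\lvert\mu\rvert$. Equidimensionality then follows from semismallness of $\mu$: one shows that for each orbit $\mathcal{O}\subseteq\mathfrak{N}$ the inequality $2\dim\mu^{-1}(\mathrm{pt})\le\dim\mathfrak{N}-\dim\mathcal{O}$ holds with equality, forcing all fibres over a fixed orbit to be pure of the expected dimension. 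The orbit classification by bipartitions, already fixed in \cite{achar-henderson}, is what labels these fibres.

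The crux is part (2). Semismallness makes $\mu_*\underline{\mathbb{C}}_{\widetilde{\mathfrak{N}}}[\dim\widetilde{\mathfrak{N}}]$ a semisimple perverse sheaf, so the decomposition theorem expresses it as a sum of $\mathrm{IC}$-sheaves on orbit closures with multiplicity spaces. To endow these with an action of $W=W(C_n)$ I would give the Steinberg-type variety $\widetilde{\mathfrak{N}}\times_{\mathfrak{N}}\widetilde{\mathfrak{N}}$ its convolution algebra structure on $H^{\mathrm{BM}}_*$; following the Kazhdan--Lusztig--Ginzburg construction adapted to type $C$, this algebra surjects onto a graded affine Hecke algebra of type $C$ and hence onto $\mathbb{C}[W]$, and its action on $H_*(\espr_{\lambda,\mu})$ restricts in top degree to the desired Springer action. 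Equivalently, one extends $\mu$ to the exotic Grothendieck--Springer family over all of $\mathcal{S}(\mathbb{C}^{2n})\times\mathbb{C}^{2n}$ and takes the monodromy of the top direct image over the regular-semisimple locus, where the family is a $W$-Galois covering, and then specializes to $\mathfrak{N}$. Irreducibility of $H^{2(n(\lambda)+\lvert\mu\rvert)}(\espr_{\lambda,\mu})$ is then the statement that the $\mathrm{IC}$-multiplicity space at the open stratum of $\overline{\mathcal{O}_{\lambda,\mu}}$ is an irreducible $W$-module carrying the trivial local system.

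Finally, part (3). Injectivity of $(\lambda,\mu)\mapsto H^{2(n(\lambda)+\lvert\mu\rvert)}(\espr_{\lambda,\mu})$ follows because distinct orbits contribute distinct $\mathrm{IC}$-summands, hence pairwise non-isomorphic $W$-modules; for surjectivity I would count, using that by \cite{achar-henderson} the number of orbits equals the number of bipartitions of $n$, which is classically the number of irreducible complex representations of $W(C_n)$, so an injection between finite sets of equal cardinality is a bijection. The hard part will be the $W$-action in part (2): unlike a genuine group action on a space, this action is not geometric on the nose and must be produced from the convolution product (with its type-$C$ graded affine Hecke algebra) or from monodromy, and the genuinely exotic input is proving that only constant coefficient systems occur — no cuspidal data or nontrivial local systems intervene. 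This cleanliness is exactly what distinguishes the exotic correspondence from the ordinary type-$C$ Springer correspondence, where the component group forces extra data.
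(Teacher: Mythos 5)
You should first note that the paper does not prove this statement at all: it is quoted as background from \cite{Kato06} (with the orbit classification taken from \cite{achar-henderson}), so there is no internal proof to compare against. That said, your sketch is essentially the route taken in the literature: the exotic Springer resolution $\mu\colon\widetilde{\mathfrak{N}}\to\mathfrak{N}$, semismallness, the decomposition theorem, the identification of the top-degree convolution algebra of the exotic Steinberg variety with $\mathbb{C}[W(C_n)]$, and the count $\#\{\text{orbits}\}=\#\{\text{bipartitions of }n\}=\#\mathrm{Irr}(W(C_n))$ closing parts (2) and (3). The genuinely exotic inputs you single out --- connectedness of stabilizers, hence only trivial local systems and no cuspidal data --- are indeed the points that make the exotic correspondence cleaner than ordinary type $C$.

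One step as written does not work: equidimensionality of $\espr_{\lambda,\mu}$ does not follow from semismallness. Semismallness with the stratum relevant only says that the \emph{maximal} dimension of an irreducible component of the fibre equals $\tfrac{1}{2}\operatorname{codim}\mathcal{O}_{\lambda,\mu}$; it does not exclude components of smaller dimension, and an affine paving does not help here either, since a paving may well have cells of many different dimensions. For ordinary Springer fibres purity is Spaltenstein's theorem and requires a separate geometric argument; in the exotic case it is established by an explicit parametrization of the irreducible components (see \cite{NaRoSa}, and, for the one-row bipartitions relevant to this paper, the iterated $\mathbb{P}^1$-bundle description of \cite{SW22}). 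To repair your argument you would need to exhibit a coarser decomposition of the fibre into locally closed pieces, indexed by standard bitableaux of shape $(\lambda,\mu)$, all of dimension $n(\lambda)+\lvert\mu\rvert$, whose closures are exactly the components --- the Bia{\l}ynicki--Birula cells you invoke are a natural starting point but the equidimensionality of the relevant pieces must be proved, not inferred from semismallness. The remaining steps (the convolution or monodromy construction of the $W$-action, irreducibility of the multiplicity spaces, and the bijection in part (3)) are standard and sound at the level of a sketch.
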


\subsection{Definition of the \texorpdfstring{$\Delta$}{Delta}-Springer variety and basic results}\

Let $\lambda=(\lambda_1,\ldots,\lambda_s)$ be a partition of $n$, and let $m$ be a positive integer such that $0 \leq m \leq \lambda_s$. In terms of the Young diagram, we visualize $m$ as a cut line:
\[
\lambda =
\xy (0,0)*{
\tikzdiagc[scale=.5]{
\draw[thick] (-2,-2) -- (-2,2);
\draw[thick] (-2,2) -- (7,2);
\draw[thick] (7,2) -- (7,1);
\draw[thick] (7,1) -- (6,1);
\draw[thick] (-2,-2) -- (3,-2);
\draw[thick] (3,-2) -- (3,-1);
\draw[thick] (3,-1) -- (4,-1);
\draw[thick] (4,-1) -- (4,0);
\draw[thick] (4,0) -- (5,0);
\node at (5.45,.75) {$\iddots$};
\draw[thick] (1,-3) -- (1,3);
\draw[thick,decoration={brace, mirror, raise=.3em},decorate] (-2,-2) -- (1,-2) node [pos=0.5,anchor=north,yshift=-.5em] {$m$};
\node at (3,1) {$\lambda'$};
}}\endxy
\]
The part of the partition to the right of the cut line is denoted by $\lambda'$. Then $\lambda'=(\lambda_1-m,\ldots,\lambda_s-m)$ is a partition of $n'=n-ms$.

\begin{defn}\label{def:DeltaSpr}
Let $x\in \mathfrak{gl}_n(\mathbb{C})$ of Jordan type $\lambda$. We define the \emph{$\Delta$-Springer variety} by
\[
\dspr_{\lambda,m}=\{F_\bullet \in \mathcal Fl_{(1^{n'+m},m(s-1))}(\mathbb C^n)\mid xF_i \subseteq F_{i-1} \text{ for }i \leq n'+m\text{, and }\im x^m \subseteq F_{n'+m}\}.
\]
\end{defn}

Up to isomorphism, the algebraic variety $\dspr_{\lambda,m}$ depends only on the Jordan type of the nilpotent element $x$, see~\cite[Lemma 3.4]{GLW21}. Note that we have $\dspr_{\lambda,0}=\spr_{\lambda}$.

\begin{rem}
    The variety $\dspr_{\lambda,m}$ is the variety denoted by $Y_{n' + m, \lambda',s}$ in~\cite{GLW21}. This change of notation is justified by our comparison between Springer fibers, exotic Springer fibers and $\Delta$-Springer varieties when $\lambda$ is a two-row partition, see \autoref{sec:comparison}.
\end{rem}

Let $\mathcal T_{\lambda,m}$ be the set of partial fillings of the Young diagram of $\lambda$ with the labels $\{1,\ldots,n'+m\}$ (without repetition), such that the labels in each row are right justified and decrease from left to right, and the $i$th row contains at least $\lambda'_i$-many labels.

Let $\mathcal P(m,\lambda')$ be the set of all fillings of the Young diagram of $\lambda'$ with the labels $\{1,\ldots,n'+m\}$, such that the labels decrease from left to right along each row and down each column. Since $\lambda'$ is a partition of $n'$, we do not use all the possible labels in such a filling.

The following proposition summarizes some of the results from~\cite{GLW21}. 

\begin{thm} \label{thm:results_from_GLW}
The following hold. 
\begin{enumerate}
\item There exists an affine paving of $\dspr_{\lambda,m}$ whose cells are in bijection with the set $\mathcal T_{\lambda,m}$.
\item If $m=\lambda_s$, then there is a bijection between the irreducible components of $\dspr_{\lambda,m}$ and the set $\mathcal P(m,\lambda')$. If $m<\lambda_s$, then there is a bijection between the irreducible components of $\dspr_{\lambda,m}$ and those elements $S\in\mathcal P(m,\lambda')$ which satisfy the following condition: let $i_S\in\{1,\ldots,n'+m\}$ be the smallest number which does not appear in the filling $S$, then the bottom row of $\lambda'$ is filled up by a subset of the numbers $\{1,\ldots,i_S-1\}$.
\item The variety $\dspr_{\lambda,m}$ is equidimensional and its dimension equals 
\[
n(\tilde{\lambda'})+m(s-1),
\] 
where $\widetilde{\lambda'}$ is the conjugate of the partition $\lambda'$. 
\end{enumerate}
\end{thm}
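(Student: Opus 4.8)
The plan is to establish the three statements in the order (1), (3), (2), since the description of the irreducible components will rely on both the affine paving and the dimension count. First I would build the paving of (1) by peeling off the flag one step at a time, following the inductive fibration technique of Spaltenstein and Shimomura. Fix a Jordan basis of $x$ adapted to $\lambda$ and consider the projection $\rho\colon\dspr_{\lambda,m}\to\mathbb{P}(\ker x)$, $F_\bullet\mapsto F_1$; the condition $xF_1\subseteq F_0=\{0\}$ forces $F_1\subseteq\ker x$. Passing to the quotient $\mathbb{C}^n/F_1$, the induced nilpotent $\bar x$ together with the induced flag again satisfies the defining conditions of a $\Delta$-Springer variety, now on $\mathbb{C}^n/F_1$ with the parameters shifted by one, so the fibers of $\rho$ are themselves $\Delta$-Springer varieties. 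Stratifying $\mathbb{P}(\ker x)$ according to the Jordan type of $\bar x$ (equivalently, according to which Jordan string the line $F_1$ meets), each stratum is an affine space over which $\rho$ is a Zariski-locally trivial fibration; by induction on $n$ the fibers carry affine pavings, and assembling the products of base cells with fiber cells yields an affine paving of $\dspr_{\lambda,m}$. Recording the string chosen at each step produces exactly a partial filling in $\mathcal T_{\lambda,m}$: the requirements that labels be right-justified, decrease along each row, and that the $i$th row contain at least $\lambda'_i$-many labels encode, respectively, the nilpotency conditions and the extra condition $\im x^m\subseteq F_{n'+m}$ (whose image is spanned by the string elements lying to the right of the cut line). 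Alternatively, the same paving arises as the Bialynicki--Birula decomposition for a generic cocharacter of a maximal torus of the reductive part of $Z_{GL_n}(x)$, whose fixed points are the coordinate flags indexed by $\mathcal T_{\lambda,m}$; there the real work is precisely to check that the attracting cells are honest affine spaces, which is exactly what the fibration argument supplies.

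For (3), equidimensionality is cleanest to deduce from the fact, recalled in the introduction and established in \cite{GiGr23}, that $\dspr_{\lambda,m}$ is a generalized Springer fiber in the sense of Borho--MacPherson \cite{BoMc}: it is a fiber of a proper map from a smooth irreducible variety, and the fibers of such a map over a fixed nilpotent orbit are equidimensional. Alternatively, equidimensionality follows from the paving of (1) once one checks that every cell lies in the closure of a top-dimensional one. The value of the dimension is then computed combinatorially as the maximum over $\mathcal T_{\lambda,m}$ of the cell dimensions produced in (1); a direct bookkeeping of the free parameters appearing at each step of the fibration expresses this maximum as $n(\widetilde{\lambda'})+m(s-1)$, and one double-checks it against the extremal cases $m=0$ (where it reduces to $\dim\spr_\lambda$) and the two-row case (where it equals $k$ independently of $m$).

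For (2), I would combine (1) and (3) with the general principle that a projective variety admitting an affine paving and equidimensional of dimension $D$ has its irreducible components in bijection with the cells of dimension $D$: the closure of a top-dimensional cell is irreducible of dimension $D$, hence a component, and every component contains a dense top cell, since otherwise it would be covered by closures of lower cells and have dimension $<D$. It then remains to identify the top-dimensional fillings in $\mathcal T_{\lambda,m}$ and match them bijectively with $\mathcal P(m,\lambda')$. A filling giving a cell of maximal dimension is forced to fill the diagram of $\lambda'$ by a standard filling, recorded in $\mathcal P(m,\lambda')$; when $m=\lambda_s$ this matching is a bijection onto all of $\mathcal P(m,\lambda')$. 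When $m<\lambda_s$ the image condition is no longer automatically saturated, so only some fillings remain top-dimensional: these are exactly the $S$ for which, with $i_S$ the smallest missing label, some row of $\lambda'$ is already filled using only $\{1,\dots,i_S-1\}$, which is the combinatorial shadow of the inclusion $\im x^m\subseteq F_{n'+m}$ becoming an equality on the corresponding component.

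The main obstacle I anticipate is twofold, and both parts are combinatorial rather than geometric. First, verifying in (1) that each stratum-times-fiber piece is genuinely an affine space of the predicted dimension, so that the paving and its dimension function are fully under control. Second, the delicate identification in (2) of the top-dimensional cells with $\mathcal P(m,\lambda')$, in particular isolating the extra condition that selects which cells stay top-dimensional in the range $m<\lambda_s$; pinning down this condition, rather than establishing the paving or equidimensionality, is where I expect the argument to require the most care.
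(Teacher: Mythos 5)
This statement is not proved in the paper at all: it is explicitly presented as a summary of results imported from \cite{GLW21} (``The following proposition summarizes some of the results from~\cite{GLW21}''), so there is no in-paper argument to compare yours against. Judged on its own terms, your outline does follow the general strategy of the cited source --- an iterated Spaltenstein/Shimomura-type fibration to produce the affine paving indexed by $\mathcal T_{\lambda,m}$, and identification of the irreducible components with closures of top-dimensional cells --- and the general principle you invoke in (2) (components of an equidimensional pavable projective variety correspond to top cells) is sound. Your alternative route to equidimensionality via the Borho--MacPherson interpretation is legitimate but anachronistic relative to \cite{GLW21}; that interpretation comes from the later work \cite{GiGr23}, so the fallback via the paving is the one that matches the source.

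That said, as written this is a proof \emph{plan} rather than a proof, and you have deferred precisely the two steps that carry all the content. First, in (1) your inductive setup has a gap you do not flag: the fiber of $\rho\colon F_\bullet\mapsto F_1$ over a line $F_1\subseteq\ker x$ is a flag variety on $\mathbb{C}^n/F_1$ with flag type $(1^{n'+m-1},m(s-1))$, and for this to again be a $\Delta$-Springer variety of the same family you need the number of parts $s$ and the inequality $m\leq\lambda_s$ to survive the quotient; when $F_1$ sits at the end of the shortest Jordan string this can fail (the number of parts drops, or $\lambda_s-1<m$), so the induction must be run over a larger class of varieties or organized differently (e.g.\ by peeling off the flag from the top, or by intersecting with Schubert cells), which is a real choice and not a formality. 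Second, the entire combinatorial identification in (2) --- which fillings are top-dimensional, and why the $i_S$ condition is exactly the right one when $m<\lambda_s$ --- is asserted as ``the combinatorial shadow of $\im x^m\subseteq F_{n'+m}$ becoming an equality'' without any argument; you correctly identify this as the hardest point, but identifying a gap is not the same as closing it. Since the authors deliberately cite \cite{GLW21} rather than reprove these facts, the appropriate resolution here is either to cite that source or to carry out these two steps in full.
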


As for the usual and the exotic Springer fiber, there is an action of a Weyl group on the top degree cohomology of the $\Delta$-Springer variety. We refer to \cite{jp79} for the notion of the Specht module associated with a skew partition. The following is \cite{GLW21}.

\begin{thm} \label{thm:cohomology_results_from_GLW}
    There exists an action of the symmetric group $S_{n'+m}$ on $H^{2(n(\tilde{\lambda})+m(s-1))}(\dspr_{\lambda,m})$ which is isomorphic to the Specht module associated with the skew partition $\lambda / (m^{s-1})$.
\end{thm}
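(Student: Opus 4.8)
The plan is to construct the $S_{n'+m}$-action on the entire cohomology $H^*(\dspr_{\lambda,m})$ first, and only afterwards isolate the top-degree piece and identify it with the skew Specht module by a character computation. By the affine paving of \autoref{thm:results_from_GLW}(1), the cohomology is concentrated in even degrees and the classes of the cell closures form a homogeneous basis; in particular the top degree is the socle of the grading and has a basis indexed by the top-dimensional cells, whose number I would match against the count of standard Young tableaux of shape $\lambda/(m^{s-1})$. Thus the representation is well defined once the action is in hand, and the residual task is the identification of an explicit finite-dimensional $S_{n'+m}$-module.

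For the action itself, I would use the realization of $\dspr_{\lambda,m}$ as a generalized Springer fiber in the sense of Borho--MacPherson (as recorded in \cite{GiGr23}). This exhibits the variety as a fiber of a proper map $\pi\colon \widetilde{\mathcal X}\to\mathcal X$, and the decomposition theorem equips $R\pi_*\,\underline{\mathbb C}$ with a Springer $W$-action; restricting to the fiber over the relevant point realizes $W=S_{n'+m}$ on $H^*(\dspr_{\lambda,m})$. Equivalently, and more usefully for computation, one transports the action through the isomorphism of $H^*(\dspr_{\lambda,m})$ with the Haglund--Rhoades--Shimozono-type generalized coinvariant ring $R_{n'+m,\lambda',s}$, a quotient of $\mathbb C[x_1,\dots,x_{n'+m}]$ on which $S_{n'+m}$ permutes the Chern roots $x_1,\dots,x_{n'+m}$ of the tautological line quotients $F_i/F_{i-1}$. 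The two actions agree up to the standard sign twist, and verifying this compatibility is routine once the presentation is set up equivariantly.

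The identification with the skew Specht module is the heart of the argument. I would compute the graded Frobenius characteristic $\mathrm{grFrob}(R_{n'+m,\lambda',s};q)$ by the Hall--Littlewood and $\Delta$-operator machinery that governs these rings, and read off its top-degree coefficient. The expectation is that this coefficient is exactly the skew Schur function $s_{\lambda/(m^{s-1})}$, which is the Frobenius characteristic of the Specht module for $\lambda/(m^{s-1})$; since the Frobenius characteristic determines an $S_{n'+m}$-representation up to isomorphism, this concludes the proof. A more hands-on alternative, better matched to the combinatorial flavour of this paper, is to use \autoref{thm:results_from_GLW}(2) to index a basis of the top cohomology by the fillings recording the top-dimensional components, biject these with standard Young tableaux of skew shape $\lambda/(m^{s-1})$, and then verify that the $S_{n'+m}$-action on the corresponding fundamental classes matches the polytabloid description of the Specht module.

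The main obstacle is precisely this identification at the level of modules rather than dimensions: because $\lambda/(m^{s-1})$ is a genuinely skew shape, the Specht module is in general reducible, so one cannot finish by a dimension count against a single known irreducible, as one does for ordinary two-row Springer fibers. One genuinely needs either the full graded Frobenius characteristic computation or a Specht-compatible geometric basis. A secondary but real difficulty is pinning down the sign convention relating the geometric Springer action to the naive permutation of Chern roots; this twist must be tracked carefully so that the final answer is $s_{\lambda/(m^{s-1})}$ rather than the characteristic of a conjugate shape.
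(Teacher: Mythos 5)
This statement is not proved in the paper at all: it is imported verbatim from the literature (the text introducing it reads ``The following is \cite{GLW21}''), so there is no internal argument to compare yours against. Judged against the cited source, your outline does reconstruct the actual route taken there: the cohomology ring of $\dspr_{\lambda,m}$ (denoted $Y_{n'+m,\lambda',s}$ in \cite{GLW21}) is presented as the generalized Haglund--Rhoades--Shimozono coinvariant ring $R_{n'+m,\lambda',s}$, the $S_{n'+m}$-action is the permutation of the variables (equivalently of the Chern roots of the line bundles $F_i/F_{i-1}$), and the top-degree piece is identified with the skew Specht module via the graded Frobenius characteristic. The alternative construction of the action via the Borho--MacPherson generalized Springer correspondence, which you mention through \cite{GiGr23}, is also consistent with how this paper contextualizes the result.

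That said, as a standalone proof your proposal has a genuine gap, and you have located it yourself: the entire content of the theorem is the claim that the top-degree coefficient of $\mathrm{grFrob}(R_{n'+m,\lambda',s};q)$ equals $s_{\lambda/(m^{s-1})}$, and your text records this only as ``the expectation is that this coefficient is exactly the skew Schur function.'' Nothing in the sketch actually produces that identity; neither the dimension count against standard Young tableaux of shape $\lambda/(m^{s-1})$ (which, as you note, cannot distinguish the skew Specht module from other modules of the same dimension, since the module is reducible) nor the existence of the $S_{n'+m}$-action gets you there. To close the gap one needs either the Hall--Littlewood/Delta-operator computation of the full graded Frobenius characteristic carried out in \cite{GLW21}, or a basis of the top cohomology indexed by standard fillings on which the action is verified to satisfy the Garnir/polytabloid relations. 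Within the present paper, the latter is exactly what is done later, but only in the two-row case: \autoref{prop:action_symmetric} gives the skein-theoretic action on a cup-diagram basis and \autoref{prop:action_symmetric_hom} identifies every homological degree (not just the top) with $V_{(n-d,d)/(m)}$ via a filtration argument and Pieri's rule. So your ``hands-on alternative'' is viable, but only after the diagrammatic combinatorics of \autoref{sec:top-model-action-symmetric} is in place, and it does not cover general $\lambda$.
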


In contrast with the (exotic) Springer correspondence, the top degree cohomology is not an irreducible representation of the symmetric group $S_{n'+m}$.

\subsection{Special Case: two-row partitions}

In this subsection, we restrict ourselves to the case of $\Delta$-Springer varieties associated with two-row partitions. We fix $\lambda=(n-k,k)$ a two-row partition of $n$. In particular, we have $0\leq k \leq \lfloor n/2\rfloor$. 

\begin{defn}\label{def:DeltaCup}
Fix a horizontal line with $n=(n-m)+m$ vertices labeled by the numbers $1,\ldots,n$ in increasing order from left to right. A \emph{cup diagram} is obtained by either connecting two vertices by a \emph{cup}, or by attaching a vertical \emph{ray} to a given vertex. We require that the resulting diagram is crossingless and that every vertex is connected to exactly one endpoint of a cup or ray. We use the notation $i\CupConnect j$ to indicate that vertices $i<j$ are connected by a cup. Moreover, we write $\RayConnect$ if vertex $i$ is connected to a ray. 

The set of all cup diagrams on $n$ vertices with $k$ cups such that the $m$ rightmost vertices labeled by $\{n-m+1,\ldots,n\}$ are connected to rays or to right endpoints of cups only is denoted by $\mathbb{B}_{n-k,k,m}$. The diagrams in $\mathbb B_{n-k,k,m}$ are called \emph{$\Delta$-cup diagrams}.
\end{defn}

\begin{rem}
    \autoref{def:DeltaCup} makes sense for any nonnegative integers $n,k,m$ such that $0\leq k \leq \lfloor n/2 \rfloor$ and $0\leq m \leq n$, but the $\Delta$-Springer variety is only defined when $0\leq m \leq k$. We will need the more general diagrams in the proof of \autoref{prop:K_a_contained}.
\end{rem}

\begin{exe}
   Below we give examples of $\Delta$-cup diagrams for $n=7$, $m=2$. We use a vertical dashed line to indicate the $m$ rightmost vertices.  
\[
\xy (0,0)*{
\begin{tikzpicture}[scale=0.6]
\draw[thick,densely dotted] (-1.5,1) to (5.5,1);
\draw[black,fill=black] (-1,1) circle (2.0pt);
\draw[black,fill=black] (0,1) circle (2.0pt);
\draw[black,fill=black] (1,1) circle (2.0pt);
\draw[black,fill=black] (2,1) circle (2.0pt);
\draw[black,fill=black] (3,1) circle (2.0pt);
\draw[black,fill=black] (4,1) circle (2.0pt);
\draw[black,fill=black] (5,1) circle (2.0pt);
\draw[very thick] (-1,1) to[out=-80,in=180] (-.5,.5) to[out=0,in=-100] (0,1);
\draw[very thick] (1,1) to[out=-80,in=180] (2.5,-.1) to[out=0,in=-100] (4,1);
\draw[very thick] (2,1) to[out=-80,in=180] (2.5,.5) to[out=0,in=-100] (3,1);
\draw[very thick] (5,-1) to (5,1);
\draw[cutline] (3.5,-1) to (3.5,1.15);
\end{tikzpicture}
}\endxy
\mspace{50mu}
\xy (0,0)*{
\begin{tikzpicture}[scale=0.6]
\draw[thick,densely dotted] (-1.5,1) to (5.5,1);
\draw[black,fill=black] (-1,1) circle (2.0pt);
\draw[black,fill=black] (0,1) circle (2.0pt);
\draw[black,fill=black] (1,1) circle (2.0pt);
\draw[black,fill=black] (2,1) circle (2.0pt);
\draw[black,fill=black] (3,1) circle (2.0pt);
\draw[black,fill=black] (4,1) circle (2.0pt);
\draw[black,fill=black] (5,1) circle (2.0pt);
\draw[very thick] (0,1) to[out=-80,in=180] (2.5,-.8) to[out=0,in=-100] (5,1);
\draw[very thick] (2,1) to[out=-80,in=180] (2.5,.5) to[out=0,in=-100] (3,1);
\draw[very thick] (1,1) to[out=-80,in=180] (2.5,-.1) to[out=0,in=-100] (4,1);
\draw[very thick] (-1,-1) to (-1,1);
\draw[cutline] (3.5,-1) to (3.5,1.15);
\end{tikzpicture}
}\endxy
\mspace{50mu}
\xy (0,0)*{
\begin{tikzpicture}[scale=0.6]
\draw[thick,densely dotted] (-1.5,1) to (5.5,1);
\draw[black,fill=black] (-1,1) circle (2.0pt);
\draw[black,fill=black] (0,1) circle (2.0pt);
\draw[black,fill=black] (1,1) circle (2.0pt);
\draw[black,fill=black] (2,1) circle (2.0pt);
\draw[black,fill=black] (3,1) circle (2.0pt);
\draw[black,fill=black] (4,1) circle (2.0pt);
\draw[black,fill=black] (5,1) circle (2.0pt);
\draw[very thick] (0,1) to[out=-80,in=180] (0.5,.5) to[out=0,in=-100] (1,1);
\draw[very thick] (3,1) to[out=-80,in=180] (3.5,.5) to[out=0,in=-100] (4,1);
\draw[very thick] (-1,-1) to (-1,1);
\draw[very thick] (2,-1) to (2,1);
\draw[very thick] (5,-1) to (5,1);
\draw[cutline] (3.5,-1) to (3.5,1.15);
\end{tikzpicture}
}\endxy
\]
The first two diagrams are elements of $\mathbb{B}_{4,3,2}$ and the last one an element of $\mathbb{B}_{5,2,2}$. Note that 
\[
\xy (0,0)*{
\begin{tikzpicture}[scale=0.6]
\draw[thick,densely dotted] (-1.5,1) to (5.5,1);
\draw[black,fill=black] (-1,1) circle (2.0pt);
\draw[black,fill=black] (0,1) circle (2.0pt);
\draw[black,fill=black] (1,1) circle (2.0pt);
\draw[black,fill=black] (2,1) circle (2.0pt);
\draw[black,fill=black] (3,1) circle (2.0pt);
\draw[black,fill=black] (4,1) circle (2.0pt);
\draw[black,fill=black] (5,1) circle (2.0pt);
\draw[very thick] (0,1) to[out=-80,in=180] (0.5,.5) to[out=0,in=-100] (1,1);
\draw[very thick] (4,1) to[out=-80,in=180] (4.5,.5) to[out=0,in=-100] (5,1);
\draw[very thick] (-1,-1) to (-1,1);
\draw[very thick] (2,-1) to (2,1);
\draw[very thick] (3,-1) to (3,1);
\draw[cutline] (3.5,-1) to (3.5,1.15);
\end{tikzpicture}
}\endxy
\]
is not a $\Delta$-cup diagram (the second rightmost vertex is not connected to a ray nor is it the right endpoint of a cup).
\end{exe}

\begin{lem}\label{lem:bijection_GLW_cups}
Let $0\leq m \leq k$. There exists a bijection between the irreducible components of $\dspr_{(n-k,k),m}$ and the set $\mathbb{B}_{n-k,k,m}$.   
\end{lem}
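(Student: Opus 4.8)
The plan is to reduce the statement to a purely combinatorial bijection via \autoref{thm:results_from_GLW}(2), and then to write that bijection down explicitly. Set $\lambda=(n-k,k)$, so $s=2$, $\lambda'=(n-k-m,k-m)$, and fillings use labels in $\{1,\dots,n-m\}$. Let $\mathcal I\subseteq\mathcal P(m,\lambda')$ be the set indexing the irreducible components: all of $\mathcal P(m,\lambda')$ when $m=k$ (where $\lambda'$ is a single row), and the subset carved out by the condition of \autoref{thm:results_from_GLW}(2) when $m<k$. The first step is to simplify that condition in the two-row case. Writing a filling $S$ with top row $a_1>\dots>a_{n-k-m}$ and bottom row $b_1>\dots>b_{k-m}$, column-strictness ($a_1>b_1$) forces $a_1$ to be the largest entry of the whole filling, so ``top row $\subseteq\{1,\dots,i_S-1\}$'' already implies ``bottom row $\subseteq\{1,\dots,i_S-1\}$''; hence $\mathcal I=\{S: b_1<i_S\}$, i.e. the bottom row lies inside the initial gapless run $\{1,\dots,i_S-1\}$.

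Next I would write down the bijection $\Phi\colon\mathbb B_{n-k,k,m}\to\mathcal I$. Given $C$, let $\ell_1<\dots<\ell_k$ be its cup left endpoints; the defining property of $\mathbb B_{n-k,k,m}$ guarantees $\ell_k\le n-m$. Put the bottom row of $\Phi(C)$ equal to $\{\ell_1,\dots,\ell_{k-m}\}$ and the top row equal to $\{1,\dots,n-m\}\setminus\{\ell_1,\dots,\ell_k\}$, each read in decreasing order; the omitted labels are then exactly the $m$ largest left endpoints $\ell_{k-m+1},\dots,\ell_k$. In particular $b_1=\ell_{k-m}<\ell_{k-m+1}=i_S$, so $\Phi(C)$ satisfies the condition defining $\mathcal I$ automatically. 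The candidate inverse sends $S\in\mathcal I$ to the diagram whose left-endpoint set is $\{b_1,\dots,b_{k-m}\}$ together with the $m$ omitted labels, completed by the forced greedy rule: scanning $1,\dots,n$ with a stack, push at a left endpoint, and at any other vertex record a right endpoint matched to the top of the stack if it is nonempty and a ray otherwise. Because every vertex's role is determined, this completion is unique, and the $\Delta$-condition holds since no left endpoint exceeds $n-m$. That the two maps are mutually inverse uses only $b_1<i_S$: this inequality is exactly what guarantees that resorting the left-endpoint set $\{b_1,\dots,b_{k-m}\}\cup\{\text{omitted}\}$ returns the bottom row as its $k-m$ smallest elements.

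The two remaining facts are the two directions of a single equivalence, and this is the main obstacle: \emph{the left-endpoint set $\{\ell_1<\dots<\ell_k\}\subseteq\{1,\dots,n-m\}$ completes to a (necessarily unique) crossingless $\Delta$-cup diagram if and only if the associated filling is column-strict, i.e. $a_j>b_j$ for $1\le j\le k-m$.} One direction shows $\Phi(C)$ is a genuine filling (crossinglessness $\Rightarrow$ column-strictness); the other shows the inverse lands in $\mathbb B_{n-k,k,m}$ (column-strictness $\Rightarrow$ the greedy completion matches every cup, so the stack empties at vertex $n$). I would prove it by reading off, for each position $p$, the statement ``the stack empties past $p$'' as ``there are at least as many non-left-endpoints to the right of $p$ as there are left endpoints in $[p,n]$,'' and matching these bookkeeping inequalities term by term with $a_j>b_j$; the key point making $k-m$ inequalities suffice is that the $m$ largest left endpoints are automatically matched into the $m$ vertices to the right of the cut, so only the bottom-row endpoints $\ell_1,\dots,\ell_{k-m}$ impose genuine constraints. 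As a sanity check, for $m=k$ the bottom row is empty and $\Phi$ collapses to the complementation bijection between $k$-subsets of $\{1,\dots,n-k\}$ (the left-endpoint sets, all completable) and the single-row fillings in $\mathcal P(k,\lambda')$, recovering the exotic one-row-bipartition picture.
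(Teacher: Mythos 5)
Your proposal is correct and follows essentially the same route as the paper: both reduce to \autoref{thm:results_from_GLW}(2) and then realize the bijection by declaring the left endpoints of cups to be the second-row entries of the filling together with the $m$ unused labels (the paper phrases this via the intermediate fillings of $\lambda/(m)$ and of $\lambda$). The only difference is one of detail: you make explicit the simplification of the indexing condition to $b_1<i_S$ and the equivalence between column-strictness and completability of a left-endpoint set to a crossingless $\Delta$-cup diagram, both of which the paper's proof asserts implicitly.
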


\begin{proof}
The irreducible components of $\dspr_{(n-k,k),m}$ are indexed by a subset of $\cP(m,\lambda')$ by \autoref{thm:results_from_GLW}. We remark that this subset of $\cP(m,\lambda')$ is in bijection with the set of fillings of the skew Young diagram of $\lambda/(m)$ with the labels in $\{1,\ldots,n-m\}$. Such a bijection is obtained by first keeping the labels of $\lambda'$ where they are and then completing the bottom row of $\lambda/(m)$ with the entries that are not in the filling of $\lambda'$.

We can now embed these fillings of $\lambda/(m)$ with the labels in $\{1,\ldots,n-m\}$ in the set of fillings of $\lambda$ with the labels in $\{1,\ldots,n\}$ by simply adding the entries $n,\ldots,n-m+1$ into the first $m$ boxes of the first row. The image of this embedding is exactly the set of fillings of $\lambda$ with the labels in $\{1,\ldots,n\}$ which are decreasing along rows and columns and such that the entries $n,\ldots,n-m+1$ are in the first row (the decreasing condition forces these entries to be in the first $m$ boxes).

Finally, to such a filling of $\lambda$ we associate the unique element of $\mathbb{B}_{n-k,k,m}$ such that the vertices connected to the left endpoints of cups are the entries of the second row. Thus we obtain a bijection between the irreducible components of $\dspr_{(n-k,k),m}$ and the set $\mathbb{B}_{n-k,k,m}$.
\end{proof}

\begin{exe}\label{ex:first_example}
Let us take $\lambda=(3,3)$ and $m=2$. The set of irreducible components is then parametrized by the following fillings of $\lambda'=(1,1)$ with entries in $\{1,2,3,4\}$
\[
\begin{ytableau}2\\1\end{ytableau},\begin{ytableau}3\\1\end{ytableau},\begin{ytableau}4\\1\end{ytableau}.
\]
The skew tableaux of shape $\lambda/(m)$ and the tableaux of shape $\lambda$ as obtained in the proof are
\[
\begin{ytableau}\none&\none&2\\4&3&1\end{ytableau},\begin{ytableau}\none&\none&3\\4&2&1\end{ytableau},\begin{ytableau}\none&\none&4\\3&2&1\end{ytableau}
\]
and
\[
\begin{ytableau}6&5&2\\4&3&1\end{ytableau},\begin{ytableau}6&5&3\\4&2&1\end{ytableau},\begin{ytableau}6&5&4\\3&2&1\end{ytableau}.
\]
Finally, the corresponding elements of $\mathbb{B}_{3,3,2}$ are 
\begin{equation*}
\ba = \xy (0,0)*{
\begin{tikzpicture}[scale=0.65]
\draw[thick,densely dotted] (-.5,1) to (5.5,1);
\draw[black,fill=black] (0,1) circle (2.0pt);
\draw[black,fill=black] (1,1) circle (2.0pt);
\draw[black,fill=black] (2,1) circle (2.0pt);
\draw[black,fill=black] (3,1) circle (2.0pt);
\draw[black,fill=black] (4,1) circle (2.0pt);
\draw[black,fill=black] (5,1) circle (2.0pt);
\draw[very thick] (0,1) to[out=-90,in=180] (.5,.5) to[out=0,in=-90] (1,1);
\draw[very thick] (2,1) to[out=-80,in=180] (3.5,-.3) to[out=0,in=-100] (5,1);
\draw[very thick] (3,1) to[out=-90,in=180] (3.5,.5) to[out=0,in=-90] (4,1);
\draw[cutline] (3.5,-1) to (3.5,1.15);
\end{tikzpicture}
}\endxy
\mspace{20mu}
\mathbf{b} =\xy (0,0)*{
\begin{tikzpicture}[scale=0.65]
\draw[thick,densely dotted] (-.5,1) to (5.5,1);
\draw[black,fill=black] (0,1) circle (2.0pt);
\draw[black,fill=black] (1,1) circle (2.0pt);
\draw[black,fill=black] (2,1) circle (2.0pt);
\draw[black,fill=black] (3,1) circle (2.0pt);
\draw[black,fill=black] (4,1) circle (2.0pt);
\draw[black,fill=black] (5,1) circle (2.0pt);
\draw[very thick] (1,1) to[out=-90,in=180] (1.5,.5) to[out=0,in=-90] (2,1);
\draw[very thick] (0,1) to[out=-75,in=180] (2.5,-.4) to[out=0,in=-105] (5,1);
\draw[very thick] (3,1) to[out=-90,in=180] (3.5,.5) to[out=0,in=-90] (4,1);
\draw[cutline] (3.5,-1) to (3.5,1.15);
\end{tikzpicture}
}\endxy
\mspace{20mu}
\mathbf{c} =\xy (0,0)*{
\begin{tikzpicture}[scale=0.65]
\draw[thick,densely dotted] (-.5,1) to (5.5,1);
\draw[black,fill=black] (0,1) circle (2.0pt);
\draw[black,fill=black] (1,1) circle (2.0pt);
\draw[black,fill=black] (2,1) circle (2.0pt);
\draw[black,fill=black] (3,1) circle (2.0pt);
\draw[black,fill=black] (4,1) circle (2.0pt);
\draw[black,fill=black] (5,1) circle (2.0pt);
\draw[very thick] (1,1) to[out=-80,in=180] (2.5,-.1) to[out=0,in=-100] (4,1);
\draw[very thick] (0,1) to[out=-75,in=180] (2.5,-.8) to[out=0,in=-105] (5,1);
\draw[very thick] (2,1) to[out=-90,in=180] (2.5,.5) to[out=0,in=-90] (3,1);
\draw[cutline] (3.5,-1) to (3.5,1.15);
\end{tikzpicture}
}\endxy
\end{equation*}
\end{exe}

\section{Irreducible components for two-row \texorpdfstring{$\Delta$}{Delta}-Springer varieties} \label{sec:structure_irred_comp}

As in the previous section, let $\lambda=(n-k,k)$ be a two-row partition of $n$ and $0 \leq m \leq k$. We also define $\lambda'=(n-k-m,k-m)$ which is a two-row partition of $n'=n-2m$.

\subsection{Embedding the \texorpdfstring{$\Delta$}{Delta}-Springer variety into the Cautis--Kamnitzer variety} \label{sec:embedded_Springer_fiber}

Fix a large integer $N>0$ (see \autoref{y_remark} for details on what is considered ``large'') and let $z\colon\mathbb C^{2N}\to\mathbb C^{2N}$ be a nilpotent linear endomorphism with two Jordan blocks of the same size. In particular, there exists a Jordan basis 
\begin{equation} \label{eq:Jordan_basis_of_z}
\begin{tikzpicture}[baseline={(0,0)}]
\node (e1) at (0,0) {$e_1$};
\node (e2) at (1,0) {$e_2$};
\node (space1) at (2,0) {$\ldots{}^{}$};
\node (eN) at (3.1,0) {$e_N$};
\node (f1) at (5,0) {$f_1$};
\node (f2) at (6,0) {$f_2$};
\node (space2) at (7,0) {$\ldots{}^{}$};
\node (fN) at (8.1,0) {$f_N$};
\path[->,font=\scriptsize,>=angle 90,bend right]
(e2) edge (e1)
(space1) edge (e2)
(eN) edge (space1)
(f2) edge (f1)
(space2) edge (f2)
(fN) edge (space2);
\end{tikzpicture}
\end{equation}
of $\mathbb C^{2N}$ where the action of $z$ is indicated by the arrows (the vectors $e_1$ and $f_1$ are sent to zero). 

In~\cite[\S2]{CK08}, Cautis--Kamnitzer define a smooth, projective variety given by
\begin{equation} \label{eq:Y_i}
Y_{n}:=\{F_{\bullet}\in \mathcal{F}l_{(1^n,2N-n)}(\mathbb{C}^{2N})\mid zF_{i}\subseteq F_{i-1}\},
\end{equation}
which will play an important role for our results.

\begin{rem} \label{y_remark}
Note that the inclusions $zF_i\subseteq F_{i-1}$ imply that
\begin{displaymath}
F_n \subseteq z^{-1}F_{n-1} \subseteq\ldots\subseteq z^{-n}(0) = \spn(e_1,\ldots,e_n,f_1,\ldots,f_n). 
\end{displaymath}
Hence, the variety $Y_n$ does not depend on the choice of $N$ as long as $N \geq n$. In particular, we can always assume (by making $N$ larger, if necessary) that all the vector spaces of a flag in $Y_n$ are contained in the image of $z$. 
\end{rem}

Define $E_{n-k,k}\subseteq\mathbb C^{2N}$ to be the subspace spanned by 
\[
e_1,\ldots,e_{n-k},f_1,\ldots,f_{k}.
\]

Then we can view the $\Delta$-Springer variety $\dspr_{(n-k,k),m}$ as a subvariety of $Y_{n-m}$ via the following identification 
\begin{equation} \label{eq:delta_Y}
\dspr_{(n-k,k),m} \cong \left\{ F_{\bullet} \in Y_{n-m} \mid z^{m}\left(E_{n-k,k}\right)\subseteq F_{n-m}\subseteq E_{n-k,k}\right\}.
\end{equation} 
By the $z$-invariance of the flags, the following observation is immediate:

\begin{lem}\label{lem:vectors_last}
    We have $\dspr_{(n-k,k),m}\cong\left\{ F_{\bullet} \in Y_{n-m} \mid F_{n-m}\subseteq E_{n-k,k}, e_{n-k-m},f_{k-m}\in F_{n-m}\right\}$.
\end{lem}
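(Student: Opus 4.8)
The plan is to combine the identification \eqref{eq:delta_Y} with the $z$-invariance built into $Y_{n-m}$. Since \eqref{eq:delta_Y} already presents $\dspr_{(n-k,k),m}$ as the set of $F_\bullet \in Y_{n-m}$ satisfying $z^m(E_{n-k,k}) \subseteq F_{n-m} \subseteq E_{n-k,k}$, and the condition $F_{n-m} \subseteq E_{n-k,k}$ is shared with the description in the lemma, it suffices to prove that for $F_\bullet \in Y_{n-m}$ the inclusion $z^m(E_{n-k,k}) \subseteq F_{n-m}$ is equivalent to the two memberships $e_{n-k-m}, f_{k-m} \in F_{n-m}$. Establishing this equivalence shows that the two descriptions cut out the same locus inside $Y_{n-m}$, which yields the asserted identification.

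First I would compute $z^m(E_{n-k,k})$ explicitly. From the Jordan structure \eqref{eq:Jordan_basis_of_z} one has $z^m(e_j)=e_{j-m}$ and $z^m(f_j)=f_{j-m}$ for $j>m$, while $z^m$ annihilates $e_j$ and $f_j$ for $j\leq m$. Since $E_{n-k,k}=\spn(e_1,\ldots,e_{n-k},f_1,\ldots,f_k)$, this gives
\[
z^m(E_{n-k,k}) = \spn(e_1,\ldots,e_{n-k-m},f_1,\ldots,f_{k-m}),
\]
so that $z^m(E_{n-k,k}) \subseteq F_{n-m}$ is precisely the statement that all of $e_1,\ldots,e_{n-k-m},f_1,\ldots,f_{k-m}$ lie in $F_{n-m}$.

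The crucial point is that $F_{n-m}$ is $z$-stable: the relations defining $Y_{n-m}$ give $zF_{n-m}\subseteq F_{n-m-1}\subseteq F_{n-m}$. Hence, once $e_{n-k-m}\in F_{n-m}$, applying $z$ repeatedly produces $e_{n-k-m-1}=z(e_{n-k-m})\in F_{n-m}$ and, inductively, $e_1,\ldots,e_{n-k-m}\in F_{n-m}$; starting from $f_{k-m}$ the same argument yields $f_1,\ldots,f_{k-m}\in F_{n-m}$. The reverse implication is immediate, since $e_{n-k-m}$ and $f_{k-m}$ are themselves among the spanning vectors of $z^m(E_{n-k,k})$. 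This is the entire content of the lemma. I do not anticipate a genuine obstacle: the only thing requiring care is the index bookkeeping in identifying $z^m(E_{n-k,k})$ — including the degenerate cases $n-k-m=0$ or $k-m=0$ (occurring when $m=k$), where the corresponding membership condition is simply vacuous — together with the elementary but essential use of $z$-stability to collapse each Jordan string to its top vector.
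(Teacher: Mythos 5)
Your argument is correct and is exactly the fleshed-out version of what the paper asserts: the paper dispenses with this lemma in one line ("By the $z$-invariance of the flags, the following observation is immediate"), and your computation of $z^m(E_{n-k,k})=\spn(e_1,\ldots,e_{n-k-m},f_1,\ldots,f_{k-m})$ together with the $z$-stability of $F_{n-m}$ is precisely the intended justification.
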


\subsection{Explicit description of the irreducible components}

For the remainder of this article, we will write $\dspr_{(n-k,k),m}$ to denote the embedded $\Delta$-Springer variety via the identification~\eqref{eq:delta_Y}. The following subvarieties will describe irreducible components of the $\Delta$-Springer variety $\dspr_{(n-k,k),m}$.

\begin{defn}\label{def:Ka}
Let $\ba\in\mathbb B_{n-k,k,m}$. We define $K_\ba\subseteq Y_{n-m}$ to be the subvariety of $Y_{n-m}$ consisting of all flags $(F_1,\ldots,F_{n-m})$ satisfying the following conditions imposed by the $\Delta$-cup diagram $\ba$:
\begin{enumerate}[(i)]
\item If $i \CupConnect j$, $i,j\in\{1,\ldots,n-m\}$, then
\[
F_j=z^{-\frac{1}{2}(j-i+1)}F_{i-1}.
\]
\item If $\RayConnect$, $i\in\{1,\ldots,n-m\}$, then
\[
F_i=F_{i-1}+\spn\Bigl(e_{\frac{1}{2}\bigl(i+\rho_\ba(i)\bigr)}\Bigr).
\]
Here, $\rho_\ba(i)$ is the number of rays to the left of vertex $i$ (including the vertex $i$) in $\ba$. 
\end{enumerate}
There are no relations for a vector space indexed by a vertex connected to a cup whose right endpoint is connected to a vertex in $\{n-m+1,\ldots,n\}$.
\end{defn}

\begin{thm}\label{thm:algebraic_main_result}
Let $0\leq m \leq k$. The following statements hold:
\begin{enumerate}[(a)]
\item\label{second_part_thm} The subvariety $K_\ba\subseteq Y_{n-m}$ is an irreducible component of the $\Delta$-Springer variety $\dspr_{(n-k,k),m}\subseteq Y_{n-m}$.
\item\label{third_part_thm} The irreducible component $K_\ba\subseteq \dspr_{(n-k,k),m}$ is a $k$-fold iterated fiber bundle over $\mathbb P^1$: there exist spaces $K_\ba=X_1,X_2,\ldots,X_{k},X_{k+1}=\mathrm{pt}$ together with maps $p_1,p_2,\ldots,p_{k}$ such that $p_j\colon X_j\to \mathbb P^1$ is a fiber bundle with typical fiber $X_{j+1}$. In particular, the irreducible component $K_\ba$ is smooth.
\item\label{first_part_thm} The map $\ba\mapsto K_\ba$ defines a bijection between the $\Delta$-cup diagrams in $\mathbb{B}_{n-k,k,m}$ and the irreducible components of $\dspr_{(n-k,k),m}$.
\end{enumerate}
\end{thm}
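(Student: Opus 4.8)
The plan is to establish the three parts in the order (b), (a), (c): first describe the geometry of each $K_\ba$ as an iterated $\mathbb{P}^1$-bundle, then deduce that it is an irreducible component by a dimension count, and finally promote $\ba \mapsto K_\ba$ to a bijection using the enumeration of components from \autoref{lem:bijection_GLW_cups}. The prerequisite for everything is the containment $K_\ba \subseteq \dspr_{(n-k,k),m}$, that is, that every flag obeying the relations of \autoref{def:Ka} actually lies in the embedded $\Delta$-Springer variety. Via \autoref{lem:vectors_last} this reduces to showing that the cup and ray relations force $F_{n-m} \subseteq E_{n-k,k}$ together with $e_{n-k-m}, f_{k-m} \in F_{n-m}$. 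I expect this to be the main obstacle, since it is the only point where the $\Delta$-feature of the diagram — a cup whose right endpoint lies to the right of the cut line, and which therefore imposes no relation on its left endpoint — genuinely intervenes. I would isolate it as \autoref{prop:K_a_contained} and prove it by induction, deleting an outermost cup or a ray at a time; as flagged in the Remark after \autoref{def:DeltaCup}, the more general cup diagrams are exactly what is needed for such a recursion to stay within the class under consideration.

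For part (b) I would argue by induction on the number $k$ of cups that $K_\ba$ is a $k$-fold iterated $\mathbb{P}^1$-bundle. When $k=0$ the diagram is all rays and relation (ii) of \autoref{def:Ka} pins down the entire flag, so $K_\ba$ is a point. When $k \geq 1$, let $l$ be the smallest left endpoint of a cup; crossinglessness forces every earlier vertex to be a ray, so $F_0 \subseteq \cdots \subseteq F_{l-1}$ are rigidly determined by (ii). After enlarging $N$ we may assume by \autoref{y_remark} that all spaces lie in $\im z$, whence $z^{-1}F_{l-1}/F_{l-1} \cong \mathbb{C}^2$ and the line $F_l/F_{l-1}$ is a free point of the fixed projective line $\mathbb{P}(z^{-1}F_{l-1}/F_{l-1})$, with $zF_l \subseteq F_{l-1}$ automatic. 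This gives a surjection $p_1 \colon X_1 = K_\ba \to \mathbb{P}^1$; fixing $F_l$ identifies the fibre with the component variety $X_2$ attached to the $(k-1)$-cup diagram obtained by resolving the leftmost cup, which by induction is a $(k-1)$-fold iterated $\mathbb{P}^1$-bundle. Local triviality of $p_1$ is checked as in the Cautis--Kamnitzer framework, the defining choices being algebraic and the fibres mutually isomorphic. The resulting tower $X_1, \ldots, X_{k+1} = \mathrm{pt}$ shows in particular that $K_\ba$ is smooth and irreducible of dimension $k$.

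Part (a) then follows from a dimension count: by \autoref{thm:results_from_GLW} with $s=2$ and $\lambda' = (n-k-m,k-m)$, so that $\widetilde{\lambda'} = (2^{k-m},1^{n-2k})$, the variety $\dspr_{(n-k,k),m}$ is equidimensional of dimension $n(\widetilde{\lambda'}) + m(s-1) = (k-m)+m = k$, and an irreducible closed subvariety of dimension $k$ inside it must be an irreducible component. For part (c), the map $\ba \mapsto K_\ba$ sends $\mathbb{B}_{n-k,k,m}$ to components; I would prove injectivity by reading $\ba$ back off $K_\ba$. The relations $F_j = z^{-\frac12(j-i+1)}F_{i-1}$ and $F_i = F_{i-1}+\spn(e_\bullet)$ that hold identically on $K_\ba$ recover, among the visible vertices $1,\ldots,n-m$, which are ray-vertices (fixed $e$-line), which are left endpoints (varying line), and which are right endpoints (determined), and hence the cups lying entirely to the left of the cut; the portion of $\ba$ to the right of the cut is then forced by crossinglessness together with the defining condition of $\mathbb{B}_{n-k,k,m}$ that the last $m$ vertices carry only rays and right cup-endpoints. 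Thus distinct diagrams yield distinct subvarieties, and since \autoref{lem:bijection_GLW_cups} gives exactly $\lvert\mathbb{B}_{n-k,k,m}\rvert$ components, this injection between finite sets of equal size is automatically a bijection. The one genuinely nonformal ingredient remains the containment of the first step; once it is in place, parts (b), (a) and (c) are respectively an induction peeling off one cup, a dimension comparison, and a counting argument.
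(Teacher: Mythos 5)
Your proposal is correct and follows essentially the same route as the paper: the containment is isolated as \autoref{prop:K_a_contained} and proved by an inductive cup-deletion argument, part~\ref{third_part_thm} is the iterated $\mathbb{P}^1$-bundle construction peeling off left cup-endpoints, part~\ref{second_part_thm} is the dimension comparison $\dim K_\ba = k = \dim\dspr_{(n-k,k),m}$ via \autoref{thm:results_from_GLW}, and part~\ref{first_part_thm} combines distinctness of the $K_\ba$ with the count from \autoref{lem:bijection_GLW_cups}. The only cosmetic difference is that the paper's induction in \autoref{prop:K_a_contained} removes a cup joining adjacent vertices $i$, $i+1$ (so that the morphism $q_{n-m}^i$ applies) rather than an outermost cup, but this does not change the argument.
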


\begin{rem}
We could replace $z$ by the restriction $z_{\lambda}$ of $z$ to $E_{n-k,k}$ in \autoref{def:Ka}, which is justified by \autoref{prop:K_a_contained}. Hence, the description of the irreducible components of the $\Delta$-Springer variety in \autoref{thm:algebraic_main_result}\ref{second_part_thm} also makes sense without the embedding into $Y_{n-m}$.
\end{rem}

For the proof of \autoref{thm:algebraic_main_result} we consider the subvariety $X^i_{n-m} \subseteq Y_{n-m}$, $1\leq i < n-m$, defined by 
\begin{equation}\label{eq:X_i}
X^i_{n-m}:=\{F_{\bullet} \in Y_{n-m} \mid F_{i+1}=z^{-1}F_{i-1}\},
\end{equation}
and the surjective morphism of varieties $q_{n-m}^i \colon X^i_{n-m} \twoheadrightarrow Y_{n-m-2}$ given by
\begin{equation}\label{eq:q_i_morphism}
(F_1,\ldots,F_{n-m}) \mapsto \left(F_1,\ldots,F_{i-1},zF_{i+2},\ldots,zF_{n-m}\right),
\end{equation}
see also~\cite[\S2]{CK08}.

\begin{lem}\label{lem:induction_alg_components}
Let $\ba\in\mathbb{B}_{n-k,k,m}$ be a cup diagram with a cup connecting vertices $i$ and $i+1$ and let $\tilde{\ba}\in\mathbb{B}_{n-k-1,k-1,m}$ be the cup diagram obtained by deleting this cup. Then we have $K_\ba=(q_{n-m}^i)^{-1}(K_{\tilde{\ba}})$. 
\end{lem}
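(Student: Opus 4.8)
The plan is to run the entire comparison inside the intermediate variety $X^i_{n-m}$ of \eqref{eq:X_i}. Since $\ba$ has a cup joining the adjacent vertices $i$ and $i+1$, relation (i) of \autoref{def:Ka} reads $F_{i+1}=z^{-\frac12((i+1)-i+1)}F_{i-1}=z^{-1}F_{i-1}$, which is exactly the equation defining $X^i_{n-m}$. Thus $K_\ba\subseteq X^i_{n-m}$, while $(q_{n-m}^i)^{-1}(K_{\tilde{\ba}})\subseteq X^i_{n-m}$ holds by definition of the domain of $q_{n-m}^i$. It then suffices to show that a flag $F_\bullet\in X^i_{n-m}$ satisfies the remaining relations imposed by $\ba$ if and only if its image $G_\bullet=q_{n-m}^i(F_\bullet)\in Y_{n-m-2}$ satisfies the relations imposed by $\tilde{\ba}$.

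First I would set up a dictionary between $F_\bullet$ and $G_\bullet$. From \eqref{eq:q_i_morphism} one has $F_j=G_j$ for $j\le i-1$ and $G_j=zF_{j+2}$ for $i\le j\le n-m-2$. The $X^i_{n-m}$-relation forces $\ker z\subseteq F_{i+1}\subseteq F_j$ for every $j\ge i+1$, so $z^{-1}(zF_j)=F_j+\ker z=F_j$ and hence $F_j=z^{-1}G_{j-2}$ for all $j\ge i+1$; the remaining space $F_i$ carries no relation in \autoref{def:Ka} (it is the left endpoint of the deleted cup) and is precisely the $\mathbb{P}^1$ of choices in the fibre of $q_{n-m}^i$. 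In parallel I would record the index bijection between the cups and rays of $\tilde{\ba}$ and those of $\ba$ other than $\{i,i+1\}$: labels $<i$ are fixed and labels $>i+1$ decrease by $2$; the ray-count satisfies $\rho_{\tilde{\ba}}=\rho_{\ba}$ under this relabelling because the deleted cup contains no ray; and the half-length $c=\tfrac12(b-a+1)$ of a cup $a\CupConnect b$ is preserved, except when the cup nests over $\{i,i+1\}$, where it drops to $c-1$.

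The core of the proof is a case-by-case check under this dictionary, obtained by applying $z$. For a ray at vertex $j\ge i+2$, set $p=\tfrac12(j+\rho_\ba(j))$; using $ze_p=e_{p-1}$ from \eqref{eq:Jordan_basis_of_z}, the relation $F_j=F_{j-1}+\spn(e_p)$ becomes $G_{j-2}=G_{j-3}+\spn(e_{p-1})$, which is the $\tilde{\ba}$-relation at $j-2$ since $p-1=\tfrac12((j-2)+\rho_{\tilde{\ba}}(j-2))$; a ray left of $i$ is immediate from $F_j=G_j$. For a cup $a\CupConnect b$ there are three configurations. If it lies entirely left of $i$, the relation is unchanged. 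If it lies entirely right of $i+1$, both endpoint and base acquire one factor $z^{-1}$ through $F_b=z^{-1}G_{b-2}$ and $F_{a-1}=z^{-1}G_{a-3}$, which cancel and keep the exponent $c$. If it nests over $\{i,i+1\}$, the base is unshifted ($F_{a-1}=G_{a-1}$) while the endpoint obeys $F_b=z^{-1}G_{b-2}$, so the single surviving $z^{-1}$ is exactly what converts $z^{-c}$ into $z^{-(c-1)}$, matching the drop $c\mapsto c-1$ noted above.

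The step I expect to be the main obstacle is promoting these one-directional computations to genuine equivalences, that is, reversing the application of $z$. The tool is the identity $z^{-1}P=z^{-1}Q\iff P\cap\im z=Q\cap\im z$, which collapses to $P=Q$ once $P,Q\subseteq\im z$. This is where \autoref{y_remark} is indispensable: after enlarging $N$ every space of every flag in $Y_{n-m}$ and $Y_{n-m-2}$ lies in $\im z$, and since $\ker z^c=\spn(e_1,\dots,e_c,f_1,\dots,f_c)\subseteq\im z$ the preimages $z^{-c}G$ appearing above also lie in $\im z$. A dimension count such as $\dim z^{-c}G_{a-3}=\dim G_{a-3}+2c=b-2=\dim G_{b-2}$ confirms that the matched relations equate spaces of equal dimension, so the inclusions produced by applying $z$ (or $z^{-1}$) are forced to be equalities. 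Combining the reduction, the dictionary and the case check yields $K_\ba=(q_{n-m}^i)^{-1}(K_{\tilde{\ba}})$.
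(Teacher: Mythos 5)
Your proposal is correct and follows the same strategy as the paper: reduce to showing that a flag in $X^i_{n-m}$ satisfies the relations of \autoref{def:Ka} for $\ba$ if and only if its image under $q^i_{n-m}$ satisfies them for $\tilde{\ba}$, and then verify this relation by relation. The paper outsources that verification to \cite[Lemma 17]{SW22}, whereas you carry out the dictionary, the case analysis for rays and for the three cup configurations, and the reversibility argument explicitly; the details check out.
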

\begin{proof}
We have to show that a flag $(F_1,\ldots,F_{n-m})\in Y_{n-m}$ satisfies the conditions (i), (ii) from \autoref{def:Ka} with respect to the $\Delta$-cup diagram $\ba$ if and only if 
\[
q_{n-m}^i(F_1,\ldots,F_{n-m})=(F_1,\ldots,F_{i-1},zF_{i+2},\ldots,zF_{n-m})\in Y_{n-m-2}
\]
satisfies these conditions with respect to $\tilde{\ba}$. For a proof, we refer to~\cite[Lemma 17]{SW22}.
\end{proof}

\begin{prop}\label{prop:K_a_contained}
Let $\ba\in\mathbb{B}_{n-k,k,m}$ and $F_{\bullet}\in K_{\ba}$. Then $F_{n-m}\subseteq E_{n-k,k}$ and $e_{n-k-m},f_{k-m}$ belong to $F_{n-m}$. 
In particular, if $0\leq m \leq k$, we have $K_{\ba}\subseteq\dspr_{(n-k,k),m}$.
\end{prop}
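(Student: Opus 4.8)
The plan is to prove the equivalent statement that $z^{m}(E_{n-k,k})\subseteq F_{n-m}\subseteq E_{n-k,k}$ for every $F_{\bullet}\in K_{\ba}$. Indeed, since $zF_{n-m}\subseteq F_{n-m-1}\subseteq F_{n-m}$, the space $F_{n-m}$ is $z$-invariant, so $e_{n-k-m}\in F_{n-m}$ forces $e_{1},\dots,e_{n-k-m}\in F_{n-m}$, and likewise for the $f$'s; as $z^{m}(E_{n-k,k})=\spn(e_{1},\dots,e_{n-k-m},f_{1},\dots,f_{k-m})$, this reformulation is equivalent to the statement of the proposition by \autoref{lem:vectors_last}, and the final assertion $K_{\ba}\subseteq\dspr_{(n-k,k),m}$ then follows via \eqref{eq:delta_Y}. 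I would argue by induction on the number of cups $k$, working with arbitrary parameters $n,k,m$ as allowed by \autoref{def:DeltaCup} (the reduction below leaves the range $0\le m\le k$, which is exactly why the more general diagrams are needed).

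For the inductive step, the first point to record is that cups and rays are both drawn below the line of vertices, so crossinglessness prevents a ray from sitting strictly between the endpoints of a cup. Hence if $\ba$ has a cup with both endpoints in $\{1,\dots,n-m\}$, then the one of minimal width among these has no cup nested inside (by minimality) and no ray inside (by the previous remark), so it joins two adjacent vertices $i\CupConnect i+1$ with $i+1\le n-m$, and \autoref{lem:induction_alg_components} applies: with $\tilde{\ba}\in\mathbb{B}_{n-k-1,k-1,m}$ the diagram obtained by deleting this cup and $G_{\bullet}=q^{i}_{n-m}(F_{\bullet})\in K_{\tilde{\ba}}$, we have $G_{n-m-2}=zF_{n-m}$, and the induction hypothesis gives $z^{m}(E_{n-k-1,k-1})\subseteq zF_{n-m}\subseteq E_{n-k-1,k-1}$.

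From here both inclusions follow by applying $z^{-1}$. For the upper one I would use $F_{n-m}\subseteq z^{-1}(zF_{n-m})\subseteq z^{-1}(E_{n-k-1,k-1})=E_{n-k,k}$. For the lower one the key algebraic identity is $z\bigl(z^{m}(E_{n-k,k})\bigr)=z^{m}(E_{n-k-1,k-1})$ (both equal $\spn(e_{1},\dots,e_{n-k-m-1},f_{1},\dots,f_{k-m-1})$), so the hypothesis yields $z(z^{m}(E_{n-k,k}))\subseteq zF_{n-m}$ and therefore $z^{m}(E_{n-k,k})\subseteq F_{n-m}+\ker z$. The hard part is absorbing the slack $\ker z=\spn(e_{1},f_{1})$: because $G_{\bullet}$ is a genuine flag in $Y_{n-m-2}$ we have $\dim(zF_{n-m})=n-m-2$, whence $\dim(F_{n-m}\cap\ker z)=2$ and thus $\ker z\subseteq F_{n-m}$, giving $z^{m}(E_{n-k,k})\subseteq F_{n-m}$. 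This observation—that the Cautis--Kamnitzer reduction $q^{i}_{n-m}$ really lands in $Y_{n-m-2}$ and so pins down the kernel of $z$ on $F_{n-m}$—is the step I expect to be the crux; without it one only recovers the lower inclusion up to $\ker z$.

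Finally, the base case is when no cup lies entirely to the left of the cut line. Then every cup straddles it; being crossingless with no ray trapped inside, the $k$ cups are forced into the concentric configuration joining $i$ to $2(n-m)-i+1$ for $i=n-m-k+1,\dots,n-m$, which in particular requires $k\le m$. The vertices $1,\dots,n-m-k$ are rays and build $F_{n-m-k}=\spn(e_{1},\dots,e_{n-m-k})$, while the straddling cups impose no condition on $F_{n-m-k+1},\dots,F_{n-m}$. Iterating $zF_{i}\subseteq F_{i-1}$ gives $F_{n-m}\subseteq z^{-k}F_{n-m-k}=\spn(e_{1},\dots,e_{n-m},f_{1},\dots,f_{k})\subseteq E_{n-k,k}$ (using $k\le m$), while $z^{m}(E_{n-k,k})=\spn(e_{1},\dots,e_{n-k-m})=F_{n-m-k}\subseteq F_{n-m}$ gives the lower inclusion, completing the induction.
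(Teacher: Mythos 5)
Your proof is correct and follows essentially the same route as the paper: induction via removal of a minimal cup lying entirely left of the cut line using \autoref{lem:induction_alg_components}, with the same base case of concentric straddling cups and rays building $F_{n-k-m}=\mathrm{span}(e_1,\ldots,e_{n-k-m})$. The only difference is cosmetic: where the paper lifts the specific vectors $e_{n-k-m-1}$, $f_{k-m-1}$ through $z$ and corrects the lift by elements of $\mathrm{span}(e_1,f_1)\subseteq F_{i+1}\subseteq F_{n-m}$, you package the same key fact as $\ker z\subseteq F_{n-m}$ (via the dimension count coming from $q^i_{n-m}$ landing in $Y_{n-m-2}$) and use it to absorb the slack in $z^m(E_{n-k,k})\subseteq F_{n-m}+\ker z$.
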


\begin{proof}
We use the same argument as in the proof of \cite[Proposition 18]{SW22}: we proceed by induction on the number of cups in $\ba$ with both endpoints on the left of the cut line.

If there is no cup with both endpoints on the left of the cut line, then $m\geq k$ and
\begin{gather*}
    \ba =
\xy (0,2)*{
\begin{tikzpicture}[scale=0.65]
\draw[thick,densely dotted] (-.5,1) to (9.5,1);
\draw[black,fill=black] (0,1) circle (2.0pt);
\draw[black,fill=black] (1.5,1) circle (2.0pt);
\draw[black,fill=black] (2.5,1) circle (2.0pt);
\draw[black,fill=black] (4,1) circle (2.0pt);
\draw[black,fill=black] (5,1) circle (2.0pt);
\draw[black,fill=black] (6.5,1) circle (2.0pt);
\draw[black,fill=black] (7.5,1) circle (2.0pt);
\draw[black,fill=black] (9,1) circle (2.0pt);
\draw[very thick] (0,1) -- (0,-1);
\draw[very thick] (1.5,1) -- (1.5,-1);
\draw[very thick] (2.5,1) to[out=-90,in=180] (4.5,-.5) to[out=0,in=-90] (6.5,1);
\draw[very thick] (4,1) to[out=-90,in=180] (4.5,.5) to[out=0,in=-90] (5,1);
\draw[very thick] (7.5,1) -- (7.5,-1);
\draw[very thick] (9,1) -- (9,-1);
\draw[cutline] (4.5,-1) to (4.5,1.15);
\draw[thick,decoration={brace, raise=.3em},decorate] (2.4,1.1) -- (4.1,1.1) node [pos=0.5,anchor=north,yshift=2em] {$k$};
\draw[thick,decoration={brace, raise=.3em},decorate] (4.9,1.1) -- (9.1,1.1) node [pos=0.5,anchor=north,yshift=1.7em] {$m$};
\node at (.75,.7) {$\cdots$};
\node at (3.25,.7) {$\cdots$};
\node at (5.75,.7) {$\cdots$};
\node at (8.25,.7) {$\cdots$};
\end{tikzpicture}
}\endxy 
\end{gather*}
By convention, $f_{k-m}=0$ since $k\leq m$, so that $f_{k-m}\in F_{n-m}$. By definition of $K_{\ba}$, the flag $F_{\bullet}$ satisfies $F_i = \mathrm{span}(e_1,\ldots,e_i)$ for $1 \leq i \leq n-m-k$. Therefore, we have that $e_{n-k-m}\in F_{n-m}$ since $e_{n-k-m}\in F_{n-k-m}$ and $F_{n-k-m}\subseteq F_{n-m}$. Finally, $F_{n-m} \subseteq z^{-k}F_{n-k-m}$ is included in $\mathrm{span}(e_1,\ldots,e_{n-m},f_1,\ldots,f_k)=E_{n-k,k}$.

\medskip

Now, suppose that there is a cup in $\ba$ with both endpoints on the left of the cut line. Fix then $1 \leq i < n-m$ such that the vertices $i$ and $i+1$ are joined by a cup in $\ba$. We consider $\tilde{\ba}\in \mathbb{B}_{n-k-1,k-1,m}$ the diagram obtained by removing this cup.

Then $q_{n-m}^i(F_{\bullet}) = (F_1,\ldots,F_{i-1},zF_{i+2},\ldots,zF_{n-m})\in K_{\tilde{\ba}}$ and so, by induction, we have $zF_{n-m}{\subseteq} \mathrm{span}(e_1,\ldots,e_{n-k-1},f_1,\ldots,f_{k-1})$ and $F_{n-m}{\subseteq} \mathrm{span}(e_1,\ldots,e_{n-k},f_1,\ldots,f_k){=}E_{n-k,k}$.

We finally show that $e_{n-k-m}$ and $f_{k-m}$ are in $F_{n-m}$. By induction hypothesis, we also have $e_{n-k-m-1}\in zF_{n-m-1}$. There then exists $v\in F_{n-m-1}$ such that $z(v) = e_{n-k-m-1}$. Any such $v$ is of the form $v=e_{n-k-m}+\alpha e_1+ \beta f_1$. Since $i$ and $i+1$ are joined by a cup, we have $F_{i+1} = z^{-1}F_{i-1} \supseteq z^{-1}\{0\} = \mathrm{span}(e_1,f_1)$. Therefore the vectors $e_1$ and $f_1$ belong to $F_{n-m}$ and $e_{n-k-m} = v -\alpha e_1 - \beta f_1 \in F_{n-m}$. One shows similarly that $f_{k-m}\in F_{n-m}$.

\medskip

If $0\leq m\leq k$, then \autoref{lem:vectors_last} implies that $K_{\ba}\subset \dspr_{(n-k,k),m}$.
\end{proof}

\begin{proof}[Proof of \autoref{thm:algebraic_main_result}.]
We first note that $K_\ba$ is an $k$-fold iterated fiber bundle over $\mathbb P^1$. The proof is the same as for the irreducible components of two-row Springer fibers because the defining relations (i) and (ii) of $K_\ba$ in \autoref{def:Ka} are the same as for two-row Springer fibers. However, for the reader's convenience, we briefly recall the argument. We refer to~\cite[Proposition 5.1]{Fun03} and~\cite[Section 8]{Sch12} for additional details. Let $i_1<i_2<\ldots<i_{k}$ denote the vertices connected to a left endpoint of a cup in $\ba$. Note that the space $F_{i_1-1}$ is the same for every flag $(F_1,\ldots,F_{n-m})\in K_\ba$ because each vertex strictly to the left of $i_{1}$ is connected to a ray. Hence, by successively applying relation (ii) in \autoref{def:Ka}, we see that $F_{i_1-1}$ is uniquely determined. As a result, we can consider the fiber bundle
\[
p_1\colon K_\ba\to \mathbb P(z^{-1}F_{i_1-1}/F_{i_1-1})\cong\mathbb P^1\,,\,\,(F_1,\ldots,F_{n-m})\mapsto F_{i_1}/F_{i_1-1}.
\] 
Its typical fiber is denoted $X_2$ and consists of all flags $(F_1,\ldots,F_{n-m})\in K_\ba$ with $F_{i_1}$ (and $F_j$, if $(i_1+1)$ and $j$ are connected by a cup) fixed. Now we repeat the above construction replacing $X_1$ by $X_2$ and the vertex $i_1$ by $i_2$, and continue until we have exhausted all of the vertices $i_1<\cdots<i_{k}$.

In order to prove parts~\ref{second_part_thm} and~\ref{third_part_thm} of \autoref{thm:algebraic_main_result} we first note that $K_\ba$ is smooth since it is a $k$-fold iterated fiber bundle over $\mathbb P^1$. This shows that $K_\ba$ is irreducible since it is as well connected as an iterated fiber bundle over $\mathbb{P}^1$. By \autoref{prop:K_a_contained} the variety $K_\ba$ is contained in $\dspr_{(n-k,k),m}$. Finally, the dimension of $K_\ba$ equals $k$ which is the dimension of $\dspr_{(n-k,k),m}$ by \autoref{thm:results_from_GLW}. Hence, $K_\ba$ is an irreducible component of the (embedded) $\Delta$-Springer variety $\dspr_{(n-k,k),m}$. In particular, parts~\ref{second_part_thm} and~\ref{third_part_thm} of \autoref{thm:algebraic_main_result} are now clear from the above. 

By \autoref{lem:bijection_GLW_cups} we know that the cup diagrams in $\mathbb{B}_{n-k,k,m}$ are in bijective correspondence with the irreducible components of $\dspr_{(n-k,k),m}$. Since the irreducible components $K_\ba$ are different for different $\ba\in\mathbb B_{n-k,k,m}$, we see that the map $\ba\mapsto K_\ba$ explicitly realizes this bijection which proves part~\ref{first_part_thm} of the theorem.
\end{proof}

\begin{exe}
    Let us continue \autoref{ex:first_example}. The irreducible components are then given by
    \begin{itemize}
        \item $K_{\ba}=\{F_1 \subset \mathrm{span}(e_1,f_1) \subset F_2 \subset F_3 \}$,
        \item $K_{\mathbf{b}}=\{F_1 \subset F_2 \subset z^{-1}(F_1) \subset F_3\}$,
        \item $K_{\mathbf{c}}=\{F_1 \subset F_2 \subset F_3 \subset z^{-1}(F_2)\}$.
    \end{itemize}
\end{exe}

\subsection{\texorpdfstring{$\mathbb{C}^*$}{C*}-action and generalized components}

In \cite{GLW21}, an affine paving of the $\Delta$-Springer variety is constructed in order to compute the cohomology ring of the variety. We make this explicit in the two-row case using a $\mathbb{C}^*$-action.

Let $\mathbb{C}^*$ act on $\mathbb{C}^{2N}$ by
\[
    t\cdot e_i = t^{-1}e_i\quad\text{and}\quad t\cdot f_i = tf_i.
\]
This action restricts to $E_{n-k,k}$ and induces a $\mathbb{C}^*$-action on the $ \Delta$-Springer variety $\dspr_{(n-k,k),m}$.

\begin{defn}\label{def:combw}
    A \emph{combinatorial weight of type $(n-k,k)$} is a sequence in $\{\up,\down\}^{n}$ containing $n-k$ ups ($\up$) and $k$ downs ($\down$). 

    Such sequence is called a \emph{$\Delta$-weight of type $(n-k,k,m)$} if there is no $\down$ to the left of any $\up$ among the last $m$ symbols.
\end{defn}

When writing weights we use a $\vert$ to indicate the $m$ rightmost entries and we suppress the unnecessary commas.

\begin{exe}\label{ex:second_example}
    Let $n=5$ and $k=2$ and $m=2$. Then $\down\up\down\vert\!\up\up$ is a $\Delta$-weight of type $(3,2,2)$ and $\up\down\up\vert\!\down\up$ is not.
\end{exe}

\begin{prop}
    \label{prop:fixed_flags}
    There is a bijection between $\Delta$-weights of type $(n-k,k,m)$ and the fixed points under the action of $\mathbb{C}^*$ on $\dspr_{(n-k,k),m}$. 

    Explicitly, given a $\Delta$-weight $\alpha=(\alpha_1,\ldots,\alpha_n)$ of type $(n-k,k,m)$, the corresponding fixed point is the flag $F^{\alpha}_{\bullet}$ with $i$th subspace given by
    \begin{gather*}
        F^{\alpha}_i = \mathrm{span}(e_1,\ldots,e_{\#\{\up\text{'s weakly to the left of }i\}},f_1,\ldots,f_{\#\{\down\text{'s weakly to the left of }i\}}).
    \end{gather*}
\end{prop}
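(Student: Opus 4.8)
The plan is to reduce the statement to a concrete computation of torus fixed points, exactly as one does for $T$-fixed points of ordinary Springer fibers. The weight decomposition of $\mathbb{C}^{2N}$ under the given action has only two graded pieces, $V_e=\mathrm{span}(e_1,\ldots,e_N)$ (weight $-1$) and $V_f=\mathrm{span}(f_1,\ldots,f_N)$ (weight $+1$). A subspace is $\mathbb{C}^*$-stable exactly when it equals the direct sum of its intersections with $V_e$ and $V_f$, so a flag $F_\bullet\in Y_{n-m}$ is fixed if and only if each $F_i$ splits as $(F_i\cap V_e)\oplus(F_i\cap V_f)$. First I would record this, together with the observation that $z$ commutes with the action and preserves each of $V_e,V_f$ (as $z e_j=e_{j-1}$, $zf_j=f_{j-1}$), so that $\dspr_{(n-k,k),m}$ is $\mathbb{C}^*$-stable and its fixed locus is the intersection of $\dspr_{(n-k,k),m}$ with the fixed locus of $Y_{n-m}$.

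\textbf{Pinning down the fixed flags.} Next I would combine $\mathbb{C}^*$-stability with the Springer condition $zF_i\subseteq F_{i-1}$. Setting $A_i=F_i\cap V_e$ and $B_i=F_i\cap V_f$, the inclusion $zF_i\subseteq F_{i-1}$ together with $zV_e\subseteq V_e$, $zV_f\subseteq V_f$ gives $zA_i\subseteq A_{i-1}\subseteq A_i$, and likewise for $B_i$; hence $A_i$ and $B_i$ are $z$-stable. Since $z$ restricts to a regular nilpotent (a single Jordan block) on each of $V_e$ and $V_f$, its only invariant subspaces there are the standard coordinate ones, so $A_i=\mathrm{span}(e_1,\ldots,e_{a_i})$ and $B_i=\mathrm{span}(f_1,\ldots,f_{b_i})$ with $a_i+b_i=i$. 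As $i$ runs from $0$ to $n-m$, exactly one of $a_i,b_i$ increases by $1$ at each step, so the fixed flag is encoded by a word $\alpha_1\cdots\alpha_{n-m}\in\{\up,\down\}^{n-m}$ ($\alpha_i=\up$ when $a_i$ jumps, $\down$ when $b_i$ jumps). Conversely, every such word defines a fixed flag via the displayed formula, the properties $F^\alpha_{i-1}\subseteq F^\alpha_i$, $\dim F^\alpha_i=i$ and $zF^\alpha_i\subseteq F^\alpha_{i-1}$ being immediate from the coordinate description.

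\textbf{Matching the constraints with $\Delta$-weights.} I would then impose the conditions defining $\dspr_{(n-k,k),m}\subseteq Y_{n-m}$ from \eqref{eq:delta_Y}. For a fixed flag, $F_{n-m}=\mathrm{span}(e_1,\ldots,e_a,f_1,\ldots,f_b)$ with $a=a_{n-m}$, $b=b_{n-m}$, while $z^mE_{n-k,k}=\mathrm{span}(e_1,\ldots,e_{n-k-m},f_1,\ldots,f_{k-m})$. The sandwich $z^mE_{n-k,k}\subseteq F_{n-m}\subseteq E_{n-k,k}$ holds iff $n-k-m\le a\le n-k$ and $k-m\le b\le k$, which, using $a+b=n-m$, collapses to the single condition $n-k-m\le a\le n-k$ (equivalently one may verify it against \autoref{lem:vectors_last}). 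These are exactly the length-$(n-m)$ words that extend to a $\Delta$-weight of type $(n-k,k,m)$: a word with $a$ symbols $\up$ is completed to a length-$n$ weight with $n-k$ ups and $k$ downs by appending, in the last $m$ positions, the block $\up^{\,(n-k)-a}\down^{\,k-b}$. Both exponents are nonnegative precisely when $n-k-m\le a\le n-k$, their sum is $m$, and the block is automatically $\up$'s-before-$\down$'s, so it is the unique suffix satisfying the $\Delta$-weight condition. This gives mutually inverse assignments between $\mathbb{C}^*$-fixed flags of $\dspr_{(n-k,k),m}$ and $\Delta$-weights, and comparing with the stated formula for $F^\alpha_\bullet$ identifies the bijection as the asserted one.

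\textbf{Main obstacle.} The linear algebra is routine once the fixed points are recognized as coordinate flags. The only genuinely delicate point is the bookkeeping reconciling the length-$n$ weights with the length-$(n-m)$ flags: one must check that the last $m$ entries of a $\Delta$-weight carry no information beyond the dimension data of $F_{n-m}$, and that the $\Delta$-weight condition is exactly what renders this completion unique while respecting the global counts of $n-k$ ups and $k$ downs. Confirming that the forced suffix has nonnegative exponents precisely under the sandwich constraints is the crux of the argument.
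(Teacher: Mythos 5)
Your proof is correct and follows essentially the same route as the paper: the forward direction is immediate since the flags $F^{\alpha}_{\bullet}$ are built from weight vectors, and the converse pins down the fixed flags as coordinate flags before matching the sandwich condition on $F_{n-m}$ with the unique admissible length-$m$ suffix of a $\Delta$-weight. The only cosmetic difference is that you identify the coordinate flags via $z$-stability of the two weight-space pieces and the classification of invariant subspaces of a regular nilpotent, whereas the paper runs a short induction on $l$; the combinatorial bookkeeping at the end is the same (and your inequality $k-m\le b\le k$ is the correct form of the constraint on the number of $\down$'s).
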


\begin{proof}
    It is clear that the flag $F^{\alpha}_{\bullet}$ is fixed by the $\mathbb{C}^*$-action since each space of the flag is generated by weight vectors with respect to $\mathbb{C}^*$-action.

    Conversely, let $F_{\bullet}=(F_1,\ldots,F_{n-m})\in\dspr_{(n-k,k),m}$ fixed under the action of $\mathbb{C}^*$. Using induction on $l$, one see that the vector space $F_l$ must be spanned by $e_1,\ldots,e_p,f_1,\ldots,f_q$ for some $0\leq p \leq n-k$ and $0 \leq q \leq k$ with $p+q=l$. By counting occurrences of $e$'s and $f$'s, we recover the entries $\alpha_1,\ldots,\alpha_{n-m}$ of the $\Delta$-weight corresponding to the fixed flag $F_{\bullet}$. Let $r$ be the number of $\down$ among these $\alpha_1,\ldots,\alpha_{n-m}$. Then the condition $z^m(E_{n-k,k})\subseteq F_{n-m}$ forces to have $0\leq r \leq k-m$. We then complete the weight $\alpha_1,\ldots,\alpha_{n-m}$ by $r+m-k$ $\up$ followed by $k-r$ $\down$ in order to obtain the $\Delta$-weight of type $(n-k,k,m)$ corresponding to $F_{\bullet}$.
\end{proof}

We define the attracting cell of the fixed point $F^{\alpha}_{\bullet}$ by
\begin{gather*}
    K_{\alpha} = \{F_{\bullet} \in \dspr_{(n-k,k),m} \mid \lim_{t\to +\infty} t\cdot F_{\bullet} = F^{\alpha}_{\bullet}\}.
\end{gather*}
To test whether a flag $F_{\bullet}$ lies inside $K_{\alpha}$, we follow \cite[Section 2.2]{SW12}. We define $P$ to be the subspace of $E_{n-k,k}$ spanned by the $e_i$'s and $Q$ to be the subspace of $E_{n-k,k}$ spanned by the $f_i$'s. Given a flag $F_{\bullet}$ in $\dspr_{(n-k,k),m}$, we associate a new flag $F_{\bullet}^{\mathrm{ass}}$ by setting $F_i^{\mathrm{ass}} = P_i + Q_i \subset P\oplus Q$, where $P_i = F_i\cap P$ and $Q_i$ is the image of $F_i$ by the projection onto $Q$ along $P$. It is then clear that this flag is stable under the $\mathbb{C}^*$-action. The following is similar to \cite[Proposition 14]{SW12}.

\begin{lem}
    \label{lem:associated_flag} 
    Let $\alpha$ be a $\Delta$-weight of type $(n-k,k,m)$ and $F_{\bullet}\in \dspr_{(n-k,k),m}$.
    The flag $F_{\bullet}$ is in the attracting cell $K_{\alpha}$ if and only $F_{\bullet}^{\mathrm{ass}} = F_{\bullet}^{\alpha}$.
\end{lem}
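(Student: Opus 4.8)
The plan is to compute the limit $\lim_{t\to+\infty}t\cdot F_{\bullet}$ explicitly and to observe that it is nothing other than the associated flag $F_{\bullet}^{\mathrm{ass}}$; the lemma then follows at once from the very definition of the attracting cell. So the entire content reduces to a single linear-algebra computation, carried out one subspace at a time.

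First I would record that, for the given $\mathbb{C}^*$-action, the subspace $P$ is the weight space of weight $-1$ and $Q$ is the weight space of weight $+1$, so that $E_{n-k,k}=P\oplus Q$ is the weight decomposition. Since every $F_i$ lies in $E_{n-k,k}$ (recall $F_{n-m}\subseteq E_{n-k,k}$), the key claim is that for an arbitrary subspace $W\subseteq E_{n-k,k}$ one has
\[
\lim_{t\to+\infty} t\cdot W = (W\cap P)\oplus \pi_Q(W),
\]
where $\pi_Q$ denotes the projection onto $Q$ along $P$. To prove this, choose a basis $u_1,\ldots,u_r$ of $W\cap P$ and extend it to a basis $u_1,\ldots,u_r,v_1,\ldots,v_{d-r}$ of $W$, where $d=\dim W$. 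Writing $v_j=p_j+q_j$ with $p_j\in P$ and $q_j\in Q$, the images $q_j=\pi_Q(v_j)$ are linearly independent and form a basis of $\pi_Q(W)$, because $\ker(\pi_Q|_W)=W\cap P$. Since $t\cdot u_i=t^{-1}u_i$ and $t\cdot v_j=t^{-1}p_j+t\,q_j$, after rescaling the generators the subspace $t\cdot W$ is spanned by $u_1,\ldots,u_r$ together with $t^{-2}p_1+q_1,\ldots,t^{-2}p_{d-r}+q_{d-r}$, and letting $t\to+\infty$ these tend to $u_1,\ldots,u_r,q_1,\ldots,q_{d-r}$, which span $(W\cap P)\oplus\pi_Q(W)$.

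Applying this claim to $W=F_i$ for each $i$, and using that the limit of a partial flag is computed degreewise (the partial flag variety embeds $\mathbb{C}^*$-equivariantly into a product of Grassmannians), I obtain $\lim_{t\to+\infty}t\cdot F_i=(F_i\cap P)\oplus\pi_Q(F_i)=P_i+Q_i=F_i^{\mathrm{ass}}$ for all $i$, that is, $\lim_{t\to+\infty}t\cdot F_{\bullet}=F_{\bullet}^{\mathrm{ass}}$. By the definition of the attracting cell, $F_{\bullet}\in K_{\alpha}$ means exactly $\lim_{t\to+\infty}t\cdot F_{\bullet}=F_{\bullet}^{\alpha}$, which by the previous identity is equivalent to $F_{\bullet}^{\mathrm{ass}}=F_{\bullet}^{\alpha}$. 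This is precisely the assertion of the lemma.

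The one delicate point — the main obstacle — is the justification that the Grassmannian limit is genuinely the coordinate-type subspace obtained by discarding the $t^{-2}$ terms, rather than something of smaller dimension or ill-defined. This is handled by the rescaling step above together with the observation that the rescaled generators stay linearly independent in the limit, since their leading parts lie in the complementary weight spaces $P$ and $Q$ with $P\cap Q=0$. Everything else is bookkeeping; in particular one notes in passing that $F_{\bullet}^{\mathrm{ass}}$ is automatically a genuine flag of the correct type and a $\mathbb{C}^*$-fixed point, because limits preserve both inclusions and dimensions, and the resulting subspaces are sums of weight spaces. This mirrors \cite[Proposition 14]{SW12}, to which the argument reduces once the weight decomposition $E_{n-k,k}=P\oplus Q$ is in place.
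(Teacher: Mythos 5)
Your proof is correct, and it supplies in full the argument that the paper itself omits: the paper gives no proof of this lemma beyond the remark that it is ``similar to \cite[Proposition 14]{SW12}'', and your explicit computation of $\lim_{t\to+\infty}t\cdot W=(W\cap P)\oplus\pi_Q(W)$ via rescaled generators with linearly independent limits is exactly the standard argument underlying that reference. The one point you rightly flag -- that the Grassmannian limit is the span of the limiting frame because the leading parts land in the complementary weight spaces $P$ and $Q$ -- is handled correctly, so nothing is missing.
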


\autoref{lem:associated_flag} implies that the cell $K_{\alpha}$ is an affine variety. We can also explicitly describe these attracting cells. Let $\alpha$ be a $\Delta$-weight and construct a $\Delta$-cup diagram $C(\alpha)$ as follows. First successively connect neighboring pairs $\down\up$ by a cup, ignoring symbols that are already connected. When there are no more neighboring pairs $\down\up$ among the remaining symbols, then connect all remaining symbols to ray.

\begin{thm}\label{thm:paving}~
    \begin{enumerate}
    \item There is a bijection between the $\Delta$-weights of type $(n-k,k,m)$ and the cells of an affine paving of $\dspr_{(n-k,k),m}$.
    \item The attracting cell $K_{\alpha}$ consists of all flags $F_{\bullet}\in\dspr_{(n-k,k),m}$ satisfying the following conditions:
    \begin{enumerate}[(i)]
        \item $F_j = z^{-\frac{j-i+1}{2}}F_{i-1}$ if $1 \leq i < j \leq n-m$ and $i \CupConnect j$ in $C(\alpha)$,
        \item $F_i=F^{\alpha}_i$ if $\RayConnect$ in $C(\alpha)$,
        \item $F_{i-1} \cap P = F_i \cap P$ if $i$ is the left endpoint of a cup in $C(\alpha)$.
    \end{enumerate}
    \end{enumerate}
\end{thm}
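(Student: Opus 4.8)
The plan is to establish \autoref{thm:paving} by leveraging the $\mathbb{C}^*$-action together with the Bialynicki-Birula decomposition, reducing everything to \autoref{lem:associated_flag}, which provides the explicit test for membership in an attracting cell. First I would invoke the general theory: since $\dspr_{(n-k,k),m}$ is a projective variety carrying a $\mathbb{C}^*$-action with isolated fixed points (by \autoref{prop:fixed_flags}, the fixed points are in bijection with $\Delta$-weights of type $(n-k,k,m)$), the Bialynicki-Birula theorem guarantees that the attracting cells $K_\alpha$ are locally closed affine cells forming a paving, with each cell $K_\alpha$ isomorphic to an affine space of dimension equal to the number of positive weights of the $\mathbb{C}^*$-action on the tangent space at $F^\alpha_\bullet$. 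This immediately yields part (1), once one checks that the limits $\lim_{t\to+\infty} t\cdot F_\bullet$ always exist and land in a fixed point — which follows from projectivity of $Y_{n-m}$ and the closedness of $\dspr_{(n-k,k),m}$ inside it.

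For part (2), the strategy is to match the three conditions (i)--(iii) with the geometric characterization of the attracting cell. I would proceed by first using \autoref{lem:associated_flag}: a flag $F_\bullet$ lies in $K_\alpha$ precisely when its associated flag $F_\bullet^{\mathrm{ass}}$ equals the fixed flag $F^\alpha_\bullet$. The task is then to translate the equation $F_\bullet^{\mathrm{ass}} = F^\alpha_\bullet$ into the combinatorial conditions read off from the cup diagram $C(\alpha)$. Condition (ii) is the most direct: for a vertex $i$ carrying a ray in $C(\alpha)$, the construction of $C(\alpha)$ places $\up$'s at ray positions in a canonical way, and the requirement $F_i = F^\alpha_i$ records that the flag has not yet deviated from the fixed flag at these stable steps. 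Condition (iii), $F_{i-1}\cap P = F_i\cap P$ at a left cup endpoint, encodes that the dimension jump at step $i$ occurs in the $Q$-direction (the $f$-part), reflecting that $i$ is the position of a $\down$ opening a cup; this is exactly the content of matching $Q_i$-dimensions in the associated flag. Condition (i), relating $F_j$ to $z^{-\frac{j-i+1}{2}}F_{i-1}$ across a cup $i \CupConnect j$, propagates the $z$-invariance and is established essentially as in \autoref{def:Ka} and the analysis following \cite[Section 2.2]{SW12}.

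The cleanest route is to show the two inclusions separately. For the forward direction, assuming $F_\bullet \in K_\alpha$, I would use the invariance of $P$ and $Q$ under $\mathbb{C}^*$ and the explicit grading ($t\cdot e_i = t^{-1}e_i$, $t\cdot f_i = tf_i$) to analyze the limit flag componentwise, showing that stability of the limit forces (i)--(iii). For the reverse direction, I would verify that any flag satisfying (i)--(iii) has associated flag equal to $F^\alpha_\bullet$, then apply \autoref{lem:associated_flag}. The bookkeeping here is parallel to the two-row Springer fiber case treated in \cite{SW12}, with the additional constraint coming from the cut line and the condition $z^m(E_{n-k,k})\subseteq F_{n-m}$; the cups with right endpoint to the right of the cut line carry no relation, consistent with \autoref{def:Ka}.

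The main obstacle I anticipate is the careful verification that the combinatorial cup-construction $C(\alpha)$ faithfully reproduces the attracting cell structure, particularly near the cut line where cups may straddle the boundary or connect to vertices in $\{n-m+1,\ldots,n\}$. The subtlety is that the $\Delta$-weight has its last $m$ entries constrained (no $\down$ left of an $\up$), and the pairing procedure in the definition of $C(\alpha)$ must be reconciled with the fact that the flag is only recorded up to index $n-m$, while the fixed point $F^\alpha_\bullet$ is determined by the full weight $\alpha$. Ensuring that the dimension count of the cell matches the number of attracting directions, and that conditions (i)--(iii) are genuinely equivalent to $F_\bullet^{\mathrm{ass}}=F^\alpha_\bullet$ rather than merely necessary, will require the most attention; I expect this to hinge on a precise comparison with \cite[Proposition 14]{SW12} adapted to the $\Delta$-setting.
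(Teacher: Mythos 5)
Your overall strategy matches the paper's: part (1) rests on the bijection between $\mathbb{C}^*$-fixed points and $\Delta$-weights from \autoref{prop:fixed_flags}, and part (2) is proved by translating the membership criterion $F_{\bullet}^{\mathrm{ass}}=F_{\bullet}^{\alpha}$ of \autoref{lem:associated_flag} into conditions (i)--(iii) via two inclusions. This is exactly what the paper does --- its proof of part (2) is a one-line citation of \cite[Theorem 36]{SW22} with the remark that the $\Delta$-setting does not affect the argument --- so your plan is essentially an expanded version of the same route.

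The one step that would fail as stated is the appeal to the Bialynicki-Birula theorem to conclude that the attracting sets are affine cells of dimension equal to the number of positive weights on the tangent space at the fixed point. That form of the theorem requires smoothness, and $\dspr_{(n-k,k),m}$ is in general singular (it is a union of smooth irreducible components with nontrivial intersections). Affineness of $K_{\alpha}$ must instead be read off from the explicit description: the paper notes that \autoref{lem:associated_flag} already implies $K_{\alpha}$ is affine, and conditions (i)--(iii) exhibit it as an affine space; the decomposition into these cells then follows from projectivity (every flag has a limit, which is a fixed point) rather than from BB. Since your part (2) supplies precisely this explicit description, the gap is repairable within your own outline, but the logical order matters: you cannot obtain part (1) from BB first and then prove part (2); the analysis underlying part (2) is what justifies the affineness claimed in part (1).
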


\begin{proof}
    The first item follows from \autoref{prop:fixed_flags}. Concerning the second item, we can use the same arguments as in \cite[Theorem 36]{SW22}; working with a $\Delta$-Springer variety does not affect the proof.
\end{proof}

\begin{rem}
    We also obtain a description of the closures of the cells $K_{\alpha}$ by removing the condition (iii) in the description of $K_{\alpha}$ in the above theorem.

    In addition, if $C(\alpha)$ contains $k$ cups, then it is clear that the closure of $K_{\alpha}$ is the irreducible component associated with the $\Delta$-cup diagram $C(\alpha)$.
\end{rem}

\begin{exe}
\label{ex:weight-cup-diagrams}
    Let $\lambda=(3,3)$ and $m=2$.
    \begin{itemize}
        \item for $\alpha=\down\down\up\down\vert\!\up\up$ we get
        \begin{gather*}
            C(\alpha) = \xy (0,0)*{
            \begin{tikzpicture}[scale=0.65]
            \draw[thick,densely dotted] (-.5,1) to (5.5,1);
            \draw[black,fill=black] (0,1) circle (2.0pt);
            \draw[black,fill=black] (1,1) circle (2.0pt);
            \draw[black,fill=black] (2,1) circle (2.0pt);
            \draw[black,fill=black] (3,1) circle (2.0pt);
            \draw[black,fill=black] (4,1) circle (2.0pt);
            \draw[black,fill=black] (5,1) circle (2.0pt);
            \draw[very thick] (1,1) to[out=-90,in=180] (1.5,.5) to[out=0,in=-90] (2,1);
            \draw[very thick] (0,1) to[out=-75,in=180] (2.5,-.4) to[out=0,in=-105] (5,1);
            \draw[very thick] (3,1) to[out=-90,in=180] (3.5,.5) to[out=0,in=-90] (4,1);
            \draw[cutline] (3.5,-1) to (3.5,1.15);
            \end{tikzpicture}
            }\endxy
        \end{gather*}
        and $F_{\bullet}^{\alpha} = (\mathrm{span}(f_1),\mathrm{span}(f_1,f_2),\mathrm{span}(e_1,f_1,f_2),\mathrm{span}(e_1,f_1,f_2,f_3))$. Then the closure of the attracting cell consists of all flags $F_{\bullet}\in\dspr_{(3,3),2}$ such that
        \begin{gather*}
            F_1\subseteq F_2 \subseteq z^{-1}(F_1) \subseteq F_4 \subseteq \mathbb{C}^6.
        \end{gather*}
        This is in fact the irreducible component of $\dspr_{(3,3),2}$ labeled by the diagram $C(\alpha)$.
        \item for $\alpha = \up\down\down\down\vert\!\up\up$, we get
        \begin{gather*}
            C(\alpha)=\xy (0,0)*{
            \begin{tikzpicture}[scale=0.65]
            \draw[thick,densely dotted] (-.5,1) to (5.5,1);
            \draw[black,fill=black] (0,1) circle (2.0pt);
            \draw[black,fill=black] (1,1) circle (2.0pt);
            \draw[black,fill=black] (2,1) circle (2.0pt);
            \draw[black,fill=black] (3,1) circle (2.0pt);
            \draw[black,fill=black] (4,1) circle (2.0pt);
            \draw[black,fill=black] (5,1) circle (2.0pt);
            \draw[very thick] (0,1) to (0,-1);
            \draw[very thick] (1,1) to (1,-1);
            \draw[very thick] (2,1) to[out=-80,in=180] (3.5,-.3) to[out=0,in=-100] (5,1);
            \draw[very thick] (3,1) to[out=-90,in=180] (3.5,.5) to[out=0,in=-90] (4,1);
            \draw[cutline] (3.5,-1) to (3.5,1.15);
            \end{tikzpicture}
            }\endxy
        \end{gather*}
        and $F_{\bullet}^{\alpha} = (\mathrm{span}(e_1),\mathrm{span}(e_1,f_1),\mathrm{span}(e_1,f_1,f_2),\mathrm{span}(e_1,f_1,f_2,f_3))$. Then the closure of the attracting cell consists of all flags $F_{\bullet}\in\dspr_{(3,3),2}$ such that
        \begin{gather*}
            \mathrm{span}(e_1)\subseteq \mathrm{span}(e_1,f_1) \subseteq F_3 \subseteq F_4 \subseteq \mathbb{C}^6.
        \end{gather*}
        This attracting cell is not an irreducible component of $\dspr_{(3,3),2}$.
        \item for $\alpha = \up\up\up\down\vert\!\down\down$, we get
        \begin{gather*}
            C(\alpha)=\xy (0,0)*{
            \begin{tikzpicture}[scale=0.65]
            \draw[thick,densely dotted] (-.5,1) to (5.5,1);
            \draw[black,fill=black] (0,1) circle (2.0pt);
            \draw[black,fill=black] (1,1) circle (2.0pt);
            \draw[black,fill=black] (2,1) circle (2.0pt);
            \draw[black,fill=black] (3,1) circle (2.0pt);
            \draw[black,fill=black] (4,1) circle (2.0pt);
            \draw[black,fill=black] (5,1) circle (2.0pt);
            \draw[very thick] (0,1) to (0,-1);
            \draw[very thick] (1,1) to (1,-1);
            \draw[very thick] (2,1) to (2,-1);
            \draw[very thick] (3,1) to (3,-1);
            \draw[very thick] (4,1) to (4,-1);
            \draw[very thick] (5,1) to (5,-1);
            \draw[cutline] (3.5,-1) to (3.5,1.15);
            \end{tikzpicture}
            }\endxy
        \end{gather*}
        and $F_{\bullet}^{\alpha} = (\mathrm{span}(e_1),\mathrm{span}(e_1,e_2),\mathrm{span}(e_1,e_2,e_3),\mathrm{span}(e_1,e_2,e_3,f_1))$. Then the closure of the attracting cell consists only of the flag $F^{\alpha}_{\bullet}$
    \end{itemize}
\end{exe}

As a corollary to \autoref{thm:paving}, we obtain a diagrammatic description of the homology of the $\Delta$-Springer variety.

\begin{cor}
    The homology $H_{\ast}(\dspr_{(n-k,k),m})$ has a basis indexed by the $\Delta$-weights of type $(n-k,k,m)$. Moreover, the homological degree of an element of this basis is given by twice the number of cups in $C(\alpha)$.
\end{cor}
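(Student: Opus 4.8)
The plan is to derive the corollary from the affine paving of \autoref{thm:paving} via the standard fact that an affine paving of a complex projective variety computes its homology. Since $\dspr_{(n-k,k),m}$ is a closed subvariety of the projective variety $Y_{n-m}$, it is compact, so its singular homology agrees with Borel--Moore homology.

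First I would invoke the following general principle: if a compact complex variety $X$ is filtered by closed subvarieties $\emptyset=X_0\subseteq X_1\subseteq\dots\subseteq X_r=X$ such that each $X_j\setminus X_{j-1}$ is a disjoint union of affine spaces, then $H_\ast(X)$ is free, vanishes in odd degrees, and has a basis consisting of the classes of the closures of the cells, with a cell isomorphic to $\mathbb{A}^d$ contributing a basis element in degree $2d$. This is proved by induction along the filtration using the long exact sequences of the pairs $(X_j,X_{j-1})$ in Borel--Moore homology, together with $H^{BM}_\ast(\mathbb{A}^d)\cong H^{BM}_{\ast-2d}(\mathrm{pt})$ and the vanishing of odd Borel--Moore homology of affine spaces, which forces all connecting maps to vanish (see, for instance, Fulton's \emph{Intersection Theory}). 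By \autoref{thm:paving}, $\dspr_{(n-k,k),m}$ carries such a paving, with cells $K_\alpha\cong\mathbb{A}^{d_\alpha}$ indexed by the $\Delta$-weights $\alpha$ of type $(n-k,k,m)$. This already produces a homology basis indexed by $\Delta$-weights.

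It remains to show that $d_\alpha$ equals the number of cups in $C(\alpha)$. Here I would read off the dimension from the explicit description in \autoref{thm:paving}(2). By definition of $Y_{n-m}$ every flag satisfies $F_i\subseteq z^{-1}F_{i-1}$, so each successive quotient $F_i/F_{i-1}$ is a line in the at-most-two-dimensional space $z^{-1}F_{i-1}/F_{i-1}$. At a ray vertex $i$, condition (ii) rigidifies $F_i=F^\alpha_i$, contributing no modulus; at the right endpoint $j\le n-m$ of a cup, condition (i) determines $F_j$ from $F_{i-1}$, again contributing no modulus; and at the left endpoint $i$ of a cup, condition (iii) cuts the projective line of choices for $F_i/F_{i-1}$ down to an affine line, contributing exactly one parameter. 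Since in a $\Delta$-cup diagram every cup has its left endpoint among $\{1,\dots,n-m\}$, the number of left cup endpoints equals the number of cups, and therefore $d_\alpha$ equals the number of cups in $C(\alpha)$, giving homological degree $2d_\alpha$. To promote this count to an honest parametrization by $\mathbb{A}^{d_\alpha}$ one can induct on the cups by peeling off an innermost one (which joins adjacent vertices) through the morphism $q^i_{n-m}$ of \eqref{eq:q_i_morphism}, exactly as in \autoref{lem:induction_alg_components}.

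As a consistency check, when $C(\alpha)$ has the maximal number $k$ of cups, the remark following \autoref{thm:paving} identifies $\overline{K_\alpha}$ with the irreducible component labelled by $C(\alpha)$, which has dimension $k$ by \autoref{thm:algebraic_main_result}, in agreement with $d_\alpha=k$. The only delicate point is the dimension computation for the non-maximal cells: one must verify that condition (iii) indeed removes precisely one point from each $\mathbb{P}^1$ of line choices, and that no cup---including those whose right endpoint lies to the right of the cut line---fails to contribute its single affine parameter.
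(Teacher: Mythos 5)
Your proposal is correct and follows essentially the same route as the paper, which states this corollary without further argument as an immediate consequence of the affine paving in \autoref{thm:paving}: the cells give a homology basis, and each cell contributes in degree twice its dimension. Your explicit dimension count (one affine parameter per left cup endpoint, none at rays or at right endpoints determined by condition (i)) is exactly the justification the paper leaves implicit, and it is sound since every cup in a $\Delta$-cup diagram has its left endpoint among $\{1,\dots,n-m\}$.
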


\section{Comparison with Springer fibers and exotic Springer fibers}
\label{sec:comparison}

We still work with a $\Delta$-Springer variety $\dspr_{(n-k,k),m}$ for a two-row partition $(n-k,k)$ and $0\leq m \leq k$. We compare this variety with the two-row Springer fiber $\spr_{(n-k,k)}$ and with the exotic Springer fiber $\espr_{((n-m-k),(k))}$. 

\subsection{Comparison with the two-row Springer fiber}
\label{sec:delta-usual}

In this subsection, we give a negative answer to \cite[Question 8.7]{GLW21}. We already have remarked that if $m=0$ then the $\Delta$-Springer variety $\dspr_{\lambda,0}$ is equal to the Springer fiber $\spr_{\lambda}$. Consider $\pi \colon Y_n \rightarrow Y_{n-m}$ the morphism of algebraic varieties which forgets the last $m$ subspaces of a flag:
\begin{gather*}
    \pi(F_1,\ldots,F_n) = (F_1,\ldots,F_{n-m}).
\end{gather*}
Forgetting the cut line defines an injection $\iota\colon\mathbb{B}_{n-k,k,m}\rightarrow\mathbb{B}_{n-k,k,0}$. The set $\mathbb{B}_{n-k,k,m}$ indexes the irreducible components of the $\Delta$-Springer variety $\dspr_{(n-k,k),m}$ and the set $\mathbb{B}_{n-k,k,0}$ indexes the irreducible components of the Springer fiber $\spr_{(n-k,k)}$. Therefore, if $\ba\in\mathbb{B}_{n-k,k,m}$, then $K_{\iota(\ba)}$ is an irreducible component of $\spr_{(n-k,k)}$. In \cite[Remark 5.12]{GLW21}, the authors showed that $\pi$ induces a birational morphism form $\bigcup_{\ba\in \mathbb{B}_{n-k,k,m}}K_{\iota(\ba)}$ to $\dspr_{(n-k,k),m}$, and asked whether this map is an isomorphism. We now give an example that answers this question negatively.

\begin{exe}
    \label{ex:counter-ex}
    Let $\lambda=(3,2)$ and $m=2$. The corresponding $\Delta$-Springer variety $\dspr_{(3,2),2}$ is the union of three irreducible components. Consider the flag 
    \[
    F_{\bullet}=(F_1,F_2,F_3) = (\mathrm{span}(e_1),\mathrm{span}(e_1,e_2),\mathrm{span}(e_1,e_2,f_1)).
    \]
    This flag lies in the irreducible components corresponding to the following diagrams:
    \begin{equation*}
    \xy (0,0)*{
    \begin{tikzpicture}[scale=0.65]
    \draw[thick,densely dotted] (-.5,1) to (4.5,1);
    \draw[black,fill=black] (0,1) circle (2.0pt);
    \draw[black,fill=black] (1,1) circle (2.0pt);
    \draw[black,fill=black] (2,1) circle (2.0pt);
    \draw[black,fill=black] (3,1) circle (2.0pt);
    \draw[black,fill=black] (4,1) circle (2.0pt);
    \draw[very thick] (0,1) to (0,-1);
    \draw[very thick] (1,1) to[out=-80,in=180] (2.5,-.3) to[out=0,in=-100] (4,1);
    \draw[very thick] (2,1) to[out=-90,in=180] (2.5,.5) to[out=0,in=-90] (3,1);
    \draw[cutline] (2.5,-1) to (2.5,1.15);
    \end{tikzpicture}
    }\endxy
    \qquad\text{and}\qquad
    \xy (0,0)*{
    \begin{tikzpicture}[scale=0.65]
    \draw[thick,densely dotted] (-.5,1) to (4.5,1);
    \draw[black,fill=black] (0,1) circle (2.0pt);
    \draw[black,fill=black] (1,1) circle (2.0pt);
    \draw[black,fill=black] (2,1) circle (2.0pt);
    \draw[black,fill=black] (3,1) circle (2.0pt);
    \draw[black,fill=black] (4,1) circle (2.0pt);
    \draw[very thick] (0,1) to[out=-80,in=180] (1.5,-.3) to[out=0,in=-100] (3,1);
    \draw[very thick] (1,1) to[out=-90,in=180] (1.5,.5) to[out=0,in=-90] (2,1);
    \draw[very thick] (4,1) to (4,-1);
    \draw[cutline] (2.5,-1) to (2.5,1.15);
    \end{tikzpicture}
    }\endxy
    \end{equation*}
    Now, consider the two different flags
    \[
    (F_1,F_2,F_3,\mathrm{span}(e_1,e_2,e_3,f_1),\mathrm{span}(e_1,e_2,e_3,f_1,f_2))
    \]
    and 
    \[
    (F_1,F_2,F_3,\mathrm{span}(e_1,e_2,f_1,f_2),\mathrm{span}(e_1,e_2,e_3,f_1,f_2)).
    \]
    in $\spr_{(3,2)}$. By definition, the image of both of these flags under $\pi$ is $F_{\bullet}$. They belong to the irreducible components of $\spr_{(3,2)}$ corresponding to the respective diagrams
    \begin{equation*}
    \xy (0,0)*{
    \begin{tikzpicture}[scale=0.65]
    \draw[thick,densely dotted] (-.5,1) to (4.5,1);
    \draw[black,fill=black] (0,1) circle (2.0pt);
    \draw[black,fill=black] (1,1) circle (2.0pt);
    \draw[black,fill=black] (2,1) circle (2.0pt);
    \draw[black,fill=black] (3,1) circle (2.0pt);
    \draw[black,fill=black] (4,1) circle (2.0pt);
    \draw[very thick] (0,1) to (0,-1);
    \draw[very thick] (1,1) to[out=-80,in=180] (2.5,-.3) to[out=0,in=-100] (4,1);
    \draw[very thick] (2,1) to[out=-90,in=180] (2.5,.5) to[out=0,in=-90] (3,1);
    \end{tikzpicture}
    }\endxy
    \qquad\text{and}\qquad
    \xy (0,0)*{
    \begin{tikzpicture}[scale=0.65]
    \draw[thick,densely dotted] (-.5,1) to (4.5,1);
    \draw[black,fill=black] (0,1) circle (2.0pt);
    \draw[black,fill=black] (1,1) circle (2.0pt);
    \draw[black,fill=black] (2,1) circle (2.0pt);
    \draw[black,fill=black] (3,1) circle (2.0pt);
    \draw[black,fill=black] (4,1) circle (2.0pt);
    \draw[very thick] (0,1) to[out=-80,in=180] (1.5,-.3) to[out=0,in=-100] (3,1);
    \draw[very thick] (1,1) to[out=-90,in=180] (1.5,.5) to[out=0,in=-90] (2,1);
    \draw[very thick] (4,1) to (4,-1);
    \end{tikzpicture}
    }\endxy
    \end{equation*}
    which are both in $\mathbb{B}_{3,2,2}$. Therefore, the restriction of $\pi$ to $\bigcup_{\ba\in \mathbb{B}_{3,2,2}}K_{\iota(\ba)}$ is not an isomorphism onto the $\Delta$-Springer variety $\dspr_{(3,2),2}$ since it is not bijective. Note that we cannot remove more irreducible components of the Springer fiber $\spr_{(3,2)}$ to make the map injective. Consider the three flags
    \begin{align*}
    (&\mathrm{span}(e_1),&&\mathrm{span}(e_1,e_2),&&\mathrm{span}(e_1,e_2,e_3)),\\
    (&\mathrm{span}(e_1+f_1),&&\mathrm{span}(e_1+f_1,e_2+f_2),&&\mathrm{span}(e_1,f_2,e_2+f_2)),\\
    (&\mathrm{span}(e_1+f_1),&&\mathrm{span}(e_1,f_1),&&\mathrm{span}(e_1,f_1,e_2-f_2)).
    \end{align*}
    in $\dspr_{(3,2),2}$. For each of these flags, one can check that they are in the image of a unique irreducible component of $\spr_{(3,2)}$. In particular, throwing out an additional irreducible component of $\spr_{(3,2)}$ would not yield a surjection onto $\dspr_{(3,2),2}$.
\end{exe}

\subsection{Comparison with the exotic Springer fiber for a one-row bipartition}
\label{sec:delta-exotic}

We now compare a two-row $\Delta$-Springer variety with an exotic Springer fiber associated with a one-row bipartition. Before doing so, let us recall the description of \cite{SW22} of the irreducible components of exotic Springer fibers for one-row bipartitions.

Firstly, \cite[Proposition 14]{SW22} shows that we can ignore the symplectic structure in the case of exotic Springer fibers for one-row bipartitions. If the bipartition is $((p),(q-p))$ then
\begin{gather}
    \label{eq:exotic_Y}
    \espr_{((p),(q-p))} \simeq \{ F_{\bullet} \in Y_q \mid e_p \in F_q\}.
\end{gather}
As for the $\Delta$-Springer variety, the notation $\espr_{((p),(q-p))}$ will refer to the exotic Springer fiber embedded in $Y_{q}$. We first deal with the extremal case $m=k$.

\begin{prop}\label{prop:exotic_as_Delta}
The $\Delta$-Springer variety $\dspr_{(n-k,k),k}$ and the exotic Springer fiber $\espr_{((n-2k),(k))}$ are isomorphic. 
\end{prop}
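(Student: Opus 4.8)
The plan is to realise both varieties as subvarieties of the \emph{same} Cautis--Kamnitzer space $Y_{n-k}$ and to verify that they are cut out by the same conditions. Specialising $m=k$ in \autoref{lem:vectors_last}, and using the convention $f_{k-m}=f_0=0$ (so that the corresponding condition is vacuous), the $\Delta$-Springer variety becomes
\[
\dspr_{(n-k,k),k} \cong \{ F_\bullet \in Y_{n-k} \mid F_{n-k} \subseteq E_{n-k,k},\ e_{n-2k} \in F_{n-k} \},
\]
whereas the identification~\eqref{eq:exotic_Y}, applied to the bipartition $((n-2k),(k))$ with $p=n-2k$ and $q=n-k$, gives
\[
\espr_{((n-2k),(k))} \cong \{ F_\bullet \in Y_{n-k} \mid e_{n-2k} \in F_{n-k} \}.
\]
The first set is visibly contained in the second, so one inclusion is free; the entire proposition reduces to showing that the extra condition $F_{n-k}\subseteq E_{n-k,k}$ is automatic, i.e.\ that any flag in $Y_{n-k}$ with $e_{n-2k}\in F_{n-k}$ already satisfies $F_{n-k}\subseteq E_{n-k,k}$. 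Granting this, the two descriptions define the same reduced closed subvariety of $Y_{n-k}$, and the two varieties are therefore isomorphic.

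To prove that the extra condition is automatic, I would argue entirely inside $F:=F_{n-k}$. The defining relations of $Y_{n-k}$ give $zF\subseteq F_{n-k-1}\subseteq F$, so $F$ is $z$-invariant, and by \autoref{y_remark} we have $F\subseteq\ker z^{n-k}=J_e\oplus J_f$, where $J_e=\spn(e_1,\ldots,e_{n-k})$ and $J_f=\spn(f_1,\ldots,f_{n-k})$ are the two Jordan blocks of $z$. The hypothesis $e_{n-2k}\in F$ together with $z$-invariance forces $\spn(e_1,\ldots,e_{n-2k})\subseteq F$, hence $\dim(F\cap J_e)\geq n-2k$.

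The key step is then a projection-and-count argument. Let $\pi_f\colon J_e\oplus J_f\to J_f$ be the projection along $J_e$; since $z$ preserves both summands, $\pi_f$ is $z$-equivariant, so $\pi_f(F)$ is a $z$-invariant subspace of the single Jordan block $J_f$ and is therefore of the form $\spn(f_1,\ldots,f_r)$. Applying the first isomorphism theorem to $\pi_f|_F$, whose kernel is $F\cap J_e$, gives $\dim(F\cap J_e)+r=\dim F=n-k$, and combined with $\dim(F\cap J_e)\geq n-2k$ this forces $r\leq k$. Thus every vector of $F$ has $J_e$-component in $\spn(e_1,\ldots,e_{n-k})$ and $J_f$-component in $\pi_f(F)\subseteq\spn(f_1,\ldots,f_k)$, so $F\subseteq E_{n-k,k}$, as required. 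I expect this dimension count to be the crux: the real content of the proposition is exactly that the containment $F_{n-k}\subseteq E_{n-k,k}$ comes for free (it is a genuine constraint as soon as $k<n/2$, since then $E_{n-k,k}\subsetneq\ker z^{n-k}$), while the matching of all remaining conditions is routine unwinding of \autoref{lem:vectors_last} and~\eqref{eq:exotic_Y}.
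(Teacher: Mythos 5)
Your proof is correct, and it follows the same high-level strategy as the paper: identify both varieties with subvarieties of $Y_{n-k}$ via \autoref{lem:vectors_last} (with the vacuous condition $f_0=0$) and \eqref{eq:exotic_Y}, and then check that the defining conditions agree. The only substantive difference lies in how the crux --- that $F_{n-k}\subseteq E_{n-k,k}$ is automatic for any flag in $Y_{n-k}$ with $e_{n-2k}\in F_{n-k}$ --- is established. The paper outsources this to ``an inductive argument as in [SW22, Proposition~18]'', i.e.\ an induction in the spirit of \autoref{prop:K_a_contained} that removes cups one at a time. You instead give a short, self-contained linear-algebra argument: $F=F_{n-k}$ is $z$-invariant and sits inside $\ker z^{n-k}=J_e\oplus J_f$; the hypothesis $e_{n-2k}\in F$ forces $\dim(F\cap J_e)\geq n-2k$; and since the projection of $F$ to the single Jordan block $J_f$ is $z$-invariant, hence of the form $\spn(f_1,\ldots,f_r)$, the rank--nullity count gives $r\leq k$ and therefore $F\subseteq E_{n-k,k}$. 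Each step checks out (in particular the classification of $z$-invariant subspaces of a single Jordan block and the $z$-equivariance of $\pi_f$). Your route buys a proof that is independent of the combinatorics of cup diagrams and of the external reference, at the cost of being special to this ``last space of the flag'' statement, whereas the inductive scheme the paper invokes is the one that also drives \autoref{prop:K_a_contained} and the componentwise analysis elsewhere in the paper.
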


\begin{proof}
    Using an inductive argument as in \cite[Proposition 18]{SW22}, one can prove that if a flag  $F_{\bullet}\in\espr_{((p),(q-p))}$ then $F_{q} \subseteq \mathrm{span}(e_1,\ldots,e_{q},f_1,\ldots,f_{q-p})$. Therefore $\dspr_{(n-k,k),k}$ and $\espr_{((n-2k),(k))}$ are isomorphic to subvarieties of $\{F_{\bullet} \in Y_{n-k}\mid F_{n-k}\subseteq E_{n-k,k}\}$, see \eqref{eq:delta_Y} and \eqref{eq:exotic_Y}. By \autoref{lem:vectors_last}, we have
    \begin{gather*}
        \dspr_{(n-k,k),k}\cong\left\{ F_{\bullet} \in Y_{n-k} \mid F_{n-k}\subseteq E_{n-k,k}, e_{n-2k},f_{0}\in F_{n-k}\right\}.
    \end{gather*}
    Since $f_{0}=0$, comparing with $\eqref{eq:exotic_Y}$ shows that both varieties are equal as subvarieties of $Y_{n-k}$.
\end{proof}

We turn back to the general case of $0\leq m \leq k$. Elements of the $\Delta$-Springer variety $\dspr_{(n-k,k),m}$ are described by flags in $Y_{n-m}$ and so are the elements of the exotic Springer fiber $\espr_{((n-m-k),(k))}$.

We quickly recall the diagrammatics describing the irreducible components of the exotic Springer fiber associated with the bipartition $((n-m-k),(k))$, see \cite{SW22} for more details. These irreducible components are indexed by one-boundary diagrams on $n-m$ points which are endpoints of rays, cups or half-cups: cups connect two points, and both rays and half-cups connect only one point. Doing so, we require that the diagram is crossingless. We will denote by $\mathbb{B}_{((n-m-k),(k))}$ this set of diagrams with a total number of cups and half-cups equal to $k$. 

There is a map $\mathbb{B}_{n-k,k,m}$ into $\mathbb{B}_{((n-m-k),(k))}$ by deleting the part of the diagram right of the cut line. Since there is no cup among the last $m$ points of an element of $\mathbb{B}_{n-k,k,m}$, this map is an injection: we can reconstruct the initial diagram by completing the half-cups and then complete the remaining points with rays.

\begin{exe}
The previous injection is illustrated as below:
\begin{gather*}
    \xy (0,0)*{
    \begin{tikzpicture}[baseline={(0,0)},scale=0.65]
    \draw[thick,densely dotted] (-.5,1) to (5.5,1);
    \draw[black,fill=black] (0,1) circle (2.0pt);
    \draw[black,fill=black] (1,1) circle (2.0pt);
    \draw[black,fill=black] (2,1) circle (2.0pt);
    \draw[black,fill=black] (3,1) circle (2.0pt);
    \draw[black,fill=black] (4,1) circle (2.0pt);
    \draw[black,fill=black] (5,1) circle (2.0pt);
    \draw[very thick] (1,1) to[out=-90,in=180] (1.5,.5) to[out=0,in=-90] (2,1);
    \draw[very thick] (0,1) to[out=-75,in=180] (2.5,-.4) to[out=0,in=-105] (5,1);
    \draw[very thick] (3,1) to[out=-90,in=180] (3.5,.5) to[out=0,in=-90] (4,1);
    \draw[cutline] (3.5,-1) to (3.5,1.15);
    \end{tikzpicture}
    \mapsto
    \begin{tikzpicture}[baseline={(0,0)},scale=0.65]
    \draw[thick,densely dotted] (-.5,1) to (3.5,1);
    \draw[black,fill=black] (0,1) circle (2.0pt);
    \draw[black,fill=black] (1,1) circle (2.0pt);
    \draw[black,fill=black] (2,1) circle (2.0pt);
    \draw[black,fill=black] (3,1) circle (2.0pt);
    \draw[very thick] (1,1) to[out=-90,in=180] (1.5,.5) to[out=0,in=-90] (2,1);
    \draw[very thick] (0,1) to[out=-75,in=180] (3.5,-0.2);
    \draw[very thick] (3,1) to[out=-90,in=180] (3.5,.5);
    \end{tikzpicture}
    }\endxy
\end{gather*}
The following diagram is not in the image of the injection $\mathbb{B}_{3,3,2}\rightarrow \mathbb{B}_{((1),(3))}$:
\begin{gather*}
    \begin{tikzpicture}[scale=0.65]
    \draw[thick,densely dotted] (-.5,1) to (3.5,1);
    \draw[black,fill=black] (0,1) circle (2.0pt);
    \draw[black,fill=black] (1,1) circle (2.0pt);
    \draw[black,fill=black] (2,1) circle (2.0pt);
    \draw[black,fill=black] (3,1) circle (2.0pt);
    \draw[very thick] (0,1) to (0,-0.4);
    \draw[very thick] (1,1) to[out=-90,in=180] (3.5,-.1);
    \draw[very thick] (2,1) to[out=-90,in=180] (3.5,.2);
    \draw[very thick] (3,1) to[out=-90,in=180] (3.5,.5);
    \end{tikzpicture}
\end{gather*}
Indeed, three points are needed to complete the half-cups and only two points are allowed on the right of the cut line of an element of 
$\mathbb{B}_{3,3,2}$.
\end{exe}

It easy to see that the image of this map is the subset of $\mathbb{B}_{((n-m-k),(k))}$ with at most $m$ half-cups. Using this identification, given $\ba\in\mathbb{B}_{n-k,k,m}$, we will denote by $K^{\mathrm{e}}_{\ba}$ the corresponding irreducible component of the exotic Springer fiber $\espr_{((n-m-k),(k))}$.

\begin{thm}\label{prop:comparison_exotic_delta}
    The (embedded) $\Delta$-Springer variety $\dspr_{(n-k,k),m}$ is equal to the closed subvariety $\bigcup_{\ba\in \mathbb{B}_{n-k,k,m}}K_{\ba}^{\mathrm{e}}$ of the (embedded) exotic Springer fiber $\espr_{((n-m-k),(k))}$. 
\end{thm}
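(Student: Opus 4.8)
The plan is to identify, component by component, the irreducible pieces on the two sides. By \autoref{thm:algebraic_main_result} the embedded $\Delta$-Springer variety decomposes as $\dspr_{(n-k,k),m}=\bigcup_{\ba\in\mathbb{B}_{n-k,k,m}}K_\ba$, where $K_\ba\subseteq Y_{n-m}$ is cut out by the relations of \autoref{def:Ka}; and \cite{SW22} describes each irreducible component $K^{\mathrm{e}}_\ba$ of $\espr_{((n-m-k),(k))}\subseteq Y_{n-m}$ by analogous relations attached to the cups, rays and half-cups of the diagram obtained from $\ba$ by deleting the part to the right of the cut line. So I would prove the equality of subvarieties $K_\ba=K^{\mathrm{e}}_\ba$ in $Y_{n-m}$ for every $\ba\in\mathbb{B}_{n-k,k,m}$; summing over $\ba$ then gives $\dspr_{(n-k,k),m}=\bigcup_\ba K^{\mathrm{e}}_\ba$, a finite union of closed subvarieties of the exotic fiber and hence a closed subvariety, as claimed.

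To establish $K_\ba=K^{\mathrm{e}}_\ba$, I would match the defining relations term by term under the injection $\mathbb{B}_{n-k,k,m}\hookrightarrow\mathbb{B}_{((n-m-k),(k))}$. A cup of $\ba$ with both endpoints $\leq n-m$ remains a cup and imposes $F_j=z^{-\frac{1}{2}(j-i+1)}F_{i-1}$ on both sides; a ray at $i\leq n-m$ remains a ray and imposes $F_i=F_{i-1}+\spn(e_{(i+\rho_\ba(i))/2})$ on both sides, the count $\rho_\ba(i)$ of rays to the left of $i$ being unchanged by the deletion. The only new feature is a cup of $\ba$ crossing the cut line: \autoref{def:Ka} imposes no relation on its left-endpoint space $F_i$, while in the exotic diagram it becomes a half-cup at $i$. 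Here $F_i/F_{i-1}$ should range freely over $\mathbb{P}(z^{-1}F_{i-1}/F_{i-1})\cong\mathbb{P}^1$ in both descriptions; granting this, the two systems of relations coincide verbatim and $K_\ba=K^{\mathrm{e}}_\ba$. The one point to verify carefully is therefore that the half-cup of \cite{SW22} imposes exactly this free $\mathbb{P}^1$ condition and no $f$-direction constraint, consistent with each half-cup contributing $1$ to the dimension $k$ of $K^{\mathrm{e}}_\ba$.

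I would keep a purely dimension-theoretic argument in reserve. \autoref{lem:vectors_last} and \eqref{eq:exotic_Y} give the inclusion $\dspr_{(n-k,k),m}\subseteq\espr_{((n-m-k),(k))}$, since the defining conditions of the $\Delta$-variety, namely $F_{n-m}\subseteq E_{n-k,k}$ and $e_{n-k-m},f_{k-m}\in F_{n-m}$, contain the single exotic condition $e_{n-m-k}\in F_{n-m}$ (note $n-k-m=n-m-k$). Both varieties are equidimensional of dimension $k$: for $\dspr_{(n-k,k),m}$ this is \autoref{thm:algebraic_main_result} together with the equidimensionality in \autoref{thm:results_from_GLW}, and for the exotic fiber it follows from \cite{Kato06} (its components being $k$-fold iterated $\mathbb{P}^1$-bundles by \cite{SW22}). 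Hence any irreducible component of $\dspr_{(n-k,k),m}$ is, being an irreducible closed subvariety of dimension $k$ inside the equidimensional $\espr_{((n-m-k),(k))}$, an irreducible component of the latter, so $\dspr_{(n-k,k),m}$ is a union of components of the exotic fiber. As it has exactly $|\mathbb{B}_{n-k,k,m}|$ components, and the $K^{\mathrm{e}}_\ba$ with $\ba\in\mathbb{B}_{n-k,k,m}$ are precisely the $|\mathbb{B}_{n-k,k,m}|$ components of $\espr_{((n-m-k),(k))}$ carrying at most $m$ half-cups, it suffices to prove the single inclusion $K^{\mathrm{e}}_\ba\subseteq\dspr_{(n-k,k),m}$ for each such $\ba$; a comparison of the number of components then forces equality. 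This inclusion amounts to checking $F_{n-m}\subseteq E_{n-k,k}$ and $f_{k-m}\in F_{n-m}$ on $K^{\mathrm{e}}_\ba$, which I would prove by induction on the number of cups of $\ba$ to the left of the cut line, mirroring \autoref{prop:K_a_contained}.

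In either route the main obstacle is the same: importing the \cite{SW22} description of the exotic components and pinning down the half-cup relation, confirming that it coincides with the free, no-relation behavior of a cut-line-crossing cup and that the ray indices agree after deletion. Once this is settled, both the direct identification $K_\ba=K^{\mathrm{e}}_\ba$ and the inductive inclusion are routine, and the extremal case $m=k$ of \autoref{prop:exotic_as_Delta}, where $\dspr_{(n-k,k),k}=\espr_{((n-2k),(k))}$ and every diagram carries at most $k$ half-cups, provides a consistency check on the conventions.
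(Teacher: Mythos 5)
Your proposal is correct, and its primary route --- establishing $K_{\ba}=K_{\ba}^{\mathrm{e}}$ by matching the defining relations of \autoref{def:Ka} against those of \cite[Theorem 15]{SW22}, with the half-cup/cut-line-crossing cup both imposing only the free $\mathbb{P}^1$ condition --- is precisely what the paper's proof does; the paper compresses this into a two-line citation of \cite[Proposition 14 and Theorem 15]{SW22} together with the observation that $e_{n-k-m}\in z^{m}(E_{n-k,k})$ gives the containment in the exotic fiber. Your reserve dimension-counting argument is a sound alternative but is not needed once the relations are seen to coincide.
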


\begin{proof}
    Since $e_{n-k-m}\in z^m(E_{n-k,k})$, the result follows from \cite[Proposition 14]{SW22} and the description of irreducible components of the exotic Springer fiber \cite[Theorem 15]{SW22}. 
\end{proof}

\section{A topological model and the action of the symmetric group}\label{sec:top-model-action-symmetric}

Recall that we have fixed a two-row partition $(n-k,k)$ and $0\leq m \leq k$. Using \autoref{prop:comparison_exotic_delta} and the topological model for the exotic Springer fiber of \cite{SW22}, we obtain a topological model of the $\Delta$-Springer variety $\dspr_{(n-k,k),m}$. We then deduce a skein theoretic description of the action of the symmetric group $S_{n-m}$ on the homology of the $\Delta$-Springer variety.

\subsection{A topological model}

Let $\mathbb{S}^2 \subseteq \mathbb{R}^3$ be the two dimensional standard unit sphere with north pole $p=(0,0,1)$. Given a $\Delta$-cup diagram $\ba\in\mathbb{B}_{n-k,k,m}$, define 
\[
S_\ba=\big\{(x_1,\ldots,x_{n-m})\in\left(\mathbb S^2\right)^{n-m}\mid x_j=-x_i\text{ if }i\CupConnect j,\text{ and }x_i=p\text{ if }\RayConnect\big\}.
\]
Note that $S_\ba$ is homeomorphic to a product of $2$-spheres. Each left endpoint of a cup in the diagram $\ba\in\mathbb{B}_{n-k,k,m}$ contributes exactly one sphere. 

\begin{defn}
The \emph{topological $\Delta$-Springer variety} $\mathcal{S}^{\Delta}_{(n-k,k),m}$ is defined as the union 
\[
\mathcal{S}^{\Delta}_{(n-k,k),m} := \bigcup_{\ba \in \mathbb{B}_{n-k,k,m}}S_\ba \subseteq\left(\mathbb S^2\right)^{n-m}.
\]
\end{defn}

The above definition of $S_{\ba}$ does not use the part of the diagram right of the cut line, and makes sense for any $\ba\in \mathbb{B}_{((n-m-k),(k))}$. 

\begin{prop}
    There exists a homeomorphism between the $\Delta$-Springer variety $\dspr_{(n-k,k),m}$ and the topological $\Delta$-Springer variety $\mathcal{S}^{\Delta}_{(n-k,k),m}$ such that the irreducible component $K_{\ba}$ of $\dspr_{(n-k,k),m}$ is sent to $S_{\ba}$. 
\end{prop}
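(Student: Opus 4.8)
The plan is to deduce the homeomorphism directly from the topological model for exotic Springer fibers built in \cite{SW22}, combined with the identification of the (embedded) $\Delta$-Springer variety inside the exotic Springer fiber provided by \autoref{prop:comparison_exotic_delta}. In \cite{SW22}, a homeomorphism $\Phi\colon \espr_{((n-m-k),(k))}\xrightarrow{\sim} \bigcup_{\ba\in\mathbb{B}_{((n-m-k),(k))}} S_\ba$ onto a union of products of $2$-spheres inside $(\mathbb{S}^2)^{n-m}$ is constructed, with the property that each irreducible component $K^{\mathrm{e}}_\ba$ is carried homeomorphically onto the sphere-product $S_\ba$. Since, by \autoref{prop:comparison_exotic_delta}, the embedded variety $\dspr_{(n-k,k),m}$ is precisely the closed subvariety $\bigcup_{\ba\in\mathbb{B}_{n-k,k,m}}K^{\mathrm{e}}_\ba$ of $\espr_{((n-m-k),(k))}$, it suffices to restrict $\Phi$ to this subvariety and to check that its image is the asserted union of spheres.

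First I would make the combinatorial bookkeeping explicit. Recall from \autoref{sec:delta-exotic} that deleting the part of a $\Delta$-cup diagram to the right of the cut line defines an injection $\mathbb{B}_{n-k,k,m}\hookrightarrow\mathbb{B}_{((n-m-k),(k))}$ whose image consists of those one-boundary diagrams with at most $m$ half-cups, and that $K_\ba$ is identified with $K^{\mathrm{e}}_\ba$ under this injection. On the topological side, the space $S_\ba$ depends only on the cups and rays to the left of the cut line, precisely as noted after the definition of $S_\ba$; the half-cups and the added rays of the associated one-boundary diagram contribute no further spheres. Hence the two descriptions of $S_\ba$, one for $\ba\in\mathbb{B}_{n-k,k,m}$ and one for its image in $\mathbb{B}_{((n-m-k),(k))}$, literally coincide, and therefore $\mathcal{S}^{\Delta}_{(n-k,k),m}=\bigcup_{\ba\in\mathbb{B}_{n-k,k,m}}S_\ba$ is exactly the image under $\Phi$ of $\bigcup_{\ba\in\mathbb{B}_{n-k,k,m}}K^{\mathrm{e}}_\ba$.

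Then I would conclude by the general fact that the restriction of a homeomorphism to a closed subset is a homeomorphism onto its image, endowed with the subspace topology. Restricting $\Phi$ to the closed subvariety $\bigcup_{\ba\in\mathbb{B}_{n-k,k,m}}K^{\mathrm{e}}_\ba=\dspr_{(n-k,k),m}$ thus yields a homeomorphism onto $\mathcal{S}^{\Delta}_{(n-k,k),m}$, and by construction it sends each irreducible component $K_\ba$ to $S_\ba$, which is the claim.

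The hard part will not be the argument itself, which is formal, but checking the compatibility of the various identifications: one must verify that the component $K^{\mathrm{e}}_\ba$ appearing in \autoref{prop:comparison_exotic_delta} (as a subvariety of $Y_{n-m}$) is the same subvariety whose image under $\Phi$ is $S_\ba$, i.e.\ that the homeomorphism of \cite{SW22} is set up with respect to the same embedding into $Y_{n-m}$ used throughout this section. One should also confirm that $\Phi$ restricts cleanly, so that no sphere-product labelled by a diagram outside $\mathbb{B}_{n-k,k,m}$ enters the image; this is immediate from $\Phi$ being a label-preserving bijection on components. Since both inputs (the cited homeomorphism and \autoref{prop:comparison_exotic_delta}) have already been established, the proof reduces to these bookkeeping checks and is short.
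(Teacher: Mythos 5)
Your proposal is correct and is essentially identical to the paper's own proof: both restrict the homeomorphism of \cite{SW22} between the exotic Springer fiber $\espr_{((n-m-k),(k))}$ and $\bigcup_{\ba\in\mathbb{B}_{((n-m-k),(k))}}S_\ba$ to the closed subvariety identified with $\dspr_{(n-k,k),m}$ via \autoref{prop:comparison_exotic_delta}, using that the components $K^{\mathrm{e}}_{\ba}$ are carried to $S_{\ba}$. The extra bookkeeping you spell out (the injection of diagram sets and the fact that $S_{\ba}$ depends only on the part of the diagram left of the cut line) is exactly what the paper leaves implicit.
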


\begin{proof}
In \cite{SW22}, a homeomorphism between the exotic Springer variety $\espr_{((n-m-k),(k))}$ and
\[
\mathcal{S}^{\mathrm{e}}_{((n-m-k),(k))} := \bigcup_{\ba \in \mathbb{B}_{((n-m-k),(k))}}S_\ba \subseteq\left(\mathbb S^2\right)^{n-m}
\]
is constructed and the irreducible component $K^{\mathrm{e}}_{\ba}$ corresponds to $S_{\ba}$. Therefore, using \autoref{prop:comparison_exotic_delta}, the restriction of the homeomorphism $\espr_{((n-m-k),(k))}\simeq\mathcal{S}^{\mathrm{e}}_{((n-m-k),(k))}$ to the closed subvarieties provides a homeomorphism $\dspr_{(n-k,k),m}\simeq\mathcal{S}^{\Delta}_{(n-k,k),m}$. The irreducible component $K_{\ba}$ of $\dspr_{(n-k,k),m}$ is then sent to $S_{\ba}$.
\end{proof}

Exactly as in \cite[Section 4.3]{SW22}, we describe the pairwise intersection of irreducible components in terms of circle diagrams.

For $\ba\in \mathbb{B}_{n-k,k,m}$, denote by $\overline{\ba}$ the reflection of $\ba$ along a horizontal line. For $\ba$ and $\mathbf{b}$ two $\Delta$-cup diagrams in $\mathbb{B}_{n-k,k,m}$, we call the concatenation $\overline{\ba}\mathbf{b}$ a \emph{circle diagram}. Such a diagram only contains circles and (open) lines. In a circle diagram, a line is called \emph{non-propagating} if it is entirely on the left of the cut line and its endpoints are on the same side of the horizontal line. Otherwise, a line is called \emph{propagating}. 

\begin{prop}
    Let $\ba,\mathbf{b}\in\mathbb{B}_{n-k,k,m}$. The intersection $S_{\ba}\cap S_{\mathbf{b}}$ is nonempty if and only if all the lines in the circle diagram $\overline{\ba}\mathbf{b}$ are propagating. Moreover, $S_{\ba}\cap S_{\mathbf{b}}$ is homeomorphic to $(\mathbb{S}^{2})^{\ell}$, where $\ell$ counts the number of circles plus the number of open lines with both endpoints on the cut line, in the diagram obtained from $\overline{\ba\mathbf{b}}$ by erasing everything on the right of the cut line.
\end{prop}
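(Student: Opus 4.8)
The plan is to realize $S_\ba \cap S_\mathbf{b}$ as the solution set, inside $(\mathbb{S}^2)^{n-m}$, of the combined system of constraints coming from $\ba$ and $\mathbf{b}$, and then to read this set off from the connected components of the circle diagram $\overline{\ba}\mathbf{b}$. By definition a tuple $(x_1,\ldots,x_{n-m})$ lies in $S_\ba \cap S_\mathbf{b}$ exactly when it satisfies $x_j=-x_i$ for every cup $i\CupConnect j$ with $j\leq n-m$ and $x_i=p$ for every ray, in both $\ba$ and $\mathbf{b}$. Since the definition of $S_\ba$ ignores the part of the diagram to the right of the cut line, a cup whose right endpoint lies to the right of the cut line imposes no relation; in $\overline{\ba}\mathbf{b}$ such a cup appears as a dangling half-cup. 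The first step is to note that each vertex $i\leq n-m$ carries exactly one cup-endpoint or ray in $\ba$ and one in $\mathbf{b}$, so the graph on $\{1,\ldots,n-m\}$ whose edges are the genuine arcs (cups of $\mathbf{b}$ and caps of $\ba$ joining two such vertices) has all degrees at most two. Its components are therefore precisely the circles and lines of the circle diagram, the rays and half-cups serving as terminal ends. Consequently the constraint system decouples over components, and $S_\ba \cap S_\mathbf{b}$ is the product of the solution sets attached to the individual components.

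Next I would analyze each component in isolation, tracking how the antipodal relations propagate. Along any path the coordinates are all $\pm x_{i_0}$ for a chosen base coordinate $x_{i_0}$, the sign flipping across each arc, so the decisive bookkeeping is the parity of the number $s$ of arcs traversed. For a circle the arcs alternate between cups of $\mathbf{b}$ and caps of $\ba$, forcing even length; the loop closes consistently and leaves one free base coordinate, contributing a factor $\mathbb{S}^2$. For a line I would split into cases according to whether each of its two ends is a ray (pinned to $p$) or a dangling half-cup (free). A line with at least one free end is always solvable; if it also has a pinned end it is rigid and contributes a single point, while if both ends are free it contributes one free $\mathbb{S}^2$ factor. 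The delicate case is a line with both ends pinned to $p$, where consistency demands $(-1)^s p = p$, i.e.\ $s$ even.

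The heart of the argument, and the step I expect to need the most care, is matching this parity with the combinatorial notion of propagation. For a line with both ends rays, the first and last arcs lie in the diagram \emph{opposite} to the one carrying the corresponding ray; since arcs alternate top and bottom, the first and last arcs are of the same type exactly when $s$ is odd, which happens precisely when the two rays sit on the same side of the horizontal line. Thus such a line is inconsistent if and only if its two ray ends are on the same side and it is entirely to the left of the cut line, that is, if and only if it is non-propagating; rays on opposite sides give $s$ even and a (rigid, consistent) solution. Every line touching a half-cup crosses the cut line, hence is propagating, and is always solvable by the free-end analysis above. This establishes the first claim: $S_\ba \cap S_\mathbf{b}\neq\emptyset$ if and only if every line of $\overline{\ba}\mathbf{b}$ is propagating.

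Finally I would tally the free $\mathbb{S}^2$ parameters to pin down the homeomorphism type when the intersection is nonempty. By the decoupling, the only components carrying a free base coordinate are the circles and the lines with both ends dangling to the right of the cut line; each contributes exactly one sphere, whereas ray-pinned lines contribute points. Hence $S_\ba \cap S_\mathbf{b}\cong(\mathbb{S}^2)^\ell$ with $\ell$ the number of circles plus the number of lines crossing the cut line with both endpoints to its right, as asserted. As the preceding remark signals, this is the exact analogue of \cite[Section 4.3]{SW22}; the only genuinely new ingredient is the presence of half-cups, handled uniformly by treating a dangling end as an unconstrained endpoint.
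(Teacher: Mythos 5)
Your proof is correct, but it takes a genuinely different route from the paper: the paper's entire argument is a one-line citation to \cite[Theorem 29]{SW22}, using the identification of the topological $\Delta$-Springer variety with a closed union of components of the topological exotic Springer fiber, whereas you reprove the statement from scratch by decoupling the antipodal/pinning constraints over the components of the arc graph and doing the sign bookkeeping by hand. Your parity analysis for ray--ray lines (consistency $\Leftrightarrow$ the number $s$ of arcs is even $\Leftrightarrow$ the two rays sit on opposite sides of the horizontal line) is exactly the content of the propagating/non-propagating dichotomy, and your case analysis of free versus pinned ends correctly tallies the $\mathbb{S}^2$-factors. What your version buys is self-containedness; what the citation buys is brevity, since the half-cup formalism of \cite{SW22} is literally the same constraint system.

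One step should be made explicit, because as written it is the only place the argument could be misread into something false. You assert that the components of the graph on $\{1,\ldots,n-m\}$ whose edges are the arcs with both endpoints $\leq n-m$ ``are precisely the circles and lines of the circle diagram.'' This holds only if $\overline{\ba}\mathbf{b}$ is read as the concatenation of the \emph{truncated} one-boundary diagrams, i.e.\ with every cup crossing the cut line cut off into a dangling half-cup, as in \cite{SW22}. If one instead glues the full diagrams on all $n$ vertices (which is what ``concatenation'' and the displayed picture literally suggest), a single closed component can thread through the region right of the cut line several times, and then it corresponds to \emph{several} components of your constraint graph, hence several independent $\mathbb{S}^2$-factors. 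Concretely, for $n=8$, $k=4$, $m=3$, take $\ba$ with cups $1\CupConnect 8$, $2\CupConnect 7$, $3\CupConnect 6$, $4\CupConnect 5$ and $\mathbf{b}$ with cups $1\CupConnect 2$, $3\CupConnect 8$, $4\CupConnect 7$, $5\CupConnect 6$; both lie in $\mathbb{B}_{4,4,3}$, one computes $S_{\ba}\cap S_{\mathbf{b}}=\{x_2=-x_1,\ x_5=-x_4\}\cong(\mathbb{S}^2)^3$, yet the fully glued diagram $\overline{\ba}\mathbf{b}$ is a single circle through all eight vertices. So your decomposition, and the resulting value of $\ell$, is the correct one, but you should state explicitly that the circles and the both-ends-dangling lines you count are those of the truncated diagram --- equivalently, that a closed component of the glued diagram meeting the region right of the cut line $t$ times contributes $t$ spheres, not one. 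With that clarification the proof is complete.
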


\begin{proof}
    This follows from \cite[Theorem 29]{SW22}.
\end{proof}

\begin{exe}
Below we give examples of cup diagrams $\ba$, $\mathbf{b}$ and a circle diagram $\overline{\ba}\mathbf{b}$. 
\[
\ba =
\xy (0,0)*{
\begin{tikzpicture}[scale=0.535,yscale=-1]
\draw[thick,densely dotted] (-1.5,1) to (5.5,1);
\draw[black,fill=black] (-1,1) circle (2.5pt); 
\draw[black,fill=black] (0,1) circle (2.5pt);
\draw[black,fill=black] (1,1) circle (2.5pt);
\draw[black,fill=black] (2,1) circle (2.5pt);
\draw[black,fill=black] (3,1) circle (2.5pt);
\draw[black,fill=black] (4,1) circle (2.5pt);
\draw[black,fill=black] (5,1) circle (2.5pt);
\draw[very thick] (-1,1) to[out=80,in=180] (0.5,2.1) to[out=0,in=100] (2,1);
\draw[very thick] (0,1) to[out=80,in=180] (0.5,1.5) to[out=0,in=100] (1,1);
\draw[very thick] (3,1) to[out=80,in=180] (3.5,1.5) to[out=0,in=100] (4,1);
\draw[very thick] (5,1) to (5,3);
\draw[cutline] (3.5,.85) to (3.5,3);
\end{tikzpicture}
}\endxy
\mspace{45mu}
\mathbf{b} = \xy (0,0)*{
\begin{tikzpicture}[scale=0.535]
\draw[thick,densely dotted] (-1.5,1) to (5.5,1);
\draw[black,fill=black] (-1,1) circle (2.5pt);
\draw[black,fill=black] (0,1) circle (2.5pt);
\draw[black,fill=black] (1,1) circle (2.5pt);
\draw[black,fill=black] (2,1) circle (2.5pt);
\draw[black,fill=black] (3,1) circle (2.5pt);
\draw[black,fill=black] (4,1) circle (2.5pt);
\draw[black,fill=black] (5,1) circle (2.5pt);
\draw[very thick] (-1,1) to[out=-80,in=180] (-.5,.5) to[out=0,in=-100] (0,1);
\draw[very thick] (1,1) to[out=-80,in=180] (2.5,-.1) to[out=0,in=-100] (4,1);
\draw[very thick] (2,1) to[out=-80,in=180] (2.5,.5) to[out=0,in=-100] (3,1);
\draw[very thick] (5,-1) to (5,1);
\draw[cutline] (3.5,-1) to (3.5,1.15);
\end{tikzpicture}
}\endxy
\mspace{45mu}
\overline{\ba}\mathbf{b} = \xy (0,0)*{
\begin{tikzpicture}[scale=0.535]
\begin{scope}[shift={(0,0)}]
\draw[thick,densely dotted] (-1.5,1) to (5.5,1);
\draw[black,fill=black] (-1,1) circle (2.5pt); 
\draw[black,fill=black] (0,1) circle (2.5pt);
\draw[black,fill=black] (1,1) circle (2.5pt);
\draw[black,fill=black] (2,1) circle (2.5pt);
\draw[black,fill=black] (3,1) circle (2.5pt);
\draw[black,fill=black] (4,1) circle (2.5pt);
\draw[black,fill=black] (5,1) circle (2.5pt);
\draw[very thick] (-1,1) to[out=-80,in=180] (-.5,.5) to[out=0,in=-100] (0,1);
\draw[very thick] (1,1) to[out=-80,in=180] (2.5,-.1) to[out=0,in=-100] (4,1);
\draw[very thick] (2,1) to[out=-80,in=180] (2.5,.5) to[out=0,in=-100] (3,1);
\draw[very thick] (5,-1) to (5,1);
\draw[cutline] (3.5,-1) to (3.5,3);
\end{scope}
\begin{scope}[shift={(0,0)}]
\draw[very thick] (-1,1) to[out=80,in=180] (0.5,2.1) to[out=0,in=100] (2,1);
\draw[very thick] (0,1) to[out=80,in=180] (0.5,1.5) to[out=0,in=100] (1,1);
\draw[very thick] (3,1) to[out=80,in=180] (3.5,1.5) to[out=0,in=100] (4,1);
\draw[very thick] (5,1) to (5,3);
\end{scope}
\end{tikzpicture}
}\endxy
\]
Therefore the intersection $S_{\ba}\cap S_{\mathbf{b}}$ is a $2$-sphere.
\end{exe}

\subsection{Diagrammatic description of the homology}

The topological model constructed in the previous section naturally embeds in $(\mathbb{S}^2)^{n-m}$ and this inclusion induces a map 
\begin{gather}
H_{*}(\mathcal{S}^{\Delta}_{(n-k,k),m}) \hookrightarrow H_{*}((\mathbb{S}^2)^{n-m}).
\end{gather}

We describe explicitly this map using line diagrams and show that this map is injective and the natural action of the symmetric group $S_{n-m}$ on $H_{*}((\mathbb{S}^2)^{n-m})$ stabilizes the subspace $H_{*}(\mathcal{S}^{\Delta}_{\lambda,m})$.

The decomposition of the sphere $\mathbb{S}^2$ as $\{p\}\cup(\mathbb{S}^2\setminus\{p\})$ defines a CW-structure on $\mathbb{S}^2$ with one $0$-cell and one $2$-cell. We fix this CW-structure and then equip $(\mathbb{S}^2)^{n-m}$ with the Cartesian product CW-structure. The cells of this CW-structure are indexed by the subsets $U$ of $\{1,\ldots,n-m\}$ by mapping $U$ to the cell $C_U$ which is defined by choosing the $0$-cell in the $i$-th component if $i\not\in U$ and the $2$-cell if $i\in U$. We will denote by $l_U$ the homology class of the cell $C_U$ in $H_{*}((\mathbb{S}^2)^{n-m})$ so that $(l_U)_{U}$ is a basis of $H_{*}((\mathbb{S}^2)^{n-m})$, the degree of $l_U$ is given by twice the cardinality of $U$.

Following \cite[Section 6.1]{SW22}, we will write the elements $l_U$ as line diagrams: we attach $n-m$ vertical lines, which are decorated by (empty) dots on the lines with endpoints not in $U$.

\begin{exe}
    If $U=\{1,3\}\subseteq \{1,2,3,4\}$, the corresponding line diagram is
    \begin{gather*}
        l_U =
        \begin{tikzpicture}[baseline={(0,-.5)},scale=.8]
        \draw[thick] (0,0) -- +(0,-1);
        \draw[thick] (.5,0) -- +(0,-1);
        \draw[thick] (1,0) -- +(0,-1);
        \draw[thick] (1.5,0) -- +(0,-1);
        \draw (0.5,-.5) circle(3pt);
        \draw (1.5,-.5) circle(3pt);
        \end{tikzpicture}\ .
    \end{gather*}
\end{exe}

Given a $\Delta$-weight $\alpha$ of type $(n-k,k,m)$, we define an element $L_{\alpha}\in H_{*}((\mathbb{S}^2)^{n-m})$ as follows. Denote by $\mathcal{U}_{\alpha}$ the set of all $U\subseteq \{1,\ldots,n-m\}$ containing one endpoint of each cup of $C(\alpha)$ left of the cut line and the left endpoints of cups which cross the cut line. We set
\begin{gather*}
    L_{\alpha}=\sum_{U\in \mathcal{U}_{\alpha}}(-1)^{\Lambda_{\alpha}(U)}l_U,
\end{gather*}
where $\Lambda_{\alpha}(U)$ is the number of right endpoints of cups of $C(\alpha)$ in $U$. Note that $L_{\alpha}$ is in degree $2d$, where $d$ is the number of cups in $C(\alpha)$.

\begin{exe}\label{ex:line-diagram}
    Consider the $\Delta$-weight $\alpha=\down\up\up\down\vert\!\up\down$. The associated diagram is
    \begin{gather*}
            C(\alpha) = \xy (0,0)*{
            \begin{tikzpicture}[scale=0.65]
            \draw[thick,densely dotted] (-.5,1) to (5.5,1);
            \draw[black,fill=black] (0,1) circle (2.0pt);
            \draw[black,fill=black] (1,1) circle (2.0pt);
            \draw[black,fill=black] (2,1) circle (2.0pt);
            \draw[black,fill=black] (3,1) circle (2.0pt);
            \draw[black,fill=black] (4,1) circle (2.0pt);
            \draw[black,fill=black] (5,1) circle (2.0pt);
            \draw[very thick] (0,1) to[out=-90,in=180] (0.5,.5) to[out=0,in=-90] (1,1);
            \draw[very thick] (3,1) to[out=-90,in=180] (3.5,.5) to[out=0,in=-90] (4,1);
            \draw[very thick] (2,1) -- (2,-1);
            \draw[very thick] (5,1) -- (5,-1);
            \draw[cutline] (3.5,-1) to (3.5,1.15);
            \end{tikzpicture}
            }\endxy 
        \end{gather*}
        We then have
        \begin{gather*}
        L_{\alpha} = 
        \begin{tikzpicture}[baseline={(0,-.5)},scale=.8]
        \draw[thick] (0,0) -- +(0,-1);
        \draw[thick] (.5,0) -- +(0,-1);
        \draw[thick] (1,0) -- +(0,-1);
        \draw[thick] (1.5,0) -- +(0,-1);
        \draw (.5,-.5) circle(3pt);
        \draw (1,-.5) circle(3pt);
        \end{tikzpicture}
        -
        \begin{tikzpicture}[baseline={(0,-.5)},scale=.8]
        \draw[thick] (0,0) -- +(0,-1);
        \draw[thick] (.5,0) -- +(0,-1);
        \draw[thick] (1,0) -- +(0,-1);
        \draw[thick] (1.5,0) -- +(0,-1);
        \draw (0,-.5) circle(3pt);
        \draw (1,-.5) circle(3pt);
        \end{tikzpicture}\, ,
        \end{gather*}
        the first term corresponding to $U=\{1,4\}$ and the second one to $U=\{2,4\}$.
\end{exe}

\begin{prop}
The inclusion $\mathcal{S}^{\Delta}_{(n-k,k),m}\hookrightarrow (\mathbb{S}^{2})^{n-m}$ induces an embedding 
\[H_{*}(\mathcal{S}^{\Delta}_{(n-k,k),m}) \hookrightarrow H_{*}((\mathbb{S}^2)^{n-m})\] 
which sends the homology class of $K_{\alpha}$ to $L_{\alpha}$.
\end{prop}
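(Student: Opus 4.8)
The plan is to establish the two assertions separately: that the induced map sends $[K_\alpha]$ to $L_\alpha$, and that it is injective. Since the inclusion respects the homological grading and $H_{*}(\mathcal{S}^{\Delta}_{(n-k,k),m})$ carries the cell basis $\{[K_\alpha]\}$ supplied by the affine paving of \autoref{thm:paving}, it suffices to compute the image of each fundamental class $[K_\alpha]$ and then to verify that the resulting elements are linearly independent. Under the homeomorphism of the previous subsection (as for generalized components in \cite{SW22}), the closure of the attracting cell $K_\alpha$ is identified with the product of spheres $S_{C(\alpha)}$, so the image of $[K_\alpha]$ is the pushforward of the fundamental class $[S_{C(\alpha)}]$ along $S_{C(\alpha)}\hookrightarrow(\mathbb{S}^2)^{n-m}$.

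First I would carry out this pushforward using the Künneth decomposition $H_{*}((\mathbb{S}^2)^{n-m})\cong\bigotimes_{i=1}^{n-m}H_{*}(\mathbb{S}^2)$, under which $l_U$ is the tensor with the fundamental class $[\mathbb{S}^2]$ in the factors $i\in U$ and the point class $[\mathrm{pt}]$ elsewhere. As $S_{C(\alpha)}$ is a product of spheres, one per cup of $C(\alpha)$, its fundamental class is the cross product of the fundamental classes of the factors, so it is enough to analyse one cup at a time. A ray at $i$ contributes $[\mathrm{pt}]$ in factor $i$. A cup $i\CupConnect j$ lying entirely to the left of the cut line is embedded by $x\mapsto(x,-x)$ into factors $i$ and $j$; since the antipodal map on $\mathbb{S}^2$ has degree $-1$ and $\Delta_{*}[\mathbb{S}^2]=[\mathbb{S}^2]\otimes[\mathrm{pt}]+[\mathrm{pt}]\otimes[\mathbb{S}^2]$, this cup contributes $l_{\{i\}}-l_{\{j\}}$. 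A cup crossing the cut line has only its left endpoint $i\leq n-m$ as a coordinate, the relation $x_j=-x_i$ being vacuous, so it contributes $l_{\{i\}}$. Multiplying these local contributions and expanding, the surviving subsets $U$ are exactly those containing one endpoint of each left cup together with the left endpoint of each crossing cup, that is $U\in\mathcal{U}_\alpha$, and the sign of $l_U$ is $(-1)^{\Lambda_\alpha(U)}$, the exponent counting the chosen right endpoints. This gives precisely $[K_\alpha]\mapsto L_\alpha$.

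For injectivity I would argue that the family $\{L_\alpha\}$ is linearly independent by reducing to the exotic case already treated in \cite{SW22}. By \autoref{prop:comparison_exotic_delta} and the homeomorphisms of this section, $\mathcal{S}^{\Delta}_{(n-k,k),m}$ is the closed union of those pieces $S_\ba$ of $\mathcal{S}^{\mathrm{e}}_{((n-m-k),(k))}$ with $\ba$ in the image of the injection $\mathbb{B}_{n-k,k,m}\hookrightarrow\mathbb{B}_{((n-m-k),(k))}$, and this inclusion is compatible with the affine pavings. Because an affine paving has only even-dimensional cells, the cellular differentials vanish and homology is free on the cell classes; hence the inclusion $\mathcal{S}^{\Delta}_{(n-k,k),m}\hookrightarrow\mathcal{S}^{\mathrm{e}}_{((n-m-k),(k))}$ induces an injection on homology sending $[K_\alpha]$ to $[K^{\mathrm{e}}_\alpha]$. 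Crucially, the defining condition for $S_\ba$ uses only the part of the diagram to the left of the cut line, so our $L_\alpha$ coincides with the class attached to $[K^{\mathrm{e}}_\alpha]$ in \cite[Section 6.1]{SW22}; thus $\{L_\alpha\}$ is a sub-family of the linearly independent family that realizes the embedding $H_{*}(\mathcal{S}^{\mathrm{e}}_{((n-m-k),(k))})\hookrightarrow H_{*}((\mathbb{S}^2)^{n-m})$, and is therefore itself linearly independent. Composing the two injections yields the desired embedding with $[K_\alpha]\mapsto L_\alpha$.

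I expect the main obstacle to be the sign bookkeeping in the pushforward: tracking the degree $-1$ of the antipodal map through each cup and correctly handling the cups crossing the cut line, whose right endpoint is not a coordinate of $(\mathbb{S}^2)^{n-m}$, so that the expansion lands exactly on $\mathcal{U}_\alpha$ with sign $(-1)^{\Lambda_\alpha(U)}$. The second delicate point is to make precise the identification of $L_\alpha$ with the class associated to $[K^{\mathrm{e}}_\alpha]$ in \cite{SW22}, which legitimizes the reduction to the exotic case; once both are settled, linear independence, and hence injectivity, is immediate.
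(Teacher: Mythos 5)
Your argument is correct and follows essentially the same route as the paper, whose proof is simply a pointer to \cite[Proposition 57]{SW22}: computing the pushforward of the fundamental class of $S_{C(\alpha)}$ factorwise via the K\"unneth decomposition and the degree $-1$ of the antipodal map is exactly the argument being cited, and your sign bookkeeping (left endpoint $+$, right endpoint $-$, crossing cups contributing only $l_{\{i\}}$) matches the definition of $L_\alpha$. Your injectivity step, reducing to the exotic case through the closed, cell-compatible embedding of \autoref{prop:comparison_exotic_delta}, is a legitimate (and slightly tidier) packaging of the linear-independence argument in \cite{SW22}, so no gap remains.
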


\begin{proof}
  The proof is similar to \cite[Proposition 57]{SW22}. In order to sketch the idea, consider the commutative diagram
  \[
    \begin{xy}
      \xymatrix{
        \bigoplus\limits_{{\bf a}\in\mathbb{B}_{n-k,k,m}} H_*(S_{\bf a}) \ar[r] \ar@/^1.5pc/[rr] & H_*(\mathcal{S}^{\Delta}_{(n-k,k),m}) \ar[r] & H_*((\mathbb S^2)^{n-m}).
      }
    \end{xy}
  \]
  All maps are induced by the natural inclusions. Since $K_{\alpha}\subset S_{\bf a}$ for some ${\bf a}\in\mathbb{B}_{n-k,k,m}$, one can apply the upper map to the homology class of $K_{\alpha}$ and check that its image in $H_*((\mathbb S^2)^{n-m})$ is the line diagram sum $L_{\alpha}$. The left horizontal map sends the homology class of $K_{\alpha}$ in $H_*(S_{\bf a})$ to the homology class of $K_{\alpha}$ in $H_*(\mathcal{S}^{\Delta}_{(n-k,k),m})$. Hence, by the commutativity of the diagram, the homology class of $K_{\alpha}$ in $H_*(\mathcal{S}^{\Delta}_{(n-k,k),m})$ gets sent to $L_{\alpha}$. Since the homology classes of the $K_{\alpha}$ form a basis of $H_*(\mathcal{S}^{\Delta}_{(n-k,k),m})$, and the line diagram sums $L_{\alpha}$, where $\alpha$ varies over all $\Delta$-weights of type $(n-k,k,m)$, can be shown to be linearly independent, it follows that the map $H_{*}(\mathcal{S}^{\Delta}_{(n-k,k),m}) \hookrightarrow H_{*}((\mathbb{S}^2)^{n-m})$ is injective.
\end{proof}

\subsection{Action of the symmetric group}

The symmetric group $S_{n-m}$ acts naturally on $(\mathbb{S}^2)^{n-m}$ by permutation. This induces an action at the level of the homology, which is again described by permutations of the lines in a line diagram. From now on, we identify the homology $H_{*}(\mathcal{S}^{\Delta}_{(n-k,k),m})$ with its image in $H_{*}((\mathbb{S}^{2})^{n-m})$.

We give a skein theoretic description of the action of the symmetric group, thereby answering \cite[Question 8.6]{GLW21}. We write $s_i\in S_{n-m}$ for the transposition $(i, i+1)$. To state the explicit action, we will identify the basis element $L_{\alpha}$ of the homology corresponding to the $\Delta$-weight $\alpha$ with the corresponding $\Delta$-cup diagram $C(\alpha)$.

\begin{prop}\label{prop:action_symmetric}
    The subspace $H_{*}(\mathcal{S}^{\Delta}_{(n-k,k),m})$ is stable under the action of the symmetric group. Moreover, the action of $s_i$ on $L_{\alpha}$ is obtained by stacking the diagram
    \begin{gather*}
        \begin{tikzpicture}
            \draw[thick] (0,0) -- +(0,-1);
            \draw[thick] (.75,0) -- +(0,-1);
            \draw[thick] (1.25,0) -- +(.5,-1);
            \draw[thick] (1.75,0) -- +(-.5,-1);
            \draw[thick] (2.25,0) -- +(0,-1);
            \draw[thick] (3,0) -- +(0,-1);
            \node at (.4,-.5) {$\cdots$};
            \node at (2.65,-.5) {$\cdots$};
            \node[above] at (1.20,0.03) {\tiny{$i$}};
            \node[above] at (1.75,0) {\tiny{$i+1$}};
        \end{tikzpicture}
    \end{gather*}
    on top of $C(\alpha)$ and using the following relations
    \begin{itemize}
        \item isotopies,
        \item skein relation: $\begin{tikzpicture}[baseline={(0,-.5)}, scale=.7]
            \draw[thick] (0,0) -- +(.5,-1);
            \draw[thick] (.5,0) -- +(-.5,-1);
        \end{tikzpicture}
        =
        \begin{tikzpicture}[baseline={(0,-.5)}, scale=.7]
            \draw[thick] (0,0) to[out=-90,in=180] (.25,-.25) to[out=0,in=-90] (.5,0);
            \draw[thick] (0,-1) to[out=90,in=180] (.25,-.75) to[out=0,in=90] (.5,-1);
        \end{tikzpicture}
        +
        \begin{tikzpicture}[baseline={(0,-.5)}, scale=.7]
            \draw[thick] (0,0) -- +(0,-1);
            \draw[thick] (.5,0) -- +(0,-1);
        \end{tikzpicture},
        $
        \item bubble removal:
        $
        \begin{tikzpicture}[baseline={(0,-.1)}, scale=.7]
            \draw[thick] (0,0) circle (.4);
        \end{tikzpicture}
        = -2,
        $
        \item cap removal: 
        $
        \begin{tikzpicture}[baseline={(0,-.5)}, scale=.7]
            \draw[thick] (0,-1) to (0,-.5) to[out=90,in=180] (.25,-.25) to[out=0,in=90] (.5,-.5) to (.5,-1);
        \end{tikzpicture}
        =
        0,$
        \item if these operations create diagrams which are not allowed (\emph{i.e.} with an entire cup right of the cut line), then these diagrams are set to zero.
    \end{itemize}
\end{prop}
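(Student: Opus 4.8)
The plan is to deduce the statement from the corresponding result for the exotic Springer fiber established in \cite{SW22}, transported through the identification of homologies provided by \autoref{prop:comparison_exotic_delta}. I would first record how $S_{n-m}$ acts on the ambient homology: since $S_{n-m}$ permutes the factors of $(\mathbb{S}^2)^{n-m}$ and each cell $C_U$ is a product of the chosen cell in each factor, the induced action on the basis is simply $s_i\cdot l_U = l_{s_i(U)}$, where $s_i(U)$ is obtained from $U$ by interchanging $i$ and $i+1$. Hence the action on $H_{*}((\mathbb{S}^2)^{n-m})$ is completely explicit, and the whole problem reduces to re-expressing $s_i\cdot L_\alpha = \sum_{U\in\mathcal U_\alpha}(-1)^{\Lambda_\alpha(U)}l_{s_i(U)}$ in terms of the classes $L_{\mathbf c}$. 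The role of the skein relations is that this re-expansion is governed by the Frobenius-algebra structure of $H_{*}(\mathbb{S}^2)$ (comultiplication for cups, multiplication-then-counit for caps, the antipode $x\mapsto -x$ producing the signs, and $\chi(\mathbb{S}^2)=2$ producing the bubble value $-2$); for the generators $s_1,\dots,s_{n-m-1}$ this is exactly the computation carried out for $\mathcal{S}^{\mathrm e}_{((n-m-k),(k))}$ in \cite{SW22}, which I would invoke verbatim for the isotopy, skein, bubble and cap relations.

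Beyond the exotic case, two points remain: the stability of the subspace $H_{*}(\mathcal{S}^{\Delta}_{(n-k,k),m})$, and the extra rule discarding diagrams with a cup entirely to the right of the cut line. Since $s_i$ only involves the positions $i,i+1\le n-m$ lying to the left of the cut line, the skein resolution of the stacked crossing proceeds exactly as in the exotic setting; the only new phenomenon is that the cup produced at the bottom of the skein relation may join the left endpoints of two distinct cups of $C(\alpha)$ that cross the cut line, fusing them into a single arc whose two endpoints both lie among the $m$ rightmost vertices. Such a diagram is not a $\Delta$-cup diagram, and I claim its contribution to $s_i\cdot L_\alpha$ vanishes. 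This single point simultaneously forces the ``set to zero'' rule and yields stability.

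To justify the vanishing I would argue by degree. By construction $L_{\mathbf c}$ records, through $\mathcal U_{\mathbf c}$, only the endpoints of cups lying to the left of, or crossing, the cut line; a cup with both endpoints to the right contributes no coordinate in $(\mathbb{S}^2)^{n-m}$, so the associated combination of classes $l_U$ effectively drops a cup and lands in homological degree strictly below that of $L_\alpha$. As $s_i$ preserves the homological grading, these terms cannot survive, and assigning them the value zero is forced. Combined with the fact that the classes $\{L_{\mathbf c}\}_{\mathbf c\in\mathbb{B}_{n-k,k,m}}$ form a basis of $H_{*}(\mathcal{S}^{\Delta}_{(n-k,k),m})$ (they are the images of the basis $\{[K_{\mathbf c}]\}$ under the embedding of the preceding proposition), the resulting identity $s_i\cdot L_\alpha = \sum_{\mathbf c} c_{\mathbf c} L_{\mathbf c}$, with all surviving $\mathbf c\in\mathbb{B}_{n-k,k,m}$, shows at once both that the subspace is stable and that the action is computed by the stated skein calculus. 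Matching the surviving coefficients $c_{\mathbf c}$ and their signs against the isotopy/skein/bubble/cap output is then a finite local computation, identical to the one in \cite{SW22} once the cut line is inserted.

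The step I expect to be the main obstacle is precisely this rigorous justification of the final relation, that every term carrying a cup with both endpoints to the right of the cut line must be discarded. The degree argument explains \emph{why} such terms cannot contribute, but converting it into a clean term-by-term matching requires carefully tracking the signs $(-1)^{\Lambda_\alpha(U)}$ as cups are fused and cut off, and verifying that no surviving diagram accidentally acquires more than $m$ crossing cups. Carrying out this sign and support bookkeeping uniformly, rather than case-by-case on the local configuration at positions $i,i+1$, is where the real work lies.
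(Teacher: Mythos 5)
Your overall strategy---reduce to the local configuration at positions $i,i+1$, match the standard Temperley--Lieb computations against \cite{russell} and \cite{SW22}, and isolate the single new configuration in which both $i$ and $i+1$ are left endpoints of cups crossing the cut line---is exactly the ``direct computation along the lines of \cite{russell}'' that the paper's proof consists of, and your identification of the fused diagram with an arc entirely to the right of the cut line as the only new phenomenon is correct. The genuine gap is the degree argument you offer for discarding that diagram: it is circular. It presupposes that the naive skein expansion (retaining the disallowed diagram and assigning it the class obtained by ignoring its right-of-the-cut-line arc) already computes $s_i\cdot L_\alpha$, and then invokes homogeneity to kill the lower-degree term. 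But whether the skein expansion computes the action is precisely the statement being proved, so it cannot be used to constrain its own output; and even granting it, what you need is not merely that the disallowed term has coefficient zero in some expansion, but that the allowed terms by themselves already exhaust $s_i\cdot L_\alpha$.

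Fortunately the problematic case requires no sign or support bookkeeping at all. If $i$ and $i+1$ are both left endpoints of cups crossing the cut line, then by the definition of $\mathcal{U}_\alpha$ every $U\in\mathcal{U}_\alpha$ contains both $i$ and $i+1$, so $s_i(U)=U$ and hence $s_i\cdot L_\alpha=L_\alpha$ exactly; this coincides with the skein prediction once the cup--cap term is set to zero, which is the desired verification. With this replacement, and the term-by-term check of the remaining local configurations (which, as you say, are those of \cite{SW22} with the cut line inserted---note that a left endpoint of a crossing cup can only neighbor a ray, a right endpoint, or a left endpoint of a cup on its left, so no other disallowed diagrams arise), your argument closes. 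One small correction: the classes $L_{\mathbf c}$ with $\mathbf c\in\mathbb{B}_{n-k,k,m}$ span only the top homology; the basis of $H_*(\mathcal{S}^{\Delta}_{(n-k,k),m})$ relevant here is $\{L_\alpha\}$ indexed by all $\Delta$-weights $\alpha$ of type $(n-k,k,m)$, and stability must be checked degree by degree against that basis.
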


\begin{exe}
    Consider the weight $\alpha$ of \autoref{ex:line-diagram}, with corresponding $\Delta$-cup diagram
    \begin{gather*}
            C(\alpha) = \xy (0,0)*{
            \begin{tikzpicture}[scale=0.65]
            \draw[thick,densely dotted] (-.5,1) to (5.5,1);
            \draw[black,fill=black] (0,1) circle (2.0pt);
            \draw[black,fill=black] (1,1) circle (2.0pt);
            \draw[black,fill=black] (2,1) circle (2.0pt);
            \draw[black,fill=black] (3,1) circle (2.0pt);
            \draw[black,fill=black] (4,1) circle (2.0pt);
            \draw[black,fill=black] (5,1) circle (2.0pt);
            \draw[very thick] (0,1) to[out=-90,in=180] (0.5,.5) to[out=0,in=-90] (1,1);
            \draw[very thick] (3,1) to[out=-90,in=180] (3.5,.5) to[out=0,in=-90] (4,1);
            \draw[very thick] (2,1) -- (2,-1);
            \draw[very thick] (5,1) -- (5,-1);
            \draw[cutline] (3.5,-1) to (3.5,1.15);
        \end{tikzpicture}
        }\endxy
    \end{gather*}
    Then the actions of $s_1$, $s_2$ and $s_3$ on $C(\alpha)$ are respectively given by
    \begin{gather*}
        -\begin{tikzpicture}[baseline={(0,0)},scale=0.65]
            \draw[thick,densely dotted] (-.5,1) to (5.5,1);
            \draw[black,fill=black] (0,1) circle (2.0pt);
            \draw[black,fill=black] (1,1) circle (2.0pt);
            \draw[black,fill=black] (2,1) circle (2.0pt);
            \draw[black,fill=black] (3,1) circle (2.0pt);
            \draw[black,fill=black] (4,1) circle (2.0pt);
            \draw[black,fill=black] (5,1) circle (2.0pt);
            \draw[very thick] (0,1) to[out=-90,in=180] (0.5,.5) to[out=0,in=-90] (1,1);
            \draw[very thick] (3,1) to[out=-90,in=180] (3.5,.5) to[out=0,in=-90] (4,1);
            \draw[very thick] (2,1) -- (2,-1);
            \draw[very thick] (5,1) -- (5,-1);
            \draw[cutline] (3.5,-1) to (3.5,1.15);
        \end{tikzpicture}\ ,\quad
        \begin{tikzpicture}[baseline={(0,0)},scale=0.65]
            \draw[thick,densely dotted] (-.5,1) to (5.5,1);
            \draw[black,fill=black] (0,1) circle (2.0pt);
            \draw[black,fill=black] (1,1) circle (2.0pt);
            \draw[black,fill=black] (2,1) circle (2.0pt);
            \draw[black,fill=black] (3,1) circle (2.0pt);
            \draw[black,fill=black] (4,1) circle (2.0pt);
            \draw[black,fill=black] (5,1) circle (2.0pt);
            \draw[very thick] (1,1) to[out=-90,in=180] (1.5,.5) to[out=0,in=-90] (2,1);
            \draw[very thick] (3,1) to[out=-90,in=180] (3.5,.5) to[out=0,in=-90] (4,1);
            \draw[very thick] (0,1) -- (0,-1);
            \draw[very thick] (5,1) -- (5,-1);
            \draw[cutline] (3.5,-1) to (3.5,1.15);
        \end{tikzpicture}
        +
        \begin{tikzpicture}[baseline={(0,0)},scale=0.65]
            \draw[thick,densely dotted] (-.5,1) to (5.5,1);
            \draw[black,fill=black] (0,1) circle (2.0pt);
            \draw[black,fill=black] (1,1) circle (2.0pt);
            \draw[black,fill=black] (2,1) circle (2.0pt);
            \draw[black,fill=black] (3,1) circle (2.0pt);
            \draw[black,fill=black] (4,1) circle (2.0pt);
            \draw[black,fill=black] (5,1) circle (2.0pt);
            \draw[very thick] (0,1) to[out=-90,in=180] (0.5,.5) to[out=0,in=-90] (1,1);
            \draw[very thick] (3,1) to[out=-90,in=180] (3.5,.5) to[out=0,in=-90] (4,1);
            \draw[very thick] (2,1) -- (2,-1);
            \draw[very thick] (5,1) -- (5,-1);
            \draw[cutline] (3.5,-1) to (3.5,1.15);
        \end{tikzpicture}\ ,\quad 
\end{gather*}
and
\begin{gather*}
\begin{tikzpicture}[baseline={(0,0)},scale=0.65]
            \draw[thick,densely dotted] (-.5,1) to (5.5,1);
            \draw[black,fill=black] (0,1) circle (2.0pt);
            \draw[black,fill=black] (1,1) circle (2.0pt);
            \draw[black,fill=black] (2,1) circle (2.0pt);
            \draw[black,fill=black] (3,1) circle (2.0pt);
            \draw[black,fill=black] (4,1) circle (2.0pt);
            \draw[black,fill=black] (5,1) circle (2.0pt);
            \draw[very thick] (0,1) to[out=-90,in=180] (0.5,.5) to[out=0,in=-90] (1,1);
            \draw[very thick] (2,1) to[out=-90,in=180] (2.5,.5) to[out=0,in=-90] (3,1);
            \draw[very thick] (4,1) -- (4,-1);
            \draw[very thick] (5,1) -- (5,-1);
            \draw[cutline] (3.5,-1) to (3.5,1.15);
        \end{tikzpicture}
        +
       \begin{tikzpicture}[baseline={(0,0)},scale=0.65]
            \draw[thick,densely dotted] (-.5,1) to (5.5,1);
            \draw[black,fill=black] (0,1) circle (2.0pt);
            \draw[black,fill=black] (1,1) circle (2.0pt);
            \draw[black,fill=black] (2,1) circle (2.0pt);
            \draw[black,fill=black] (3,1) circle (2.0pt);
            \draw[black,fill=black] (4,1) circle (2.0pt);
            \draw[black,fill=black] (5,1) circle (2.0pt);
            \draw[very thick] (0,1) to[out=-90,in=180] (0.5,.5) to[out=0,in=-90] (1,1);
            \draw[very thick] (3,1) to[out=-90,in=180] (3.5,.5) to[out=0,in=-90] (4,1);
            \draw[very thick] (2,1) -- (2,-1);
            \draw[very thick] (5,1) -- (5,-1);
            \draw[cutline] (3.5,-1) to (3.5,1.15);
        \end{tikzpicture}.
\end{gather*}
\end{exe}

\begin{proof}[Proof of \autoref{prop:action_symmetric}]
    This is a direct computation, along the lines of \cite{russell}.
\end{proof}

As already shown in \cite{GLW21} for the top degree, the representations obtained in each degree are not simple. In the following, if $\mu$ is a skew partition of $n-m$, we denote by $V_{\mu}$ the corresponding skew Specht module which is a complex representation of $S_{n-m}$. If $\mu$ is a partition of $n-m$, then it is the usual irreducible Specht module.

\begin{thm}\label{prop:action_symmetric_hom}
    Let $0\leq d \leq k$. As $S_{n-m}$ representations, we have the following isomorphisms:
    \begin{gather*}
        H_{2d(\mathcal{S}^{\Delta}_{(n-k,k),m})} \simeq \bigoplus_{j=\max(0,2d+m-n)}^{\min(m,d)} V_{(n-m-d+j,d-j)} \simeq V_{(n-d,d)/(m)}.
    \end{gather*}
\end{thm}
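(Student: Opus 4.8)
The plan is to prove both isomorphisms inside the ambient tensor space $V^{\otimes(n-m)}$, where $V=H_{*}(\mathbb{S}^{2})\cong\mathbb{C}^{2}$ with basis $v_{\up}$ (the point class, degree $0$) and $v_{\down}$ (degree $2$), using the embedding $H_{*}(\mathcal{S}^{\Delta}_{(n-k,k),m})\hookrightarrow H_{*}((\mathbb{S}^{2})^{n-m})=V^{\otimes(n-m)}$ from the previous proposition, under which $S_{n-m}$ permutes the factors and $l_{U}$ is the pure tensor with $v_{\down}$ at the positions of $U$ and $v_{\up}$ elsewhere. First I would record that the degree-$2d$ part of $V^{\otimes(n-m)}$ is spanned by the $l_{U}$ with $|U|=d$, which is exactly the permutation module $M^{(n-m-d,d)}$ on $d$-subsets; by Young's rule this is multiplicity free, $M^{(n-m-d,d)}\cong\bigoplus_{b=0}^{\min(d,n-m-d)}V_{(n-m-b,b)}$. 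Since $H_{2d}(\mathcal{S}^{\Delta}_{(n-k,k),m})$ is an $S_{n-m}$-submodule, it is automatically a direct sum of a subset of these irreducibles, so the whole problem reduces to deciding, for each $b$, whether $V_{(n-m-b,b)}$ occurs.

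The decisive computation uses the $\slt$-raising operator $E=\sum_{i}e_{i}$ on $V^{\otimes(n-m)}$, where $e\cdot v_{\down}=v_{\up}$ and $e\cdot v_{\up}=0$. Reading off the definition of $L_{\alpha}$, it factors as a tensor product over $C(\alpha)$: an $\slt$-singlet $v_{\up}v_{\down}-v_{\down}v_{\up}$ at each full cup, a down-vector $v_{\down}$ at the left endpoint of each cup crossing the cut line (a half-cup), and an up-vector $v_{\up}$ at each ray. Because $E$ annihilates singlets and up-vectors and only lowers the finitely many half-cup down-vectors, if $\alpha$ has $h$ half-cups then $E^{\,d-b}L_{\alpha}=0$ whenever $d-b>h$, whereas for $d-b=h$ one gets $E^{\,h}L_{\alpha}=h!\,w_{\alpha}$, where $w_{\alpha}$ is the highest-weight cup state obtained by raising all half-cups (only singlets and up-vectors survive, so $Ew_{\alpha}=0$). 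As a $\Delta$-cup diagram has at most $m$ points to the right of the cut line, every $\alpha$ satisfies $h\le m$.

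Next I would convert this into an occurrence criterion via Schur--Weyl duality for $(\gl{2},S_{n-m})$ on $V^{\otimes(n-m)}=\bigoplus_{b}W_{b}\boxtimes V_{(n-m-b,b)}$. Let $P_{b}$ be the $S_{n-m}$-equivariant projection onto $\ker E\cap(\deg 2b)$, which is the top weight space of $W_{b}$, i.e. one copy of $V_{(n-m-b,b)}$. A weight count inside each $W_{b'}$ shows that, on the degree-$2d$ part, $P_{b}E^{\,d-b}$ is an isomorphism on the $V_{(n-m-b,b)}$-isotypic component and zero on every other; hence $V_{(n-m-b,b)}$ occurs in $H_{2d}(\mathcal{S}^{\Delta}_{(n-k,k),m})$ if and only if $P_{b}E^{\,d-b}$ is nonzero on it. The vanishing half of the previous paragraph gives $E^{\,d-b}L_{\alpha}=0$ for all basis vectors once $d-b>m\ge h$, ruling out every $b<d-m$; the nonvanishing half gives, for each $b$ with $\max(0,d-m)\le b\le\min(d,n-m-d)$, a $\Delta$-weight $\alpha$ with exactly $h=d-b$ half-cups and $b$ full cups (such $\alpha$ exists precisely in this range, the binding constraints being $h\le m$ and $b+d\le n-m$), for which $P_{b}E^{\,h}L_{\alpha}=h!\,w_{\alpha}\ne 0$. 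This determines the occurring $b$ exactly and yields $H_{2d}\cong\bigoplus_{b=\max(0,d-m)}^{\min(d,n-m-d)}V_{(n-m-b,b)}$, which is the first displayed isomorphism after substituting $j=d-b$.

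Finally, the isomorphism $\bigoplus_{j}V_{(n-m-d+j,d-j)}\cong V_{(n-d,d)/(m)}$ is pure symmetric-group combinatorics: applying the Littlewood--Richardson rule to the skew Specht module of the two-row shape $(n-d,d)/(m)$ with inner shape the single row $(m)$, Pieri's rule gives $V_{(n-d,d)/(m)}\cong\bigoplus_{\nu}V_{\nu}$ over partitions $\nu=(a,b)$ with $(n-d,d)/\nu$ a horizontal strip of size $m$; solving $a+b=n-m$, $d\le a\le n-d$, $b\le d$ reproduces exactly $\nu=(n-m-d+j,d-j)$ for $\max(0,2d+m-n)\le j\le\min(m,d)$, and I would cite this from \cite{jp79}. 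The main obstacle is the middle step. One must keep in mind that $H_{*}(\mathcal{S}^{\Delta}_{(n-k,k),m})$ is \emph{not} $\slt$-stable, so $E$ is genuinely an auxiliary operator on the ambient $V^{\otimes(n-m)}$, and the crux is verifying that $P_{b}E^{\,d-b}$ really is, up to isomorphism, the isotypic projector onto $V_{(n-m-b,b)}$ in degree $2d$; this is exactly where Schur--Weyl duality and the multiplicity-freeness of $M^{(n-m-d,d)}$ are both essential.
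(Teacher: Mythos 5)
Your proof is correct, but it takes a genuinely different route from the paper's. The paper filters $H_{2d}(\dspr_{(n-k,k),m})$ by the number of cups through the cut line, checks via the skein description of the $S_{n-m}$-action that each filtration step $W_j$ is stable, identifies the subquotient $W_j/W_{j-1}$ (spanned by diagrams with exactly $j$ crossing cups) with $H_{2(d-j)}$ of an ordinary two-row Springer fiber by deleting the right part of the diagram and turning half-cups into rays, and then invokes the known irreducibility of each homological degree of $\spr_{(n-m-d+j,d-j)}$ together with semisimplicity to pass from the filtration to the direct sum. You instead work in the ambient $H_{2d}((\mathbb{S}^2)^{n-m})$, observe that it is the multiplicity-free permutation module on $d$-subsets, and use the commuting $\slt$-raising operator $E$ plus Schur--Weyl duality to decide exactly which constituents $V_{(n-m-b,b)}$ survive; the tensor factorization of $L_{\alpha}$ into singlets, half-cup down-vectors and ray up-vectors makes the operator $P_bE^{d-b}$ computable, and I have checked that it is indeed the isotypic detector you claim (the weight-space count in each $\slt$-constituent works out, and $P_bE^{h}L_{\alpha}=h!\,w_{\alpha}\neq 0$ when $\alpha$ has exactly $h=d-b$ half-cups). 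Your approach buys self-containedness -- it does not import the irreducibility of $H_{2\ast}(\spr_{(N-c,c)})$ but rather reproves it as the case $m=0$, where no half-cups exist -- and it anticipates the Schur--Weyl analysis the paper performs later for the degenerate affine Hecke algebra; the paper's approach buys the filtration by crossing cups itself, which is of independent combinatorial interest and reappears in the proof. The second isomorphism is handled identically in both arguments via Pieri and \cite{jp79}. The one place you are terse is the assertion that for every $j$ in the range $\max(0,2d+m-n)\le j\le\min(m,d)$ there exists a $\Delta$-weight of type $(n-k,k,m)$ whose diagram has $d$ cups, exactly $j$ of them crossing the cut line: this is true with exactly the binding constraints you name, but when $d<k$ one must place the $k-d$ unpaired down-symbols (which become rays) either at the far right or, if $m-j<k-d$, partly just left of the crossing cups' left endpoints, and a one-line construction should be recorded; note the paper's proof needs the same nonemptiness implicitly when it asserts that every cup diagram on $n-m$ points with $d-j$ cups arises from the forgetting process.
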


\begin{proof}
    For $0\leq j \leq \min(d,m)$, we consider the subspace $W_j$ of $H_{2d(\mathcal{S}^{\Delta}_{(n-k,k),m})}$ spanned by the vectors $L_{\alpha}$ such that $C(\alpha)$ has at most $j$ cups through the cut line. 

    Let us first notice that $W_{\min(d,m)}=H_{2d(\mathcal{S}^{\Delta}_{(n-k,k),m})}$. Indeed, if $L_{\alpha}\in H_{2d(\mathcal{S}^{\Delta}_{(n-k,k),m})}$ then the diagram $C(\alpha)$ has $d$ cups and since there are $m$ points on the right of the cut line, there are at most $m$ cups of $C(\alpha)$ through the cut line. 

    We also have $W_{j}=\{0\}$ if $j<m-n+2d$. Suppose that $m-n+2d>0$, otherwise there is nothing to prove. If $C$ is a cup diagram on $n$ points with $d$ cups, then there are $n-2d$ rays on this diagram. Denote by $r$ the number of rays on the right of the cut line. This implies that $m-r$ cups must pass through the cut line. But since $r\leq n-2d$, we obtain that $C$ must have at least $2d+m-n$ cups through the cut line.

    From the description of the action of $S_{n-m}$ in \autoref{prop:action_symmetric}, it is clear that $W_{j}$ is stable under the action of $S_{n-m}$. Therefore, we obtain a filtration
    \begin{gather*}
        \{0\} \subset W_{\max(0,2d+m-n)}\subset W_{\max(0,2d+m-n)+1} \subset \cdots \subset W_{\min(d,m)} = H_{2d(\mathcal{S}^{\Delta}_{(n-k,k),m})}
    \end{gather*}
    by $S_{n-m}$ invariant subspaces. We claim that $W_{j}/W_{j-1}\simeq V_{(n-m-d+j,d-j)}$, which will prove the first isomorphism of the proposition. 
    
    Indeed, the quotient space $W_j/W_{j-1}$ has a basis given by $\Delta$-cup diagrams on $n$ points with $d$ cups and exactly $j$ cups through the cut line. Forgetting the right part of such a diagram and replacing the $j$ half-cups created this way by rays, we obtain a cup diagram on $n-m$ points with $d-j$ cups, and all such diagrams can be obtained by this process.

    We obtain an isomorphism of vector spaces between $W_{j}/W_{j-1}$ and $H_{2(d-j)(\mathcal{S}_{(n-m-d+j,d-j)})}$, which is easily checked to be $S_{n-m}$-equivariant thanks to \autoref{prop:action_symmetric}. Here, $\mathcal{S}_{(n-m-d+j,d-j)}$ is the topological model for the Springer fiber for the partition $(n-m-d+j,d-j)$, see \cite{russell}. Therefore, since $H_{2(d-j)(\mathcal{S}_{(n-m-d+j,d-j)})}$ is isomorphic to $V_{(n-m-d+j,d-j)}$ as a representation of $S_{n-m}$, we have proven our claim.

    Concerning the isomorphism with the skew Specht module $V_{(n-d,d)/(m)}$, we use that the multiplicity of $V_{\mu}$ in $V_{(n-d,d)/(m)}$ as an $S_{n-m}$-representation is equal to the multiplicity of $V_{(n-d,d)}$ in $\mathrm{Ind}_{S_{n-m}\times S_m}^{S_n}(V_\mu\otimes V_{(m)})$ as $S_n$-representations \cite[Section 3]{jp79}. From Pieri's formula, we deduce that, as an $S_{n-m}$-representation, we have $V_{(n-d,d)/(m)}\simeq\bigoplus_{\mu}V_{\mu}$, the direct sum being on partitions $\mu$ obtained from $(n-d,d)$ by removing $m$ boxes, no two in the same column. We easily check that we obtain the expected direct sum.
\end{proof}

Note that the isomorphism $H_{*}(\dspr_{(n-k,k),m})\simeq H_{*}(\mathcal{S}^{\Delta}_{(n-k,k),m})$ intertwines the $S_{n-m}$-action since the first Chern class of the dual $i$th line bundle maps to the
hyperplane class of the $i$th copy of $\mathbb{P}^1 \simeq \mathbb{S}^2$, see \cite[Theorem 2.1]{CK08}.

\section{Action of the degenerate affine Hecke algebra}\label{sec:degenerate-affine}

We enhance the action of the symmetric group $S_{n-m}$ on the cohomology of the $\Delta$-Springer variety to an action of the degenerate affine Hecke algebra. Each degree of the cohomology will be irreducible for the action of this algebra.

\subsection{Degenerate affine Hecke algebra and action on the homology of \texorpdfstring{$(\mathbb{S}^{2})^{n-m}$}{(S2)(n-m)}}\label{S:Haction}

We start by defining the degenerate affine Hecke algebra.

\begin{defn}
    The degenerate affine Hecke algebra $H_{n-m}$ is the $\mathbb{C}$-algebra with generators  
    $\sigma_1,\ldots,\sigma_{n-m-1}$, $x_1,\ldots,x_{n-m}$ and relations
    \begin{align*}
        \sigma_i^2&=1 &\text{for } 1\leq i < n-m\\ 
        \sigma_i\sigma_j&=\sigma_j\sigma_i &\text{if } \lvert i-j \rvert > 1,\\
        \sigma_i\sigma_{i+1}\sigma_i &= \sigma_{i+1}\sigma_i\sigma_{i+1} & \text{for } 1 \leq i < n-m-1,\\
        x_ix_j&=x_jx_i & \text{for } 1 \leq i,j \leq n-m,\\
        \sigma_ix_j &= x_j\sigma_i &\text{if } j\neq i,i+1,\\
        x_{i+1}\sigma_i&-\sigma_ix_i = 1 & \text{for } 1 \leq i < n-m.
    \end{align*}
\end{defn}

There is a well-known polynomial action of $H_{n-m}$ on $\mathbb{C}[X_1,\ldots,X_{n-m}]$ given by the so-called Dunkl operators. This action preserves the degree, and the following definition is the restriction of the action of Dunkl operators on $\mathrm{span}(X_{i_1}\cdots X_{i_k} \mid k\in \mathbb{N}, 1 \leq i_1 < \cdots < i_k \leq n-m)$, written in the language of line diagrams.

\begin{defn}
    Let $\boldsymbol{\xi} = (\xi_0,\ldots,\xi_{n-m})\in \mathbb{C}^{n-m+1}$. Given a line diagram $l_U$, with $U$ a subset of $\{1,\ldots,n-m\}$ of cardinality $d$, and $1\leq i \leq n-m$, we define
    \begin{gather*}
    \mathcal{D}_i^{(\boldsymbol{\xi})}(l_U) 
    = 
    \begin{cases}\displaystyle
    (\xi_d+i-n+m-\left\lvert\{1 \leq j < i\mid j\not\in U\}\right\rvert)l_U  -\sum_{\substack{j=i+1\\j\not\in U}}^{n-m} l_{U\setminus\{i\}\cup\{j\}}& \text{if } i\in U,\\ \displaystyle 
    \left(i-n+m+\left\lvert\{i < j \leq n-m\mid j\in U\}\right\rvert\right) l_U  + \sum_{\substack{j=1\\j\in U}}^{i-1} l_{U\setminus\{j\}\cup\{i\}} & \text{if } i\not\in U.
    \end{cases}
    \end{gather*}
\end{defn}

We will call $\boldsymbol{\xi}$ the parameters of the action. Since this action arises from Dunkl operators, this defines an action of the degenerate affine Hecke algebra on $H_{*}((\mathbb{S}^2)^{n-m})$, see \cite{che05}.

\begin{prop}
    The assignment $\sigma_i\mapsto s_{i}$ and $x_i\mapsto \mathcal{D}_{i}^{(\boldsymbol{\xi})}$ is a well-defined action of the degenerate affine Hecke algebra $H_{n-m}$ on $H_{*}((\mathbb{S}^2)^{n-m})$.
\end{prop}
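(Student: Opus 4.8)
The plan is to verify the defining relations of $H_{n-m}$ directly, organizing them into three groups and reducing the genuinely nontrivial ones to the classical theory of Dunkl operators. The relations involving only the $\sigma_i$ (namely $\sigma_i^2=1$, the far-commutativity $\sigma_i\sigma_j=\sigma_j\sigma_i$ for $\lvert i-j\rvert>1$, and the braid relation) hold automatically, since $\sigma_i\mapsto s_i$ is nothing but the permutation action of $S_{n-m}$ on $(\mathbb{S}^2)^{n-m}$ and hence on homology; these require no computation. What remains are the three relations mixing the $x_i$ with the $\sigma_i$ and with each other: the commutativity $x_ix_j=x_jx_i$, the partial commutativity $\sigma_ix_j=x_j\sigma_i$ for $j\neq i,i+1$, and the crossing relation $x_{i+1}\sigma_i-\sigma_ix_i=1$.

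First I would make the identification underlying the statement explicit. Writing $X_U=\prod_{j\in U}X_j$ for the squarefree monomial attached to $U\subseteq\{1,\ldots,n-m\}$, the map $l_U\mapsto X_U$ identifies $H_{*}((\mathbb{S}^2)^{n-m})$ with the subspace $V\subseteq\mathbb{C}[X_1,\ldots,X_{n-m}]$ of squarefree monomials, matching the homological degree $2\lvert U\rvert$ with the polynomial degree $\lvert U\rvert$. Under this identification the permutation action $s_i$ becomes the transposition of the variables $X_i,X_{i+1}$, and one checks, by comparing the two cases in the definition of $\mathcal{D}_i^{(\boldsymbol{\xi})}$, that it is exactly the restriction to $V$ of the corresponding Dunkl--Cherednik operator acting on $\mathbb{C}[X_1,\ldots,X_{n-m}]$. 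The subspace $V$ is invariant: these operators preserve total degree and send squarefree monomials to squarefree monomials, their diagonal part being scalar and their off-diagonal part a sum of variable transpositions. Granting this identification, the three nontrivial relations at parameter $\boldsymbol{\xi}=\boldsymbol{0}$ are precisely the assertion that the Dunkl--Cherednik operators together with the $s_i$ define the polynomial representation of $H_{n-m}$, for which I would cite \cite{che05}.

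It then remains to incorporate the parameters $\boldsymbol{\xi}$. Comparing the formulas case by case yields the clean decomposition $\mathcal{D}_i^{(\boldsymbol{\xi})}=\mathcal{D}_i^{(\boldsymbol{0})}+\xi_{\lvert\cdot\rvert}N_i$, where $N_i$ is the occupation operator ($N_il_U=l_U$ if $i\in U$ and $N_il_U=0$ otherwise) and $\xi_{\lvert\cdot\rvert}$ acts on the degree-$d$ piece $V_d$ by the scalar $\xi_d$. Since every generator preserves the homological grading and all relations are homogeneous, it suffices to check the relations on each $V_d$, where $\xi_d$ is a genuine constant and the perturbation is simply $\xi_dN_i$. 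The crossing relation survives because $N_{i+1}s_i=s_iN_i$ (swapping positions $i$ and $i+1$ interchanges the two occupation operators, while the grading is untouched), so the perturbation contributes $\xi_d(N_{i+1}s_i-s_iN_i)=0$; the relation $\sigma_ix_j=x_j\sigma_i$ survives because $s_i$ commutes with $N_j$ for $j\neq i,i+1$; and the commutativity $x_ix_j=x_jx_i$ reduces, after discarding the terms already known to vanish, to the identity $[\mathcal{D}_i^{(\boldsymbol{0})},N_j]=[\mathcal{D}_j^{(\boldsymbol{0})},N_i]$, which I would verify by a direct matrix computation in the $l_U$-basis.

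The main obstacle is the commutativity relation $x_ix_j=x_jx_i$: this is the one relation that fails for naive degree-preserving operators and holds only for the specifically tuned Dunkl--Cherednik operators, so the weight of the argument rests on correctly matching $\mathcal{D}_i^{(\boldsymbol{0})}$ with the operator covered by \cite{che05}, together with the supplementary identity $[\mathcal{D}_i^{(\boldsymbol{0})},N_j]=[\mathcal{D}_j^{(\boldsymbol{0})},N_i]$ needed to propagate commutativity through the $\boldsymbol{\xi}$-shift. By contrast, the crossing relation, which carries the inhomogeneous term ``$=1$'', is comparatively easy once the base case is in hand, reducing to the transparent identity $N_{i+1}s_i=s_iN_i$.
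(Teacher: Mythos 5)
Your proposal is correct and follows essentially the same route as the paper, whose entire proof consists of the observation that the operators $\mathcal{D}_i^{(\boldsymbol{\xi})}$ are the restriction, to the span of squarefree monomials, of the Dunkl-operator polynomial representation of $H_{n-m}$, together with a citation to \cite{che05}. The only real difference is that you additionally spell out how to handle the degree-dependent parameter $\xi_d$ via the occupation operators $N_i$ --- the identities $N_{i+1}s_i=s_iN_i$ and $[\mathcal{D}_i^{(\boldsymbol{0})},N_j]=[\mathcal{D}_j^{(\boldsymbol{0})},N_i]$ do check out --- a point the paper's one-line proof leaves implicit.
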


\subsection{Restriction to the homology of the \texorpdfstring{$\Delta$}{Delta}-Springer variety}

It turns out that the cohomology of the $\Delta$-Springer variety, viewed as a subspace of the cohomology of the product of $n-m$ spheres, is stable under the action of the degenerate affine Hecke algebra for specific values of the parameter $\boldsymbol{\xi}$.

\begin{thm}\label{thm:stability_degenerate}
    Let $\boldsymbol{\xi}=(\xi_0,\ldots,\xi_{n-m})$ with $\xi_d = n+1-d$. Then the subspace $H_{*}(\dspr_{(n-k,k),m})$ of $H_{*}((\mathbb{S}^2)^{n-m})$ is stable under the action of $H_{n-m}$.
\end{thm}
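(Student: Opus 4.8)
The plan is to isolate the single genuinely new ingredient. By \autoref{prop:action_symmetric} we already know that $H_{*}(\dspr_{(n-k,k),m})$, viewed inside $H_{*}((\mathbb{S}^2)^{n-m})$, is stable under every $\sigma_i = s_i$. Since the assignment $\sigma_i\mapsto s_i$, $x_i\mapsto\mathcal{D}_i^{(\boldsymbol{\xi})}$ is a genuine action of $H_{n-m}$, the defining relation $x_{i+1}\sigma_i-\sigma_ix_i=1$ holds at the level of operators, and rewriting it with $\sigma_i^2=1$ gives $\mathcal{D}_{i+1}^{(\boldsymbol{\xi})}=s_i\,\mathcal{D}_i^{(\boldsymbol{\xi})}\,s_i+s_i$. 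Hence the operator subalgebra generated by $s_1,\ldots,s_{n-m-1}$ together with $\mathcal{D}_1^{(\boldsymbol{\xi})}$ already contains every $\mathcal{D}_i^{(\boldsymbol{\xi})}$. Consequently it suffices to prove that $H_{*}(\dspr_{(n-k,k),m})$ is stable under the single operator $\mathcal{D}_1^{(\boldsymbol{\xi})}$: the stability under all $s_i$ then propagates to stability under every $x_i$ by induction on $i$, and thereby to the whole algebra $H_{n-m}$.

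First I would record the shape of $\mathcal{D}_1^{(\boldsymbol{\xi})}$ on the cell basis $(l_U)$ at the resonant value $\xi_d=n+1-d$, where $d=\lvert U\rvert$ is the homological degree. When $1\notin U$ the defining formula has no off-diagonal part and collapses to the scalar $\mathcal{D}_1^{(\boldsymbol{\xi})}(l_U)=(m-n+d+1)\,l_U$; when $1\in U$ one obtains the scalar $(m-d+2)\,l_U$ together with the \emph{hopping} terms $-\sum_{j>1,\,j\notin U}l_{U\setminus\{1\}\cup\{j\}}$. I would then apply this to $L_\alpha=\sum_{U\in\mathcal U_\alpha}(-1)^{\Lambda_\alpha(U)}l_U$, organizing the computation according to the local configuration of $C(\alpha)$ at vertex $1$; since vertex $1$ is leftmost it can only carry a ray or be the left endpoint of a cup, so these cases are exhaustive. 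If $\RayConnect$ in $C(\alpha)$, then $1\notin U$ for every $U\in\mathcal U_\alpha$, so $L_\alpha$ is an eigenvector of $\mathcal{D}_1^{(\boldsymbol{\xi})}$ with eigenvalue $m-n+d+1$ and nothing leaves the subspace. The substantial cases are when vertex $1$ is the left endpoint of a cup, either an internal cup $1\CupConnect j$ with $j\leq n-m$ or a cup crossing the cut line.

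In the cup cases I would pair, via the bijection exchanging the two endpoints of the relevant cup, the summands of $L_\alpha$ with $1\in U$ against those with $1\notin U$, tracking the sign flip in $(-1)^{\Lambda_\alpha}$ that comes from destroying or creating a right endpoint. The expectation---and this is the crux---is that the resonant choice $\xi_d=n+1-d$ is exactly what forces the diagonal scalars and the hopping terms to recombine into a signed sum of cell classes of the form $L_\beta$, where each $\beta\in\mathbb{B}_{n-k,k,m}$ arises from $C(\alpha)$ by detaching the cup endpoint at vertex $1$ and reattaching it further to the right. The main obstacle is precisely this recombination together with its sign bookkeeping: one must verify that every hopping term either assembles into a legitimate $L_\beta$ or produces a diagram with an entire cup to the right of the cut line, the latter being annihilated in a way compatible with the cancellations, so that no stray cell class $l_U$ survives outside $H_{*}(\dspr_{(n-k,k),m})$. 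Once closure under $\mathcal{D}_1^{(\boldsymbol{\xi})}$ is established, the reduction of the first paragraph upgrades it to stability under the full degenerate affine Hecke algebra $H_{n-m}$.
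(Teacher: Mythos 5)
Your reduction is sound: since $x_{i+1}=\sigma_i x_i\sigma_i+\sigma_i$, the algebra $H_{n-m}$ is generated by the symmetric group together with any single polynomial generator, and your formulas for $\mathcal{D}_1^{(\boldsymbol{\xi})}$ on $l_U$ at $\xi_d=n+1-d$ (eigenvalue $m-n+d+1$ when $1\notin U$; eigenvalue $m-d+2$ plus hopping terms when $1\in U$) are correct, as is the observation that vertex $1$ can only carry a ray or a left cup endpoint. This matches the architecture of the paper's proof, which makes the mirror-image choice of generator $x_{n-m}$ and likewise splits into cases according to the local configuration of $C(\alpha)$ at the distinguished vertex.

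However, there is a genuine gap exactly where you flag ``the crux'': you never verify that the hopping terms recombine into a linear combination of classes $L_\beta$; you only state it as an expectation. That recombination is the entire mathematical content of the theorem --- it is where the resonant value $\xi_d=n+1-d$ is used beyond the diagonal coefficients, and it is the analogue of \autoref{lem:action_dunkl_right}, which occupies the bulk of the paper's argument: one must introduce the modified weights $\alpha^i$ obtained by sliding the cup attached to the distinguished vertex, partition the index sets $\mathcal{U}_{\alpha^i}$ suitably, and carry out a telescoping sign computation to identify the result as $-\sum_i(m-r+i)L_{\alpha^i}$. Note also that your choice of $x_1$ is computationally less favorable than the paper's $x_{n-m}$: for $i=n-m$ the hopping sum $\sum_{j>i,\,j\notin U}$ is empty whenever $n-m\in U$, so the ``left endpoint of a crossing cup'' case is immediately an eigenvector (\autoref{lem:action_dunkl_left}), whereas for $i=1$ the hopping sum is nonempty whenever $1\in U$, so even your crossing-cup case requires the full recombination argument. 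The plan is viable (it checks out on small examples), but as written it asserts rather than proves the one step to which the theorem reduces.
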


We will prove the theorem as follows: since the degenerate affine Hecke algebra $H_{n-m}$ is generated by the symmetric group and $x_{n-m}$, it suffices to show that the subspace $H_{*}(\dspr_{(n-k,k),m})$ is stable under the action of the symmetric group and under the action of $x_{n-m}$. The action of the symmetric group is given in \autoref{prop:action_symmetric}, therefore it suffices to consider the action of $x_{n-m}$. We give explicit formulas in terms of line diagrams, which have a skein theoretic interpretation akin to the action of the symmetric group.

Let $\alpha$ be a $\Delta$-weight of type $(n-k,k,m)$ and suppose that the corresponding $\Delta$-cup diagram $C(\alpha)$ has $d$ cups. In other words, the element $L_{\alpha}$ corresponding to $\alpha$ is in $H_{2d}(\dspr_{(n-k,k),m})$. We discuss three cases, depending whether $n-m$ is the endpoint of a ray, the right endpoint of a cup or the left endpoint of a cup in $C(\alpha)$. 

\begin{lem}\label{lem:action_dunkl_ray}
    Suppose that $n-m$ is the endpoint of a ray in $C(\alpha)$. Then $x_{n-m}\cdot L_{\alpha} = 0$.
\end{lem}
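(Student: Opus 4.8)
The plan is to compute $x_{n-m}\cdot L_\alpha=\mathcal{D}_{n-m}^{(\boldsymbol{\xi})}(L_\alpha)$ directly from the expansion $L_\alpha=\sum_{U\in\mathcal{U}_\alpha}(-1)^{\Lambda_\alpha(U)}l_U$ and to show that the coefficient of every basis vector $l_{U'}$ in the result vanishes, by exhibiting a sign-reversing pairing. The first and decisive step is to extract the structural consequence of the hypothesis: since $n-m$ is the endpoint of a ray and $C(\alpha)$ is crossingless, no cup can pass over the vertex $n-m$; as the cut line sits immediately to the right of $n-m$, this forces $C(\alpha)$ to contain \emph{no} cup crossing the cut line. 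Hence every cup of $C(\alpha)$ has both endpoints in $\{1,\dots,n-m-1\}$, and each $U\in\mathcal{U}_\alpha$ is simply a choice of exactly one endpoint from each of the $d$ cups, with $n-m\notin U$.

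Next I would specialize the Dunkl formula. Because $n-m\notin U$ for every $U$ occurring in $L_\alpha$, only the second branch of the definition of $\mathcal{D}_{n-m}^{(\boldsymbol{\xi})}$ applies; its diagonal coefficient is $(n-m)-n+m+\lvert\{n-m<j\le n-m\mid j\in U\}\rvert=0$ and the sum ranges over $j<n-m$, so
\[
\mathcal{D}_{n-m}^{(\boldsymbol{\xi})}(l_U)=\sum_{\substack{j\in U\\ j<n-m}} l_{U\setminus\{j\}\cup\{n-m\}}.
\]
Thus $x_{n-m}\cdot L_\alpha$ is a signed sum of basis vectors $l_{U'}$ with $n-m\in U'$. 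I would then fix such a $U'$, set $V=U'\setminus\{n-m\}$ (of size $d-1$), and note that the pairs $(U,j)$ contributing $l_{U'}$ are exactly those with $U=V\cup\{j\}\in\mathcal{U}_\alpha$, $j<n-m$, $j\notin V$.

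The cancellation is then immediate: for $U=V\cup\{j\}$ to be a one-endpoint-per-cup selection, $V$ must already pick one endpoint from $d-1$ of the cups and miss a single cup $(a,b)$, and then $j\in\{a,b\}$. Taking $j=a$ (the left endpoint) gives $\Lambda_\alpha(V\cup\{a\})=\#(\text{right endpoints in }V)$, whereas taking $j=b$ (the right endpoint) gives $\Lambda_\alpha(V\cup\{b\})=\#(\text{right endpoints in }V)+1$, so the two signs $(-1)^{\Lambda_\alpha}$ are opposite and their contributions sum to zero; and if $V$ is not a partial selection missing exactly one cup, no admissible $j$ exists and the coefficient vanishes trivially. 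Therefore every coefficient of $l_{U'}$ is zero and $x_{n-m}\cdot L_\alpha=0$. The main obstacle — and the only place the hypothesis is genuinely used — is the first step: without the crossinglessness argument ruling out cups through the cut line, the "missed cup'' could be one crossing the cut line, for which only the left endpoint lies in $\{1,\dots,n-m\}$, leaving a single uncancelled term. I would therefore be careful to establish the no-crossing-cup claim before invoking the pairing. (Equivalently, one can phrase the same computation in the tensor model $H_\ast((\mathbb{S}^2)^{n-m})\cong V^{\otimes(n-m)}$, where the operator $\sum_{j<n-m}f_j$ annihilates each cup factor $\omega\otimes 1-1\otimes\omega$ and each ray factor $1$, which is the conceptual reason behind the pairing.)
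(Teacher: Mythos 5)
Your proposal is correct and follows essentially the same route as the paper: both reduce to the observation that a ray at $n-m$ forces every cup of $C(\alpha)$ to lie entirely to the left of the cut line (so $n-m\notin U$ for all $U\in\mathcal{U}_\alpha$ and the diagonal coefficient of the Dunkl operator vanishes), and both cancel the remaining terms in pairs by swapping the chosen endpoint of a cup for its partner, which flips the sign $(-1)^{\Lambda_\alpha(U)}$. The only difference is bookkeeping: the paper packages the cancellation as a sign-reversing involution on pairs $(U,i)$, yielding $x_{n-m}\cdot L_\alpha=-x_{n-m}\cdot L_\alpha$, whereas you group contributions by the target basis vector $l_{U'}$ and show each coefficient vanishes.
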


\begin{proof}
    By definition, we have
    \begin{gather*}
        L_{\alpha} = \sum_{U\in \mathcal{U_{\alpha}}} (-1)^{\Lambda_{\alpha}(U)}l_{U}.
    \end{gather*}

    Since we have supposed that $n-m$ is the endpoint of a ray in $C(\alpha)$, we know that $n-m\not\in U$ for all $U\in \mathcal{U}_{\alpha}$. Therefore, the definition of the action of the degenerate affine Hecke algebra via Dunkl operators implies that
    \begin{gather*}
        x_{n-m}\cdot L_{\alpha} 
        = \sum_{U\in\mathcal{U}_{\alpha}} (-1)^{\Lambda_{\alpha}(U)}\sum_{i\in U} l_{U\setminus\{i\}\cup\{n-m\}}
        =\sum_{\substack{U\in\mathcal{U}_{\alpha}\\i\in U}}(-1)^{\Lambda_{\alpha}(U)}l_{U\setminus\{i\}\cup\{n-m\}}.
    \end{gather*}

    We consider the bijection $\psi$ of $\{(U,i)\mid U\in \mathcal{U}_{\alpha}, i\in U\}$ given by $\psi(U,i) = (U\setminus\{i\}\cup\{j\},j)$ where $j$ is such that $i$ and $j$ are connected by a cup in $C(\alpha)$. This is well defined: there exists no cup that crosses the cut line because $n-m$ is the endpoint of a ray. Moreover, it is clear that $\Lambda_{\alpha}(U\setminus\{i\}\cup\{j\}) = \Lambda_{\alpha}(U) \pm 1$ if $i\in U$ and $j$ is connected to $i$ by a cup in $C(\alpha)$.

    Therefore, using the bijection $\psi$, we obtain that
    \begin{gather*}
        x_{n-m}\cdot L_{\alpha} 
        = \sum_{\substack{U\in\mathcal{U}_{\alpha}\\i\in U}}(-1)^{\Lambda_{\alpha}(U)}l_{U\setminus\{i\}\cup\{n-m\}}
        = -\sum_{\substack{U\in\mathcal{U}_{\alpha}\\i\in U}}(-1)^{\Lambda_{\alpha}(U)}l_{U\setminus\{i\}\cup\{n-m\}}
        = -x_{n-m}\cdot L_{\alpha},
    \end{gather*}
    which implies that $x_{n-m}\cdot L_{\alpha}=0$.
\end{proof}

\begin{rem}
The proof of \autoref{lem:action_dunkl_ray} (as well as the proof of \autoref{lem:action_dunkl_right} below) uses a well-known argument from algebraic combinatorics. As in \cite[Section 2.6]{stanley}, we show the vanishing of some terms by constructing sign-reversing involutions on the summands.
\end{rem}

\begin{lem}\label{lem:action_dunkl_left}
    Suppose that $n-m$ is the left endpoint of a cup in $C(\alpha)$. Then 
    \begin{gather*}
        x_{n-m}\cdot L_{\alpha} = (m+1) L_{\alpha}.
    \end{gather*}
\end{lem}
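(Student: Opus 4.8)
The plan is to compute $x_{n-m}\cdot L_{\alpha}=\mathcal{D}_{n-m}^{(\boldsymbol{\xi})}(L_{\alpha})$ directly from the definition of the Dunkl operator, exploiting that the index $i=n-m$ is the largest possible. First I would observe that, since $n-m$ is the rightmost vertex to the left of the cut line, a cup with left endpoint $n-m$ necessarily crosses the cut line. Hence, by the definition of $\mathcal{U}_{\alpha}$, which always selects the left endpoint of a cup crossing the cut line, every $U\in\mathcal{U}_{\alpha}$ contains $n-m$. This means only the $i\in U$ branch of the Dunkl operator formula is relevant to the computation.

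Next I would apply that branch with $i=n-m$. The crucial simplification is that the summation $\sum_{j=i+1,\,j\notin U}^{n-m}l_{U\setminus\{i\}\cup\{j\}}$ is empty, because there is no index strictly larger than $n-m$. Thus $\mathcal{D}_{n-m}^{(\boldsymbol{\xi})}$ acts on each $l_U$ with $U\in\mathcal{U}_{\alpha}$ as multiplication by the scalar $\xi_d+(n-m)-n+m-\lvert\{1\le j<n-m: j\notin U\}\rvert$, with no off-diagonal contribution. In contrast to \autoref{lem:action_dunkl_ray}, there is no sign cancellation to arrange; the entire content is this scalar.

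The main (and essentially only) point to check is that this scalar is the same for every $U\in\mathcal{U}_{\alpha}$, so that it factors out of the sum $L_{\alpha}=\sum_{U}(-1)^{\Lambda_{\alpha}(U)}l_U$. For this I would use that $\lvert U\rvert=d$ is constant across $\mathcal{U}_{\alpha}$, since each $U$ selects exactly one endpoint per cup and $C(\alpha)$ has $d$ cups. As $n-m\in U$, the set $U$ meets $\{1,\dots,n-m-1\}$ in exactly $d-1$ elements, so $\lvert\{1\le j<n-m: j\notin U\}\rvert=(n-m-1)-(d-1)=n-m-d$. Substituting $\xi_d=n+1-d$ from \autoref{thm:stability_degenerate} turns the scalar into $(n+1-d)-(n-m-d)=m+1$, independently of $U$.

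Finally, by linearity $x_{n-m}\cdot L_{\alpha}=(m+1)\sum_{U\in\mathcal{U}_{\alpha}}(-1)^{\Lambda_{\alpha}(U)}l_U=(m+1)L_{\alpha}$, as claimed. I do not anticipate a genuine obstacle: the argument reduces to a single evaluation of the diagonal Dunkl coefficient, and the only bookkeeping is the cardinality count giving $n-m-d$, which cancels cleanly against the chosen value of $\xi_d$.
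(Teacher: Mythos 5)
Your proposal is correct and follows essentially the same route as the paper: both note that $n-m\in U$ for every $U\in\mathcal{U}_{\alpha}$ because the cup at $n-m$ must cross the cut line, observe that the off-diagonal sum in the Dunkl operator is empty for $i=n-m$, and evaluate the diagonal coefficient as $\xi_d-(n-m-d)=m+1$ using $\lvert U\rvert=d$. The extra bookkeeping you spell out (the empty-sum observation and the cardinality count) is implicit in the paper's one-line computation.
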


\begin{proof}
    Since $n-m$ is the left endpoint of a cup in $C(\alpha)$, the corresponding right endpoint is on the right of the cut line. Therefore $n-m\in U$ for all $U\in \mathcal{U}_{\alpha}$ and the definition of the action of the degenerate affine Hecke algebra in terms of Dunkl operators gives
    \begin{gather*}
        x_{n-m}\cdot l_{U} = \left(\xi_d-\lvert\{1 \leq i < n-m\mid i\not\in U\}\rvert\right) l_U = (n+1-d - (n-m-d))l_U = (m\!+\!1)l_U
    \end{gather*}
    for all $U\in\mathcal{U}_{\alpha}$. This implies that $x_{n-m}\cdot L_{\alpha} = (m+1)L_{\alpha}$.
\end{proof}

In order to state the last case, we need to introduce some notation. Suppose that $n-m$ is the right endpoint of a cup in $C(\alpha)$. Denote by $a_1< \cdots < a_r$ the left endpoints of cups in $C(\alpha)$ with right endpoints among the last $m$ points, \emph{i.e.} right of the cut line. We also let $a_{r+1}$ be the left endpoint of the cup of $C(\alpha)$ with right endpoint $n-m$.

Therefore, the weight $\alpha$ satisfies the following: 
\begin{itemize}
    \item $\alpha_{j} = \up$ for $n-m+1 \leq j \leq n-m+r$ and $\alpha_{j} = \down$ for $n-m+r+1 \leq j \leq n$,
    \item $\alpha_{j} = \down$ for $j\in \{a_1,\ldots,a_{r+1}\}$ and $\alpha_{n-m} = \up$.
\end{itemize}

For $1\leq i \leq r$ define a $\Delta$-weight $\alpha^{i}$ of type $(n-k,k,m)$ by:
\begin{itemize}
    \item $\alpha^{i}_{n-m} = \down$ and $\alpha^{i}_{a_{i+1}} = \up$,
    \item $\alpha^{i}_j = \alpha_j$ for $j\neq a_{i+1}, n-m$.
\end{itemize}
In terms of cup diagrams, the diagram $C(\alpha^i)$ is obtained from $C(\alpha)$ by moving the cup joining $a_{r+1}$ and $n-m$ to a cup joining $a_{i}$ and $a_{i+1}$, and by moving the left endpoints of the $r-i+1$ rightmost cups crossing the cut line.

If the number of right endpoints of cups crossing the cut line in $C(\alpha)$ is smaller than $m$, that is $r<m$, we also define a weight $\alpha^{0}$ by:
\begin{itemize}
    \item $\alpha^{0}_{n-m} = \down$ and $\alpha^{0}_{n-m+r+1} = \up$,
    \item $\alpha^{0}_j = \alpha_j$ for $j\neq a_{1}, n-m$.
\end{itemize}
In terms of cup diagram, all the points $a_2,\ldots,a_{r+1}$ and $n-m$ are left endpoints of a cup and $a_1$ is the endpoint of a ray.

\begin{exe}
    Let us take $\alpha=\down\down\up\down\down\up\vert\!\up\up\down$. The associated cup diagram is then
    \begin{gather*}
            C(\alpha) = \xy (0,0)*{
            \begin{tikzpicture}[scale=0.65]
            \draw[thick,densely dotted] (-.5,1) to (8.5,1);
            \draw[very thick] (1,1) to[out=-90,in=180] (1.5,.5) to[out=0,in=-90] (2,1);
            \draw[mygreen,very thick] (4,1) to[out=-90,in=180] (4.5,.5) to[out=0,in=-90] (5,1);
            \draw[very thick] (3,1) to[out=-90,in=180] (4.5,0) to[out=0,in=-90] (6,1);
            \draw[very thick] (0,1) to[out=-90,in=180] (3.5,-0.5) to[out=0,in=-90] (7,1);
            \draw[very thick] (8,1) -- (8,-1);
            \draw[black,fill=black] (0,1) circle (2.0pt);
            \draw[black,fill=black] (1,1) circle (2.0pt);
            \draw[black,fill=black] (2,1) circle (2.0pt);
            \draw[black,fill=black] (3,1) circle (2.0pt);
            \draw[black,fill=black] (4,1) circle (2.0pt);
            \draw[black,fill=black] (5,1) circle (2.0pt);
            \draw[black,fill=black] (6,1) circle (2.0pt);
            \draw[black,fill=black] (7,1) circle (2.0pt);
            \draw[black,fill=black] (8,1) circle (2.0pt);
            \draw[cutline] (5.5,-1) to (5.5,1.15);
            \end{tikzpicture}
            }\endxy 
        \end{gather*}

        We have $r=2$ and $a_1=1$, $a_2=4$ and $a_3=5$. Therefore, the weights $\alpha^{2}$, $\alpha^{1}$ and $\alpha^{0}$ and their respective associated cup diagram are given by
        \begin{align*}
            \alpha^{2}&=\down\down\up\down\up\down\vert\!\up\up\down ,& C(\alpha^{2})&=\xy (0,0)*{
            \begin{tikzpicture}[scale=0.65]
            \draw[thick,densely dotted] (-.5,1) to (8.5,1);
            \draw[very thick] (1,1) to[out=-90,in=180] (1.5,.5) to[out=0,in=-90] (2,1);
            \draw[mygreen,very thick] (3,1) to[out=-90,in=180] (3.5,.5) to[out=0,in=-90] (4,1);
            \draw[very thick] (5,1) to[out=-90,in=180] (5.5,.5) to[out=0,in=-90] (6,1);
            \draw[very thick] (0,1) to[out=-90,in=180] (3.5,-0.5) to[out=0,in=-90] (7,1);
            \draw[very thick] (8,1) -- (8,-1);
            \draw[black,fill=black] (0,1) circle (2.0pt);
            \draw[black,fill=black] (1,1) circle (2.0pt);
            \draw[black,fill=black] (2,1) circle (2.0pt);
            \draw[black,fill=black] (3,1) circle (2.0pt);
            \draw[black,fill=black] (4,1) circle (2.0pt);
            \draw[black,fill=black] (5,1) circle (2.0pt);
            \draw[black,fill=black] (6,1) circle (2.0pt);
            \draw[black,fill=black] (7,1) circle (2.0pt);
            \draw[black,fill=black] (8,1) circle (2.0pt);
            \draw[cutline] (5.5,-1) to (5.5,1.15);
            \end{tikzpicture}
            }\endxy \\
            \alpha^{1}&=\down\down\up\up\down\down\vert\!\up\up\down ,& C(\alpha^{1})&=\xy (0,0)*{
            \begin{tikzpicture}[scale=0.65]
            \draw[thick,densely dotted] (-.5,1) to (8.5,1);
            \draw[very thick] (1,1) to[out=-90,in=180] (1.5,.5) to[out=0,in=-90] (2,1);
            \draw[very thick] (5,1) to[out=-90,in=180] (5.5,.5) to[out=0,in=-90] (6,1);
            \draw[very thick] (4,1) to[out=-90,in=180] (5.5,0) to[out=0,in=-90] (7,1);
            \draw[mygreen,very thick] (0,1) to[out=-90,in=180] (1.5,0) to[out=0,in=-90] (3,1);
            \draw[very thick] (8,1) -- (8,-1);
            \draw[black,fill=black] (0,1) circle (2.0pt);
            \draw[black,fill=black] (1,1) circle (2.0pt);
            \draw[black,fill=black] (2,1) circle (2.0pt);
            \draw[black,fill=black] (3,1) circle (2.0pt);
            \draw[black,fill=black] (4,1) circle (2.0pt);
            \draw[black,fill=black] (5,1) circle (2.0pt);
            \draw[black,fill=black] (6,1) circle (2.0pt);
            \draw[black,fill=black] (7,1) circle (2.0pt);
            \draw[black,fill=black] (8,1) circle (2.0pt);
            \draw[cutline] (5.5,-1) to (5.5,1.15);
            \end{tikzpicture}
            }\endxy \\
            \alpha^{0}&=\down\down\up\down\down\down\vert\!\up\up\up ,& C(\alpha^{0})&=\xy (0,0)*{
            \begin{tikzpicture}[scale=0.65]
            \draw[thick,densely dotted] (-.5,1) to (8.5,1);
            \draw[very thick] (1,1) to[out=-90,in=180] (1.5,.5) to[out=0,in=-90] (2,1);
            \draw[very thick] (3,1) to[out=-90,in=180] (5.5,-.5) to[out=0,in=-90] (8,1);
            \draw[very thick] (4,1) to[out=-90,in=180] (5.5,0) to[out=0,in=-90] (7,1);
            \draw[very thick] (5,1) to[out=-90,in=180] (5.5,0.5) to[out=0,in=-90] (6,1);
            \draw[mygreen,very thick] (0,1) -- (0,-1);
            \draw[black,fill=black] (0,1) circle (2.0pt);
            \draw[black,fill=black] (1,1) circle (2.0pt);
            \draw[black,fill=black] (2,1) circle (2.0pt);
            \draw[black,fill=black] (3,1) circle (2.0pt);
            \draw[black,fill=black] (4,1) circle (2.0pt);
            \draw[black,fill=black] (5,1) circle (2.0pt);
            \draw[black,fill=black] (6,1) circle (2.0pt);
            \draw[black,fill=black] (7,1) circle (2.0pt);
            \draw[black,fill=black] (8,1) circle (2.0pt);
            \draw[cutline] (5.5,-1) to (5.5,1.15);
            \end{tikzpicture}
            }\endxy \\
        \end{align*}
    We have drawn in green the cup that \say{moves along the diagrams when going from $\alpha$ to $\alpha^i$}.
\end{exe}

\begin{lem}\label{lem:action_dunkl_right}
    Suppose that $n-m$ is the right endpoint of a cup in $C(\alpha)$, and keep the notations from above. Then 
    \begin{gather}\label{eq:action_dunkl_right}
        x_{n-m} \cdot L_{\alpha} = -\sum_{i=0}^r(m-r+i)L_{\alpha^{i}}.
    \end{gather}
\end{lem}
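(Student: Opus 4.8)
The plan is to expand both sides in the monomial basis $(l_U)_U$ of $H_*((\mathbb S^2)^{n-m})$ and match coefficients, organising the computation so that the only genuine input is a telescoping identity. Throughout I write $d$ for the number of cups of $C(\alpha)$, so that every $U\in\mathcal U_\alpha$ has $|U|=d$, and I fix once and for all the datum $\beta$ of a choice of one endpoint from each cup of $C(\alpha)$ lying entirely to the left of the cut line and \emph{not} equal to the cup $(a_{r+1},n-m)$; I write $s(\beta)$ for the number of larger (right) endpoints selected by $\beta$, so that $(-1)^{s(\beta)}$ records their contribution to $\Lambda_\alpha$. Since the $r$ crossing cups contribute their left endpoints $a_1<\dots<a_r$ to every $U$, while their right endpoints lie past the cut line (hence never lie in $U\subseteq\{1,\dots,n-m\}$ and never contribute to $\Lambda_\alpha$), the cup $(a_{r+1},n-m)$ is the only source of a binary choice beyond $\beta$, and $\Lambda_\alpha(U)=s(\beta)+[\,n-m\in U\,]$.

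First I would split $\mathcal U_\alpha$ according to whether $n-m\in U$. If $n-m\in U$, then $|U|=d$ forces $|\{j<n-m:j\notin U\}|=n-m-d$, and since there is no index $j>n-m$ the first branch of the Dunkl formula collapses, using $\xi_d=n+1-d$, to
\[ x_{n-m}\cdot l_U=\bigl(\xi_d-(n-m-d)\bigr)l_U=(m+1)l_U. \]
If instead $n-m\notin U$ (equivalently $a_{r+1}\in U$), then the constant term of the second branch vanishes and $x_{n-m}\cdot l_U=\sum_{j\in U}l_{U\setminus\{j\}\cup\{n-m\}}$, the sum ranging over all of $U$ because every element of $U$ is strictly below $n-m$.

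The key simplification is that in this second case the terms where $j$ is an endpoint selected by $\beta$ cancel in pairs: flipping the chosen endpoint of a fixed entirely-left cup $\{p,q\}$ changes $s(\beta)$ by one but produces the identical set $U\setminus\{j\}\cup\{n-m\}$, giving a sign-reversing involution. Hence only the terms with $j\in\{a_1,\dots,a_{r+1}\}$ survive. Writing $W_t:=(\{a_1,\dots,a_{r+1}\}\setminus\{a_t\})\cup\{n-m\}\cup\beta$ for $1\le t\le r+1$ and collecting the two cases, I would obtain
\[ x_{n-m}\cdot L_\alpha=\sum_{\beta}(-1)^{s(\beta)}\Bigl(\,\sum_{t=1}^{r}l_{W_t}-m\,l_{W_{r+1}}\Bigr), \]
where the coefficient $-m=-(m+1)+1$ records the cancellation of the $j=a_{r+1}$ term of the second case against the scalar $(m+1)$ coming from the first case (both contribute to $l_{W_{r+1}}$).

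It then remains to read off the right-hand side. Using the stated description of $C(\alpha^i)$ as $C(\alpha)$ with the cup $(a_{r+1},n-m)$ moved to $(a_i,a_{i+1})$, the set $\mathcal U_{\alpha^i}$ is governed by the same $\beta$, together with the forced left endpoints of the crossing cups (now including $n-m$) and the single binary choice from $(a_i,a_{i+1})$; this yields the telescoping expressions
\[ L_{\alpha^i}=\sum_{\beta}(-1)^{s(\beta)}\bigl(l_{W_{i+1}}-l_{W_i}\bigr)\quad(1\le i\le r),\qquad L_{\alpha^0}=\sum_{\beta}(-1)^{s(\beta)}l_{W_1}, \]
the last because in $C(\alpha^0)$ the vertex $a_1$ becomes a ray and so drops out of every $U$. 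Substituting into $-\sum_{i=0}^{r}(m-r+i)L_{\alpha^i}$ and summing the telescope, the coefficient of $(-1)^{s(\beta)}l_{W_t}$ becomes $1$ for $2\le t\le r$ and $-m$ for $t=r+1$; at $t=1$ the telescope contributes $m-r+1$, which combines with the $-(m-r)$ coming from the $\alpha^0$ term to give exactly $1$. This matches the formula displayed above, and when $r=m$ the coefficient $m-r$ vanishes, so $\alpha^0$ is correctly absent. The main obstacle is purely organisational: keeping the signs $\Lambda_\alpha$ and the index sets $W_t$ straight through the two cases and justifying the pairwise cancellation of the $\beta$-endpoint terms; once everything is reduced to the $W_t$, the matching is the one-line telescoping identity above.
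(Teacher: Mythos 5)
Your proof is correct and follows essentially the same route as the paper's: the same Dunkl-operator expansion split according to whether $n-m\in U$, the same sign-reversing cancellation over the endpoints of cups lying entirely left of the cut line, and the same telescoping over the family $\alpha^0,\ldots,\alpha^r$ (your sets $\{W_{i+1}\}_\beta$ are exactly the paper's $\mathcal U_i$). The only difference is organizational — you parametrize everything explicitly by the choice $\beta$ and the sets $W_t$ rather than working with set-level bijections — and the verification goes through as you describe.
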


\begin{rem}
    In \eqref{eq:action_dunkl_right}, the term $L_{\alpha^{0}}$ is not defined if $m=r$. Nonetheless, in the case $m=r$, this term appears with a coefficient $0$ and it is unnecessary to distinguish the cases $m=r$ and $m\neq r$.
\end{rem}

\begin{exe}
    In the above example, we obtain $x_{6}\cdot L_{\alpha} = -3L_{\alpha^2}-2L_{\alpha^1}-L_{\alpha^0}$.
\end{exe}

\begin{proof}[Proof of \autoref{lem:action_dunkl_right}]
    First, we compute the action of $x_{n-m}$ on $L_{\alpha}$ following the definition in terms of Dunkl operators:
    \begin{gather*}
        x_{n-m}\cdot L_{\alpha} = \sum_{\substack{U\in\mathcal{U}_{\alpha}\\ n-m\in U}}(-1)^{\Lambda_{\alpha}(U)}(m+1)l_U 
        + \sum_{\substack{U\in\mathcal{U}_{\alpha}\\ n-m\not\in U}}(-1)^{\Lambda_{\alpha}(U)}\sum_{i\in U} l_{U\setminus\{i\}\cup\{n-m\}},
    \end{gather*}
    where we have used that $\xi_d=n+1-d$ and $\lvert\{1\leq i < n-m\mid i\not\in U\}\rvert = n-m-d$ for $U\in \mathcal{U}_{\alpha}$ with $n-m\not\in U$.

    In order to compare with the right hand side of the equality to prove, we introduce some subsets of $\{1,\ldots,n-m\}$. For $1\leq i \leq r$ we define
    \begin{gather*}
        \mathcal{U}_i = \{U \in \mathcal{U}_{\alpha^{i}}\mid a_i \in U\} = \{U \in \mathcal{U}_{\alpha^{i}}\mid a_{i+1} \not\in U\},
    \end{gather*}
    and also $\mathcal{U}_{0} = \mathcal{U}_{\alpha^{0}}$ (if $\alpha^{0}$ is not defined, then we set $\mathcal{U}_{0} = \emptyset$). It is then straightforward to check that, for all $1\leq i \leq r$, we have $\mathcal{U}_{\alpha^{i}} = \mathcal{U}_{i-1} \cup \mathcal{U}_i$, and that these unions are disjoint. Moreover, an element $U\in \mathcal{U}_i$ is as well in $\mathcal{U}_{\alpha^{i+1}}$ and we have $\Lambda_{\alpha^{i+1}}(U) = \Lambda_{\alpha^{i}}(U) + 1$. Therefore
    \begin{align*}
        \sum_{i=0}^{r}(m-r+i)&L_{\alpha^{i}} 
        = \sum_{i=1}^{r}(m-r+i)\sum_{U\in \mathcal{U}_{i-1}}(-1)^{\Lambda_{\alpha^{i}}(U)}l_U + \sum_{i=0}^{r}(m-r+i)\sum_{U\in \mathcal{U}_{i}}(-1)^{\Lambda_{\alpha^{i}}(U)}l_U\\
        =& -\sum_{i=0}^{r-1}(m-r+i+1)\sum_{U\in \mathcal{U}_{i}}(-1)^{\Lambda_{\alpha^{i}}(U)}l_U + \sum_{i=0}^{r}(m-r+i)\sum_{U\in \mathcal{U}_{i}}(-1)^{\Lambda_{\alpha^{i}}(U)}l_U\\
        =& m\sum_{U\in \mathcal{U}_{r}}(-1)^{\Lambda_{\alpha^{r}}(U)}l_U - \sum_{i=0}^{r-1}\sum_{U\in \mathcal{U}_{i}}(-1)^{\Lambda_{\alpha^{i}}(U)}l_U \\
        =& (m+1)\sum_{U\in \mathcal{U}_{r}}(-1)^{\Lambda_{\alpha^{r}}(U)}l_U - \sum_{i=0}^{r}\sum_{U\in \mathcal{U}_{i}}(-1)^{\Lambda_{\alpha^{i}}(U)}l_U.
    \end{align*}

    But we have $\mathcal{U}_{r} = \{U\in \mathcal{U}_{\alpha} \mid n-m\in U\}$ and if $U\in \mathcal{U}_{r}$ then $\Lambda_{\alpha}(U) = \Lambda_{\alpha^{r}}(U)+1$. It remains to prove the following equality:
    \begin{gather*}
        \sum_{\substack{U\in\mathcal{U}_{\alpha}\\ n-m\not\in U}}(-1)^{\Lambda_{\alpha}(U)}\sum_{i\in U} l_{U\setminus\{i\}\cup\{n-m\}} = \sum_{i=0}^{r}\sum_{U\in \mathcal{U}_{i}}(-1)^{\Lambda_{\alpha^{i}}(U)}l_U.
    \end{gather*}

    We simplify the left hand side as follows. Suppose that $i$ and $j$ are connected by a cup in $C(\alpha)$. Then we have a bijection 
    \begin{gather*}
        \{U\in \mathcal{U}_{\alpha}\mid n-m \not\in U\text{ and } i\in U\} \to \{U\in \mathcal{U}_{\alpha}\mid n-m \not\in U\text{ and } j\in U\}
    \end{gather*}
    given by $U\mapsto U\setminus\{i\}\cup\{j\}$, and $\Lambda_{\alpha}(U\setminus\{i\}\cup\{j\}) = \Lambda_{\alpha}(U) \pm 1$. Therefore
    \begin{gather*}
        \sum_{\substack{U\in\mathcal{U}_{\alpha}\\ n-m\not\in U}}(-1)^{\Lambda_{\alpha}(U)}\sum_{i\in U} l_{U\setminus\{i\}\cup\{n-m\}}
        =
        \sum_{\substack{U\in\mathcal{U}_{\alpha}\\ n-m\not\in U}}(-1)^{\Lambda_{\alpha}(U)}\sum_{\substack{i=1\\a_i\in U}}^{r+1} l_{U\setminus\{a_i\}\cup\{n-m\}}, 
    \end{gather*}
    since we can cancel the other terms by pairs that carry opposite signs. Finally, for all $1\leq i \leq r+1$, we have a bijection between $\{U\in\mathcal{U}_{\alpha}\mid n-m\not\in U\text{ and } a_i\in U\}$ and $\mathcal{U}_{i-1}$ given by $U\mapsto U\setminus\{a_i\}\cup\{n-m\}$ which moreover satisfies $\Lambda_{\alpha}(U) = \Lambda_{\alpha^{i}}(U\setminus\{a_i\}\cup\{n-m\})$. Therefore
    \begin{gather*}
        \sum_{\substack{U\in\mathcal{U}_{\alpha}\\ n-m\not\in U}}(-1)^{\Lambda_{\alpha}(U)}\sum_{\substack{i=1\\a_i\in U}}^{r+1} l_{U\setminus\{a_i\}\cup\{n-m\}}
        =
        \sum_{i=0}^{r}\sum_{U\in \mathcal{U}_{i}}(-1)^{\Lambda_{\alpha^{i}}(U)}l_U
    \end{gather*}
    and the proof is complete.
\end{proof}

\begin{proof}[Proof of \autoref{thm:stability_degenerate}]
The theorem follows from \autoref{prop:action_symmetric}, \autoref{lem:action_dunkl_ray}, \autoref{lem:action_dunkl_left} and \autoref{lem:action_dunkl_right}, together with the fact that the degenerate affine Hecke algebra is generated by the symmetric group and by $x_{n-m}$.
\end{proof}

\subsection{Comparison with a tensor space}

The goal is to give a proof of the irreducibility of the action of $H_{n-m}$ on the cohomology of the $\Delta$-Springer variety, using Schur--Weyl duality. We consider the Lie algebra $\mathfrak{gl}_2$ with basis given by $(h_1,h_2,e,f)$ where
\[
    h_1=\begin{pmatrix}1 & 0 \\ 0 & 0 \end{pmatrix},\ 
    h_2=\begin{pmatrix}0 & 0 \\ 0 & 1 \end{pmatrix},\ 
    e=\begin{pmatrix}0 & 1 \\ 0 & 0 \end{pmatrix},\ f=\begin{pmatrix}0 & 0 \\ 1 & 0 \end{pmatrix}.
\]
We denote by $\Omega$ its Casimir element, that is $\Omega = h_1\otimes h_1 + h_2\otimes h_2 + e\otimes f + f\otimes e$. We denote by $\Omega_{i,j}\in \mathfrak{gl}_2^{\otimes n}$, for $1\leq i < j \leq n$, the Casimir element for the $i$th and $j$th factor.

We also consider the natural representation $V$ with basis $(v_{+},v_{-})$ with the following $\mathfrak{gl}_2$-action:
\begin{align*}
    h_1\cdot v_+ &= v_+, & h_2\cdot v_+ &= 0, & e\cdot v_+ &= 0, & f \cdot v_+ &= v_-,\\
    h_1\cdot v_- &= 0, & h_2\cdot v_- &= v_-, & e\cdot v_- &= v_+, & f \cdot v_- &= 0.
\end{align*}
The $m$-th symmetric power $S^{m}V$ of the natural representation has $(v_0,\ldots,v_m)$ as a basis and the following $\mathfrak{gl}_2$-action:
\begin{align*}
    h_1\cdot v_i &= (m-i)v_i, & h_2\cdot v_i &=iv_i & e\cdot v_i &= iv_{i-1}, & f\cdot v_i &= (m-i)v_{i+1}.
\end{align*}

The main player will be the tensor space $V^{\otimes(n-m)}\otimes S^{m}V$, which is naturally a $\mathfrak{gl}_2$-module. The following theorem has been proved in \cite[Proposition 7.1]{suz98}.

\begin{thm}
    \begin{enumerate}
        \item The mapping $s_i\mapsto \Omega_{i,i+1}$ and $x_i \mapsto -\sum_{i<j\leq n-m+1}\Omega_{i,j}+m\id$ defines an action of $H_{n-m}$ on $V^{\otimes(n-m)}\otimes S^mV$, which commutes with the action of $\mathfrak{gl}_2$.
        \item The maps $H_{n-m}\to\End_{\mathfrak{gl}_2}(V^{\otimes(n-m)}\otimes S^mV)$ and $U(\mathfrak{gl}_2)\to\End_{H_{n-m}}(V^{\otimes(n-m)}\otimes S^mV)$ are surjective.
        \item As an $(U(\mathfrak{gl}_2),H_{n-m})$-bimodule, we have
        \[
        V^{\otimes(n-m)}\otimes S^mV \simeq \bigoplus_{d=0}^{\min(n-m,\lfloor n/2 \rfloor)}V_{(n-d,d)}\boxtimes L_d,
        \]
        Where $V_{(n-d,d)}$ is the simple highest $\mathfrak{gl}_2$-weight module of highest weight $(n-d,d)$ and $L_d$ is a simple $H_{n-m}$-module.
    \end{enumerate}
\end{thm}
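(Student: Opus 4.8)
These three assertions constitute a degenerate-affine Schur--Weyl duality, and I would organize the proof around the single observation that on two copies of the natural module the Casimir acts as the flip. Write $T=V^{\otimes(n-m)}\otimes S^mV$ and label the tensor slots $1,\dots,n-m,n-m+1$, the last being $S^mV$. Expanding $\Omega=\sum_{a,b}E_{ab}\otimes E_{ba}$ in matrix units, one checks $\Omega(v_c\otimes v_d)=v_d\otimes v_c$ on $V\otimes V$, so for $1\le i<n-m$ the operator $\sigma_i\mapsto\Omega_{i,i+1}$ is exactly the permutation $P_{i,i+1}$ of adjacent copies of $V$. The relations $\sigma_i^2=1$, the far commutativity, and the braid relations are then inherited from the symmetric group. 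The operators $x_i=-\sum_{i<j\le n-m+1}\Omega_{i,j}+m\,\id$ are Arakawa--Suzuki (Cherednik--Dunkl) operators; their mutual commutativity $[x_i,x_j]=0$ is the standard consequence of the infinitesimal braid relations $[\Omega_{ij}+\Omega_{ik},\Omega_{jk}]=0$ and $[\Omega_{ij},\Omega_{k\ell}]=0$ for disjoint indices, both valid because $\Omega$ is $\mathfrak{gl}_2$-invariant. The mixed relation $\sigma_ix_j=x_j\sigma_i$ for $j\ne i,i+1$ holds since $P_{i,i+1}$ commutes with every $\Omega_{k\ell}$ not involving $i,i+1$ and permutes the others in matching pairs. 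For the key relation $x_{i+1}\sigma_i-\sigma_ix_i=1$, expand with $\sigma_i=\Omega_{i,i+1}=P_{i,i+1}$: using $P_{i,i+1}\Omega_{i,j}=\Omega_{i+1,j}P_{i,i+1}$ for $j\ne i,i+1$, the terms with $j>i+1$ cancel and the constants $m\,P_{i,i+1}$ cancel, leaving only $P_{i,i+1}\Omega_{i,i+1}=P_{i,i+1}^2=\id$, which is exactly $1$. This proves part (1); commutation with $\mathfrak{gl}_2$ follows because $\Omega$ lies in $\big(U(\mathfrak{gl}_2)^{\otimes2}\big)^{\mathfrak{gl}_2}$, so each $\Omega_{ij}$, and hence each $\sigma_i$ and $x_i$, commutes with the diagonal $\mathfrak{gl}_2$-action on $T$.

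For parts (2) and (3) I would first decompose $T$ as a $\mathfrak{gl}_2$-module. Iterated Clebsch--Gordan --- equivalently classical $\mathfrak{sl}_2$ Schur--Weyl applied to $V^{\otimes(n-m)}$ followed by Pieri for the tensor with $S^mV=V_{(m,0)}$ --- shows that exactly the modules $V_{(n-d,d)}$ for $0\le d\le\min(n-m,\lfloor n/2\rfloor)$ appear, and identifies the multiplicity spaces $M_d:=\Hom_{\mathfrak{gl}_2}(V_{(n-d,d)},T)$. Since $\mathfrak{gl}_2$ is reductive and $T$ is finite-dimensional, the image of $U(\mathfrak{gl}_2)$ in $\End(T)$ is a semisimple subalgebra whose centralizer is precisely $A:=\End_{\mathfrak{gl}_2}(T)\cong\prod_d\End(M_d)$. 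Granting the surjectivity $H_{n-m}\twoheadrightarrow A$, the classical double-centralizer theorem then yields at once the reverse surjectivity $U(\mathfrak{gl}_2)\twoheadrightarrow\End_{H_{n-m}}(T)$ and the bimodule decomposition $T\cong\bigoplus_d V_{(n-d,d)}\boxtimes L_d$, in which $L_d=M_d$ is an irreducible $A$-module and therefore an irreducible $H_{n-m}$-module.

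The main obstacle is exactly this surjectivity onto the full $\mathfrak{gl}_2$-centralizer $A$. I would establish it following Suzuki: acting by $H_{n-m}$ on a highest-weight vector in each $V_{(n-d,d)}$-isotypic component, one identifies the resulting cyclic module with a standard (induced) $H_{n-m}$-module, shows its image in $\End(M_d)$ is irreducible, and checks that distinct $d$ produce non-isomorphic simples; a dimension count then forces the images to exhaust $A=\prod_d\End(M_d)$. The representation-theoretic input needed here --- that the operators above genuinely define an $H_{n-m}$-action and that the relevant standard modules behave as claimed --- is the content of \cite[Proposition 7.1]{suz98}, to which we defer for the full argument.
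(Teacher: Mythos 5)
Your outline is correct, and it is worth noting that the paper offers no proof of this statement at all: it simply cites Suzuki \cite[Proposition 7.1]{suz98} and, in the following remark, observes that Suzuki's conventions for the $x_i$ differ from the ones used here (they are related by the automorphism $\sigma_i\mapsto\sigma_{n-m-i}$, $x_i\mapsto -x_{n-m+1-i}$ together with a common shift). Your direct verification of part (1) --- the identification $\Omega_{i,i+1}=P_{i,i+1}$ on $V\otimes V$, commutativity of the $x_i$ via the infinitesimal braid relations, and the computation showing $x_{i+1}\sigma_i-\sigma_ix_i=P_{i,i+1}\Omega_{i,i+1}=\id$ after the $j>i+1$ terms and the constants cancel --- is sound and has the merit of checking the relations for the paper's normalization directly, sidestepping the translation of conventions. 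Your treatment of (2) and (3) via the $\mathfrak{gl}_2$-decomposition of $V^{\otimes(n-m)}\otimes S^mV$ (the range $0\le d\le\min(n-m,\lfloor n/2\rfloor)$ is exactly right) and the double centralizer theorem is the standard route, and like the paper you ultimately defer the one genuinely hard input, the surjectivity of $H_{n-m}\to\End_{\mathfrak{gl}_2}(V^{\otimes(n-m)}\otimes S^mV)$, to Suzuki's analysis of the standard modules. So the two arguments rest on the same external reference; yours simply makes explicit the elementary parts that the paper leaves implicit.
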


This theorem provides a construction of certain simple $H_{n-m}$-modules, which will be compared to the modules obtained via the cohomology of the $\Delta$-Springer variety.

\begin{rem}
    In \cite{suz98}, the action of $x_i$ is slightly different. We recover ours after applying the automorphism $\sigma_i\mapsto\sigma_{n-m-i}$, $x_i\mapsto -x_{n-m+1-i}$ of $H_{n-m}$ and shifting the action of all the $x_i$ by a common constant.

    The simple module $L_{d}$ is a calibrated simple module, in the sense of \cite{ram03}, and is then indexed by a placed skew shape. It can be checked that the corresponding skew partition is $(n-d,d)/(m)$.
\end{rem}

First, to each $\ba\in\mathbb{B}_{n-d,d,m}$ we associate a highest weight vector $p_{\ba}$ of weight $(n-d,d)$ in $V^{\otimes(n-m)}\otimes S^{m}V$. We convert each cup left of the cut line into the vector $v_{-}\otimes v_{+} - v_{+}\otimes v_{-}$, each ray left of the cut line into $v_{+}$, and the remaining $r$ cups with endpoints on both sides of the cut line into the vector
\[
    w_r=\sum_{I\subset\{1,\ldots,r\}}(-1)^{\lvert I \rvert}v_{I}\otimes v_{\lvert I \rvert},
\]
where $v_{I}\in V^{\otimes r}$ has its $i$-th tensorand equal to $v_{+}$ if $i\in I$ and to $v_{-}$ otherwise. We then place these vectors in the appropriate position in $V^{\otimes(n-m)}\otimes S^{m}V$: the $n-m$ endpoints left of the cut line correspond to the various tensorands $V$ and the $m$ endpoints right of the cut line correspond to the tensorand $S^mV$. In this process, we completely ignore the rays right of the cut line.

\begin{exe}
The vector corresponding to the diagram
\begin{equation*}
\ba = \xy (0,0)*{
\begin{tikzpicture}[scale=0.65]
\draw[thick,densely dotted] (-.5,1) to (6.5,1);
\draw[black,fill=black] (0,1) circle (2.0pt);
\draw[black,fill=black] (1,1) circle (2.0pt);
\draw[black,fill=black] (2,1) circle (2.0pt);
\draw[black,fill=black] (3,1) circle (2.0pt);
\draw[black,fill=black] (4,1) circle (2.0pt);
\draw[black,fill=black] (5,1) circle (2.0pt);
\draw[black,fill=black] (6,1) circle (2.0pt);
\draw[very thick] (0,1) to[out=-90,in=180] (.5,.5) to[out=0,in=-90] (1,1);
\draw[very thick] (2,1) to[out=-80,in=180] (3.5,-.3) to[out=0,in=-100] (5,1);
\draw[very thick] (3,1) to[out=-90,in=180] (3.5,.5) to[out=0,in=-90] (4,1);
\draw[very thick] (6,1) -- (6,-1);
\draw[cutline] (3.5,-1) to (3.5,1.15);
\end{tikzpicture}
}\endxy
\end{equation*}
is
\begin{align*}
    (v_-\otimes v_+ - v_+\otimes v_-)\otimes (v_-\otimes v_-\otimes v_0
    -v_+\otimes v_-\otimes v_1
    -v_-\otimes v_+\otimes v_1
    +v_+\otimes v_+\otimes v_2).
\end{align*}
\end{exe}

\begin{lem}
    Let $\ba\in\mathbb{B}_{n-d,d,m}$. Then $p_{\ba}$ is a highest weight vector of weight $(n-d,d)$.
\end{lem}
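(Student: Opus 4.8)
The plan is to verify directly the two conditions defining a highest weight vector: that $p_{\ba}$ is a $\mathfrak{gl}_2$-weight vector of weight $(n-d,d)$, and that it is annihilated by the raising operator $e$ acting as a derivation on $V^{\otimes(n-m)}\otimes S^mV$.

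\emph{Weight computation.} First I would read off the weight from the building blocks. Let $\ell$ be the number of rays of $\ba$ left of the cut line and $r$ the number of cups crossing the cut line, so $\ba$ has $d-r$ cups entirely to the left. Since the $m$ points right of the cut line consist of the $r$ right endpoints of crossing cups together with $m-r$ rays, and $\ba$ has $n-2d$ rays in total, counting gives $\ell=n-2d-m+r$. Now $v_+$ has weight $(1,0)$, the cup vector $v_-\otimes v_+-v_+\otimes v_-$ has weight $(1,1)$, and each term $v_I\otimes v_{\lvert I\rvert}$ of $w_r$ has weight $(\lvert I\rvert+(m-\lvert I\rvert),(r-\lvert I\rvert)+\lvert I\rvert)=(m,r)$, so $w_r$ has weight $(m,r)$. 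Summing the contributions of the $d-r$ left cups, the $\ell$ left rays, and of $w_r$ yields
\[
\bigl((d-r)+\ell+m,\,(d-r)+r\bigr)=(n-d,d),
\]
using the identity for $\ell$. The weight is dominant because $2d\le n$ for a crossingless diagram with $d$ cups.

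\emph{Reduction to blocks.} To show $e\cdot p_{\ba}=0$, I would use that $p_{\ba}$ is a tensor product, over disjoint sets of tensor factors, of elementary pieces: a factor $v_+$ for each left ray, a factor $v_-\otimes v_+-v_+\otimes v_-$ for each left cup, and the single piece $w_r\in V^{\otimes r}\otimes S^mV$ occupying the $r$ left endpoints of crossing cups together with the symmetric tensorand. Since $e$ acts as the sum $\sum_{\mathrm{blocks}}e_{\mathrm{block}}$ of operators acting within these disjoint blocks, and applying $e$ within one block raises that block's weight by $(1,-1)$ while fixing all others, the resulting terms lie in pairwise distinct joint-weight spaces of the tensor product and are linearly independent. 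Hence $e\cdot p_{\ba}=0$ if and only if $e$ annihilates each elementary piece separately.

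\emph{Checking the pieces.} The ray piece is immediate from $ev_+=0$, and the cup piece follows from $e(v_-\otimes v_+)=v_+\otimes v_+=e(v_+\otimes v_-)$, whence $e(v_-\otimes v_+-v_+\otimes v_-)=0$. The only substantive point — which I expect to be the main obstacle — is $e\cdot w_r=0$. Here $e$ acts on each $V$-tensorand by $v_-\mapsto v_+$ (killing $v_+$) and on $S^mV$ by $v_j\mapsto j\,v_{j-1}$. Expanding, the contribution of the $V$-tensorands to the coefficient of a basis vector $v_J\otimes v_{\lvert J\rvert-1}$ comes from the sets $I=J\setminus\{p\}$ with $p\in J$, giving $\lvert J\rvert\,(-1)^{\lvert J\rvert-1}$, while the contribution of the $S^mV$-tensorand comes from $I=J$, giving $\lvert J\rvert\,(-1)^{\lvert J\rvert}$. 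Since every basis vector occurring in $e\cdot w_r$ is of the form $v_J\otimes v_{\lvert J\rvert-1}$, the total coefficient is
\[
\lvert J\rvert\bigl((-1)^{\lvert J\rvert-1}+(-1)^{\lvert J\rvert}\bigr)=0,
\]
so $e\cdot w_r=0$. The bookkeeping of signs in this cancellation is the only delicate step; everything else is a direct weight count, and together they show $p_{\ba}$ is a highest weight vector of weight $(n-d,d)$.
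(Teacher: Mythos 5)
Your proof is correct and follows essentially the same route as the paper: reduce to the three building blocks ($v_+$, the cup vector, and $w_r$) using that $e$ acts as a sum of operators on disjoint sets of tensorands, then check each block, with the weight count $(d-r)(1,1)+\ell(1,0)+(m,r)=(n-d,d)$. The paper declares the check that $w_r$ is highest weight ``immediate,'' whereas you carry out the sign cancellation for $e\cdot w_r=0$ explicitly; this is just additional detail, not a different argument.
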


\begin{proof}
    Since permuting the first $n-m$ tensorands of $V^{\otimes(n-m)}\otimes S^mV$ commutes with the action of $\mathfrak{gl}_2$, it suffices to check that $v_-\otimes v_+-v_+\otimes v_-$, $v_+$ and $w_r$ are highest weight vectors, which is immediate. The weights of $v_-\otimes v_+-v_+\otimes v_-$, $v_+$ and $w_r$ are respectively $(1,1)$, $(1,0)$ and $(m,r)$, the weight of $p_{\ba}$ is then $(n-d,d)$.
\end{proof}

We also notice that $p_{\ba} = v_{\alpha(\ba)} + \text{higher terms}$, where $\alpha(\ba)$ is the $\Delta$-weight obtained by orienting the cups in $\ba$ counterclockwise and the rays upward, and $v_{\alpha(\ba)}$ is obtained by replacing $\down$ by $v_{-}$ and $\up$ by $v_{+}$ and then tensoring by $v_{0}$. Here, the higher terms are with respect to  $v_{\varepsilon_{1}}\otimes\cdots\otimes v_{\varepsilon_{n-m}}\otimes v_{i} < v_{\varepsilon'_{1}}\otimes\cdots\otimes v_{\varepsilon'_{n-m}}\otimes v_{i'}$ if, for the first index such that $\varepsilon_{i}\neq \varepsilon'_{i}$, we have $\varepsilon_i < \varepsilon'_i$, with the convention $-<+$. Therefore, the family $\{p_{\ba} \mid \ba\in \mathbb{B}_{n-d,d,m}\}$ is free.

\begin{lem}
    The set $\{p_{\ba} \mid \ba\in \mathbb{B}_{n-d,d,m}\}$ is a basis of the highest weight vectors of weight $(n-d,d)$ in $V^{\otimes(n-m)}\otimes S^{m}V$.
\end{lem}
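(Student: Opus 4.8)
The plan is to combine the two facts already established — that each $p_{\ba}$ is a highest weight vector of weight $(n-d,d)$ (the preceding lemma), and that the family $\{p_{\ba}\}_{\ba\in\mathbb{B}_{n-d,d,m}}$ is linearly independent (the leading-term observation just above, $p_{\ba}=v_{\alpha(\ba)}+\text{higher terms}$ with distinct leading monomials) — with a single dimension count. Since a linearly independent family of highest weight vectors of a fixed weight is automatically a basis of the whole space of such vectors once its cardinality matches the dimension of that space, it suffices to prove
\[
|\mathbb{B}_{n-d,d,m}| \;=\; \dim\bigl\{\text{highest weight vectors of weight }(n-d,d)\text{ in }V^{\otimes(n-m)}\otimes S^m V\bigr\},
\]
and the right-hand side is exactly the multiplicity of the simple $\mathfrak{gl}_2$-module $V_{(n-d,d)}$ in $V^{\otimes(n-m)}\otimes S^m V$.

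First I would compute this multiplicity by a weight-space argument. A monomial basis vector $v_{\varepsilon_1}\otimes\cdots\otimes v_{\varepsilon_{n-m}}\otimes v_i$ has weight $(n-d,d)$ precisely when the number of $v_-$ among the first $n-m$ tensorands equals $d-i$, so the $(n-d,d)$-weight space has dimension $\sum_{i=0}^m\binom{n-m}{d-i}$, and likewise the $(n-d+1,d-1)$-weight space has dimension $\sum_{i=0}^m\binom{n-m}{d-1-i}$. The multiplicity of $V_{(n-d,d)}$ equals the difference of these two dimensions, and cancelling the common range of summation gives
\[
\dim\bigl\{\text{highest weight vectors of weight }(n-d,d)\bigr\} \;=\; \binom{n-m}{d}-\binom{n-m}{d-m-1}.
\]

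Next I would identify $|\mathbb{B}_{n-d,d,m}|$ with the same quantity. By the bijection of \autoref{lem:bijection_GLW_cups}, applied with the number of cups equal to $d$, the set $\mathbb{B}_{n-d,d,m}$ is in bijection with the standard Young tableaux of the skew shape $(n-d,d)/(m)$; equivalently, one can route through the corollary to \autoref{thm:paving} together with \autoref{prop:action_symmetric_hom}, which give $|\mathbb{B}_{n-d,d,m}|=\dim H_{2d}(\dspr_{(n-k,k),m})=\dim V_{(n-d,d)/(m)}$, the number of standard Young tableaux of shape $(n-d,d)/(m)$. Finally, the Aitken (Jacobi--Trudi) determinant formula for the number of standard Young tableaux of a skew shape evaluates, for the two-row shape $(n-d,d)/(m)$, to $\binom{n-m}{d}-\binom{n-m}{d-m-1}$, matching the multiplicity computed above. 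Hence the linearly independent family $\{p_{\ba}\}$ has exactly the right cardinality and is therefore a basis of the space of highest weight vectors of weight $(n-d,d)$.

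The main obstacle is the bookkeeping in the second step rather than any conceptual difficulty: one must verify that the purely combinatorial bijection between $\mathbb{B}_{n-d,d,m}$ and fillings of $(n-d,d)/(m)$ is genuinely the cup-count-$d$ analogue of \autoref{lem:bijection_GLW_cups} — in particular that the $\Delta$-condition on the last $m$ vertices corresponds exactly to deleting the first $m$ boxes of the top row — and that the two independently computed expressions are matched correctly across all degenerate ranges of $d$ and $m$, where several of the binomial coefficients vanish. A useful consistency check is $\lambda=(3,3)$, $m=2$: for $d=2$ both sides give $\binom{4}{2}=6$, and for $d=3$ both give $\binom{4}{3}-\binom{4}{0}=3$, in agreement with \autoref{ex:first_example}.
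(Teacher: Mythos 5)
Your argument is correct and follows essentially the same route as the paper: the family is already known to be free from the leading-term observation, so it suffices to match its cardinality with the multiplicity of $V_{(n-d,d)}$ in $V^{\otimes(n-m)}\otimes S^mV$, which the paper does in one line by citing branching rules to identify that multiplicity with the number of standard tableaux of skew shape $(n-d,d)/(m)$ and hence with $\lvert\mathbb{B}_{n-d,d,m}\rvert$. Your version merely makes both counts explicit (weight-space differences on one side, the Jacobi--Trudi determinant on the other, each equal to $\binom{n-m}{d}-\binom{n-m}{d-m-1}$), which is a harmless and correct elaboration of the same proof.
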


\begin{proof}
    Since the family is linearly independent, it remains to show that we have the correct numbers of such vectors. But, using the branching rules for $\mathfrak{gl}_{2}$-modules, we see that the number of summands of $V^{\otimes(n-m)}\otimes S^{m}V$ isomorphic to $V_{(n-d,d)}$ is the number of standard tableaux of skew shape $(n-d,d)/(m)$, which is also the cardinality of $\mathbb{B}_{n-d,d,m}$.
\end{proof}

As a corollary, we have a basis of the $H_{n-m}$-module $L_{d}$, and we now compute the action of the generators of the degenerate affine Hecke algebra to compare with the representation obtained on the cohomology of the $\Delta$-Springer variety. 

\begin{prop}
    Let $\ba\in\mathbb{B}_{n-d,d,m}$. The action of $s_i$ on $p_{\ba}$ is obtained by stacking the diagram
    \begin{gather*}
        \begin{tikzpicture}
            \draw[thick] (0,0) -- +(0,-1);
            \draw[thick] (.75,0) -- +(0,-1);
            \draw[thick] (1.25,0) -- +(.5,-1);
            \draw[thick] (1.75,0) -- +(-.5,-1);
            \draw[thick] (2.25,0) -- +(0,-1);
            \draw[thick] (3,0) -- +(0,-1);
            \node at (.4,-.5) {$\cdots$};
            \node at (2.65,-.5) {$\cdots$};
            \node[above] at (1.20,0.03) {\tiny{$i$}};
            \node[above] at (1.75,0) {\tiny{$i+1$}};
        \end{tikzpicture}
    \end{gather*}
    on top of $\ba$ and using the same relations as in \autoref{prop:action_symmetric}.
\end{prop}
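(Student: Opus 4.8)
\emph{The plan} is to reduce the whole statement to one computation on $V\otimes V$, namely that the Casimir $\Omega$ acts there as the flip, and then to read off the diagrammatic calculus from three elementary identities. First I would evaluate $\Omega=h_1\otimes h_1+h_2\otimes h_2+e\otimes f+f\otimes e$ on the four basis vectors $v_\epsilon\otimes v_{\epsilon'}$ of $V\otimes V$; a direct check gives $\Omega(v_\epsilon\otimes v_{\epsilon'})=v_{\epsilon'}\otimes v_\epsilon$, so $\Omega$ is the permutation operator $P$ on $V\otimes V$ (this is the standard fact that the split Casimir of $\mathfrak{gl}_2$ acts as the flip on the square of the natural module). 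Since $s_i\mapsto\Omega_{i,i+1}$ and $S_{n-m}$ permutes only the first $n-m$ tensorands, all of type $V$, the operator $s_i$ simply transposes the $i$-th and $(i+1)$-th copies of $V$ inside $p_{\ba}$ and never touches the $S^mV$ factor.

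Next I would set up the dictionary between the building blocks of $p_{\ba}$ and the generators of the diagrammatics. Let $\cap\colon V\otimes V\to\mathbb{C}$ be given by $\cap(v_-\otimes v_+)=-\cap(v_+\otimes v_-)=-1$ and $\cap(v_\pm\otimes v_\pm)=0$, and let $\cup\colon\mathbb{C}\to V\otimes V$ send $1$ to $v_-\otimes v_+-v_+\otimes v_-$, so that $\tau:=\cup\circ\cap$ satisfies $\cap\circ\cup=-2$, hence $\tau^2=-2\tau$, and the operator identity $P=\mathrm{id}+\tau$ holds on $V\otimes V$. This last identity is exactly the skein relation of \autoref{prop:action_symmetric}, with $\mathrm{id}$ the parallel strands and $\tau$ the cup-cap. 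Under $\ba\mapsto p_{\ba}$ a cup becomes $\cup(1)$, a ray becomes $v_+$, and the cut-crossing cups become $w_r$, and the three local relations then translate verbatim: bubble removal is $\cap\circ\cup=-2$, cap removal is $\cap(v_+\otimes v_+)=0$, and the vanishing of diagrams with a cup entirely right of the cut line reflects that there is a single $S^mV$ tensorand, so no such cup can be produced. The signs in $\cup(1)$ and in $w_r$ are precisely those making these values match the signs $(-1)^{\Lambda_\alpha(U)}$ appearing in $L_\alpha$.

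With this dictionary in place, verifying that $s_i\cdot p_{\ba}$ equals the skein-theoretic stacking becomes a case analysis on the positions of $i$ and $i+1$ relative to the cups and rays of $\ba$, running exactly parallel to \cite{russell} and to the computation behind \autoref{prop:action_symmetric}. Writing $P_{i,i+1}=\mathrm{id}_{i,i+1}+\tau_{i,i+1}$, the identity summand reproduces the parallel-strands term and the $\tau_{i,i+1}$ summand caps off positions $i,i+1$ and recreates an antisymmetric pair, i.e.\ the cup-cap term; resolving this against the local configuration yields the loop value $-2$ when $i,i+1$ are joined by a cup, zero when they are two rays, and the reconnected diagram otherwise, matching the recipe of \autoref{prop:action_symmetric} termwise.

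\emph{The main obstacle} is the bookkeeping for the cut-crossing cups, whose left endpoints $a_1<\cdots<a_r$ are entangled with $S^mV$ inside $w_r$ rather than sitting as standalone tensorands. I would first record that $w_r$ is symmetric under permuting its first $r$ tensorands, so transposing two such left endpoints fixes $p_{\ba}$, matching the isotopy invariance of the diagram; the delicate case is transposing a cut-crossing left endpoint with an ordinary cup-endpoint or ray, where I would expand $w_r$ and check that $\tau_{i,i+1}$ reproduces exactly the reconnection prescribed by the skein relation, with the forbidden reconnections vanishing for the dimensional reason above. Once these local identities are confirmed, they agree termwise with the rules of \autoref{prop:action_symmetric}, which completes the proof.
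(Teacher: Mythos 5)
Your proposal is correct and follows essentially the same route as the paper, which simply asserts that the statement ``follows from standard calculations'' and records the one nontrivial observation you also make, namely the symmetry of $w_r$ under permuting its first $r$ tensorands. Your write-up just makes the standard calculation explicit (the split Casimir acting as the flip on $V\otimes V$, the identity $P=\mathrm{id}+\tau$ realizing the skein relation, $\cap\circ\cup=-2$ for bubble removal, and $\cap(v_+\otimes v_+)=0$ for cap removal), all of which check out.
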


\begin{proof}
    This follows from standard calculations. For example, the vector $w_r$ is clearly invariant by permutation of any two consecutive tensorands among the first $r$, which corresponds to the last point of \autoref{prop:action_symmetric}.
\end{proof}

It remains to compute the action of the polynomial generator of the degenerate affine Hecke algebra. We compute the action of $x_{n-m}$ by distinguishing three cases.

\begin{prop}
    \label{prop:action_quantum_diagrams}
    Let $\ba\in \mathbb{B}_{n-d,d,m}$. We have:
    \[
        x_{n-m}\cdot p_{\ba} = 
        \begin{cases}
            0 & \text{if } n-m \text{ is the endpoint of a ray},\\
            (m+1)p_{\ba} & \text{if } n-m \text{ is the left endpoint of a cup},\\
            - \sum_{i=0}^{r}(m-r+i)p_{\ba^i} &\text{if } n-m \text{ is the right endpoint of a cup},
        \end{cases}
    \]
    where, in the third case, the integer $r$ and the diagrams $\ba^{i}$ are defined similarly as in the previous section.
\end{prop}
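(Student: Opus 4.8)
The plan is to compute $x_{n-m}\cdot p_{\ba}$ directly from the explicit $H_{n-m}$-action on $V^{\otimes(n-m)}\otimes S^mV$. Since $x_{n-m}\mapsto -\sum_{n-m<j\le n-m+1}\Omega_{n-m,j}+m\,\id = -\Omega_{n-m,n-m+1}+m\,\id$, only the $(n-m)$-th tensorand and the $S^mV$-factor are affected, every other tensorand being a spectator. First I would record the local action on $V\otimes S^mV$, which from the given $\mathfrak{gl}_2$-actions reads
\[
\Omega(v_+\otimes v_j)=(m-j)\,v_+\otimes v_j+j\,v_-\otimes v_{j-1},\qquad \Omega(v_-\otimes v_j)=j\,v_-\otimes v_j+(m-j)\,v_+\otimes v_{j+1},
\]
so that, writing $x^{\mathrm{loc}}=-\Omega+m$,
\[
x^{\mathrm{loc}}(v_+\otimes v_j)=j\,v_+\otimes v_j-j\,v_-\otimes v_{j-1},\qquad x^{\mathrm{loc}}(v_-\otimes v_j)=(m-j)\,v_-\otimes v_j-(m-j)\,v_+\otimes v_{j+1}.
\]

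The first two cases are then short. If $n-m$ is the endpoint of a ray, crossinglessness is the key input: a crossing cup has its right endpoint strictly right of the cut line, so any crossing cup whose left endpoint were $<n-m$ would pass over the vertex $n-m$ and cross its ray; hence $r=0$. Thus the $S^mV$-factor of $p_{\ba}$ is exactly $v_0$ while the $(n-m)$-th tensorand is $v_+$, and $x^{\mathrm{loc}}(v_+\otimes v_0)=0$ gives $x_{n-m}\cdot p_{\ba}=0$, matching \autoref{lem:action_dunkl_ray}. If $n-m$ is the left endpoint of a cup, that cup necessarily crosses the cut line and is the rightmost crossing cup, so $n-m=a_r$ and $w_r$ is the only block coupling position $n-m$ to $S^mV$. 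Splitting the defining sum of $w_r$ according to whether $r\in I$ and applying $x^{\mathrm{loc}}$ to each term, the two resulting families recombine into $(m+1)\,w_r$, so $x_{n-m}\cdot p_{\ba}=(m+1)\,p_{\ba}$, matching \autoref{lem:action_dunkl_left}.

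The main obstacle is the right-endpoint case, which I would run exactly parallel to the proof of \autoref{lem:action_dunkl_right}. Here $n-m$ is the right endpoint of the cup $a_{r+1}\CupConnect(n-m)$, so in $p_{\ba}$ the $(n-m)$-th tensorand is coupled to position $a_{r+1}$ through the singlet $v_-\otimes v_+-v_+\otimes v_-$, while the $S^mV$-factor is coupled to $a_1,\ldots,a_r$ through $w_r$. I would expand $p_{\ba}$ over the index set of $w_r$, apply $x^{\mathrm{loc}}$ to each factor $v_\pm^{(n-m)}\otimes v_{|I|}$ with positions $a_1,\ldots,a_{r+1}$ held fixed, and then reorganize: the flips $v_+\leftrightarrow v_-$ at position $n-m$ together with the index shifts $v_{|I|}\mapsto v_{|I|\pm1}$ in $S^mV$ convert the data of $\ba$ into that of the moved-cup diagrams $\ba^{i}$, while collecting the factors $j$ and $m-j$ produces the coefficients $m-r+i$. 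The delicate point is the bookkeeping of the signs $(-1)^{|I|}$ together with the disjoint decomposition $\mathcal{U}_{\alpha^{i}}=\mathcal{U}_{i-1}\sqcup\mathcal{U}_{i}$ and the telescoping that organizes it; since this combinatorics is literally the one already carried out on the homology side, the cleanest write-up is to transport that argument, checking that $p_{\ba}\mapsto L_{\alpha(\ba)}$ sends each intermediate identity to its counterpart and thereby yields $x_{n-m}\cdot p_{\ba}=-\sum_{i=0}^{r}(m-r+i)\,p_{\ba^{i}}$.
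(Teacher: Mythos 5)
Your local formulas for $x^{\mathrm{loc}}=-\Omega+m$ on $V\otimes S^mV$ are correct, and your treatment of the first two cases is sound and essentially the paper's: in the ray case crossinglessness forces $r=0$, so the $S^mV$-factor of $p_{\ba}$ is $v_0$ and $x^{\mathrm{loc}}(v_+\otimes v_0)=0$; in the left-endpoint case the cup at $n-m$ is the rightmost crossing cup, and splitting $w_r$ according to $r\in I$ versus $r\notin I$ does recombine to $(m+1)w_r$ (the paper phrases this as $\Omega\cdot w_r=-w_r$ after using the symmetric group to move the relevant tensorand next to $S^mV$, which is the same computation).

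The gap is in the third case, which is the substance of the proposition. You set up the right computation (apply $x^{\mathrm{loc}}$ to the slot $n-m$, which is coupled to $a_{r+1}$ by the singlet and to $S^mV$ through $w_r$), but you do not carry it out; instead you propose to ``transport'' the combinatorics of \autoref{lem:action_dunkl_right} via $p_{\ba}\mapsto L_{\alpha(\ba)}$. That transport is not available here. First, the intermediate identities do not correspond term-by-term: the monomial expansion of $p_{\ba}$ runs over all $2^r$ subsets $I\subseteq\{1,\ldots,r\}$ (the terms of $w_r$) together with the $S^mV$-index $v_{\lvert I\rvert}$, whereas on the homology side every $U\in\mathcal{U}_{\alpha}$ contains \emph{all} left endpoints of crossing cups, so the crossing cups contribute no sum at all and there is no analogue of the grading $v_{\lvert I\rvert}$; the two bases are not matched monomial-by-monomial, so the bijections and telescoping of \autoref{lem:action_dunkl_right} have no well-defined image. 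Second, if the transport is instead meant to go through the equivariance of $L_{\alpha}\mapsto p_{\ba(\alpha)}$, the argument is circular: equivariance of that map is exactly what this proposition, together with the homology-side lemmas, is used to establish in the proof of \autoref{cor:irred_H_action_Delta}. The paper closes this case by an independent computation on the tensor side: it expands $\Omega_{r+2,r+3}\cdot w_{r,r+1,r+2}+w_{r,r+1,r+2}$ in the monomials $v_J\otimes v_{\lvert J\rvert-1}$ and verifies, by a case analysis on the sign pattern $+^{\alpha_1}-^{\beta_1}\cdots+^{\alpha_{k+1}}$ of $J$, that the coefficient of each such monomial in $\sum_{i=1}^{r+1}(m-r+i)w_{r,i,i+1}+(m-r)v_+\otimes w_{r+1}$ equals the required $(-1)^{\lvert J\rvert-1}(m-\lvert J\rvert+2)$, respectively $(-1)^{\lvert J\rvert}\lvert J\rvert$. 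Some such coefficient check on the tensor side is unavoidable and is missing from your argument.
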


\begin{proof}
    We first suppose that $n-m$ is the endpoint of a ray in $\ba$. Therefore, there are only rays right of the cut line, and $p_{\ba} = v\otimes v_+\otimes v_0$ for some $v\in V^{\otimes(n-m-1)}$. Since $\Omega\cdot (v_+\otimes v_0) = mv_+\otimes v_0$, we find that $x_{n-m}\cdot p_{\ba} = 0$.
    
    Now, we suppose that $n-m$ is the left endpoint of a cup. The corresponding right endpoint is then right of the cut line. Using the action of the symmetric group, it then suffices to compute the action of $\Omega_{r,r+1}$ on the vector $w_r$. Given $I\subset\{1,\ldots,r\}$, we have 
    \[
    \Omega_{r,r+1}\cdot(v_I\otimes v_{\lvert I\rvert}) = 
    \begin{cases}
        (m-\lvert I \rvert) v_I\otimes v_{\lvert I \rvert} + \lvert I \rvert v_{I\backslash\{n-m\}}\otimes v_{\lvert I \rvert -1} & \text{if } n-m \in I,\\
        \lvert I \rvert v_I\otimes v_{\lvert I \rvert} + (m-\lvert I \rvert) v_{I\cup\{n-m\}}\otimes v_{\lvert I \rvert +1}& \text{if } n-m \not\in I.
    \end{cases}
    \]
    This implies that $\Omega_{r,r+1}\cdot w_r = -w_r$ and that $x_{n-m}\cdot p_{\ba} = (m+1)p_{\ba}$.
    
    Finally, suppose that $n-m$ is the right endpoint of a cup. Once again, using the action of the symmetric group, it suffices to compute the action of $\Omega_{r+2,r+3}$ on the vector 
    \[
    \sum_{I\subset\{1,\ldots,r\}} (-1)^{\lvert I \rvert} v_{I}\otimes(v_-\otimes v_+ - v_+\otimes v_-) \otimes v_{\lvert I \rvert}. 
    \]
    For $1\leq i \leq r$, denote by $w_{r,i,i+1}$, the vector in $V^{\otimes(r+2)}\otimes S^mV$ given by inserting into $w_r$ the vector $v_-\otimes v_+ - v_+\otimes v_i$ at the tensorands $i$ and $i+1$. We have
    \begin{align*}
        \Omega_{r+2,r+3}\cdot w_{r,r+1,r+2} = \sum_{I\subset\{1,\ldots,r\}}(-1)^{\lvert I \rvert} v_{I}\otimes&\big(v_-\otimes ((m-\lvert I\rvert)v_+\otimes v_{\lvert I\rvert} + \lvert I \rvert v_{-}\otimes v_{\lvert I \rvert -1})\\
        &-v_+\otimes(\lvert I \rvert v_-\otimes v_{\lvert I \rvert} + (m-\lvert I\rvert)v_+\otimes v_{\lvert I \rvert +1})\big).
    \end{align*}
    We then find that $\Omega_{r+2,r+3}\cdot w_{r,r+1,r+2}+w_{r,r+1,r+2}$ is a linear combination of terms of the form $v_{J}\otimes v_{\lvert J \rvert -1}$, with $J\subset\{1,\ldots,r+2\}$, with coefficients $(-1)^{\lvert J\rvert -1}(m-\lvert J \rvert +2)$ if $r+2\in J$ and $(-1)^{\lvert J\rvert}\lvert J \rvert$ if $r+2\not\in J$. We fix $J\subset\{1,\ldots,r+2\}$ and compute the coefficient of $v_{J}\otimes v_{\lvert J \rvert}$ in the sum
    \begin{gather}
    \label{eq:sum}
        \sum_{i=1}^{r+1}(m-r+i)w_{r,i,i+1} + (m-r)v_+\otimes w_{r+1}.
    \end{gather}
    We will distinguish four cases, whether $1$ and $r+2$ are in $J$ or not. We will treat only the case $1,r+2\in J$, the other cases being similar. The set $J$ correspond to a word $+^{\alpha_1}-^{\beta_1}\cdots-^{\beta_k}+^{\alpha_{k+1}}$, with $\alpha_i,\beta_i>0$. Then, for each subword $-+$ in the position $i,i+1$, $v_{J}\otimes v_{\lvert J \rvert -1}$ appears in $w_{r,i,i+1}$ with a coefficient $(-1)^{\lvert J \rvert -1}$ and for each subword $+-$ in the position $i,i+1$, $v_{J}\otimes v_{\lvert J \rvert -1}$ appears in $w_{r,i,i+1}$ with a coefficient $-(-1)^{\lvert J \rvert-1}$. Therefore, the total coefficient of $v_{J}\otimes v_{\lvert J \rvert -1}$ in the sum \eqref{eq:sum} is
    \begin{align*}
        &(-1)^{\lvert J \rvert -1}\left(\sum_{j=1}^k(m-r+\sum_{s=1}^{j}(\alpha_s+\beta_s))-\sum_{j=1}^k(m-r+\sum_{s=1}^{j-1}(\alpha_s+\beta_s)+\alpha_j)+(m-r)\right)\\
        &=(-1)^{\lvert J \rvert -1}(m-r+\sum_{j=1}^{k}\beta_k).
    \end{align*}
    But $\sum_{j=1}^{k}\beta_k = (r+2)-\lvert J \rvert$ and we find that the coefficient is $(-1)^{\lvert J \rvert -1}(m-\lvert J \rvert +2)$ as expected. We finally obtain that
    \begin{align*}
        \Omega_{r+2,r+3}\cdot w_{r,r+1,r+2} 
        &= -w_{r,r+1,r+2} + \sum_{i=1}^{r+1}(m-r+i)w_{r,i,i+1} + (m-r)v_+\otimes w_{r+1}\\
        &= mw_{r,r+1,r+2} + \sum_{i=1}^{r}(m-r+i)w_{r,i,i+1} + (m-r)v_+\otimes w_{r+1},
    \end{align*}
    which concludes the proof.
\end{proof}

\begin{thm}\label{cor:irred_H_action_Delta}
    The $H_{n-m}$-module $H_{2d(\mathcal{S}^{\Delta}_{(n-k,k),m})}$ is irreducible.
\end{thm}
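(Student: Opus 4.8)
The plan is to identify the $H_{n-m}$-module $H_{2d}(\dspr_{(n-k,k),m})$ with the simple module $L_d$ occurring in Suzuki's bimodule decomposition of $V^{\otimes(n-m)}\otimes S^mV$ from \cite{suz98}. The crucial observation is that both modules carry a natural basis indexed by the \emph{same} set $\mathbb{B}_{n-d,d,m}$ of $\Delta$-cup diagrams on $n$ points with $d$ cups: on the geometric side, the classes $L_{\alpha}$ for which $C(\alpha)$ has exactly $d$ cups form a basis of $H_{2d}(\dspr_{(n-k,k),m})$; on the algebraic side, the vectors $p_{\ba}$ of weight $(n-d,d)$ form a basis of the space of highest weight vectors of that weight, which is precisely $\Hom_{\mathfrak{gl}_2}\bigl(V_{(n-d,d)},V^{\otimes(n-m)}\otimes S^mV\bigr)\cong L_d$ by the bimodule decomposition (the highest weight space of the irreducible $V_{(n-d,d)}$ being one-dimensional).

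First I would define the linear isomorphism $\phi\colon H_{2d}(\dspr_{(n-k,k),m})\to L_d$ by $L_{\alpha}\mapsto p_{C(\alpha)}$, using that $\alpha\mapsto C(\alpha)$ and $\ba\mapsto\alpha(\ba)$ are mutually inverse bijections between $\Delta$-weights with $d$ cups and diagrams in $\mathbb{B}_{n-d,d,m}$. Then I would verify that $\phi$ intertwines the two $H_{n-m}$-actions. Since $H_{n-m}$ is generated by the transpositions $s_1,\dots,s_{n-m-1}$ together with the single polynomial generator $x_{n-m}$, it suffices to check equivariance on these generators. For the symmetric group this is immediate: the action of $s_i$ on the $L_{\alpha}$ given in \autoref{prop:action_symmetric} and its action on the $p_{\ba}$ are described by the identical skein-theoretic rules. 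For $x_{n-m}$ one splits into the three cases according to whether $n-m$ is an endpoint of a ray, a left endpoint of a cup, or a right endpoint of a cup, and observes that the formulas in \autoref{lem:action_dunkl_ray}, \autoref{lem:action_dunkl_left} and \autoref{lem:action_dunkl_right} coincide term by term with those computed for $x_{n-m}\cdot p_{\ba}$ (in particular the same coefficients $0$, $m+1$, and $-\sum_{i=0}^{r}(m-r+i)$, and the same target diagrams $\ba^{i}$). Hence $\phi$ is an isomorphism of $H_{n-m}$-modules, so $H_{2d}(\dspr_{(n-k,k),m})\cong L_d$.

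Finally, since $L_d$ is a simple $H_{n-m}$-module by Suzuki's theorem, the module $H_{2d}(\dspr_{(n-k,k),m})\cong L_d$ is irreducible, which is exactly the assertion.

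I do not anticipate a serious obstacle, since all the genuinely computational work has already been carried out: the generator actions on both sides have been made fully explicit and shown to agree. The only point demanding care is the bookkeeping of the indexing bijection, namely confirming that $\phi$ really matches $L_{\alpha}$ with the highest weight vector attached to the \emph{same} underlying $\Delta$-cup diagram, so that the two families of action formulas are being compared on corresponding basis elements (rather than merely abstractly on bases of equal cardinality). Once this identification is pinned down, equivariance on the generators, and therefore the irreducibility, follows formally.
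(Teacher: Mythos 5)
Your proposal is correct and follows essentially the same route as the paper: the paper's proof also defines the isomorphism $L_{\alpha}\mapsto p_{\ba(\alpha)}$, observes that it matches bases and intertwines the $H_{n-m}$-actions by comparing the explicit generator formulas on both sides, and concludes irreducibility from the simplicity of $L_d$ in Suzuki's decomposition.
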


\begin{proof}
    The map $L_{\alpha}\mapsto p_{\ba(\alpha)}$ is an isomorphism of vector spaces since it sends a basis of $H_{2d(\mathcal{S}^{\Delta}_{(n-k,k),m})}$ onto a basis of $L_{d}$. Furthermore, the comparison of the action of $H_{n-m}$ on both sides shows that this isomorphism is $H_{n-m}$-equivariant, see \autoref{lem:action_dunkl_ray}, \autoref{lem:action_dunkl_left} and \autoref{lem:action_dunkl_right} for one side, and \autoref{prop:action_quantum_diagrams} for the other. Since $L_{d}$ is irreducible, so is $H_{2d}(\dspr_{(n-k,k),m})$.
\end{proof}

\subsection{Action in the extremal cases}

\subsubsection{Springer fiber of type \texorpdfstring{$A$}{A}}

In this section, we assume that $m=0$. In this extremal case, the $\Delta$-Springer variety is isomorphic to the Springer fiber associated with the two-row partition $(n-k,k)$. There is a well-known action of the symmetric group $S_n$ on the (co)homology of the Springer fiber, which has a skein theoretic interpretation \cite{russell}. We recover this action if we restrict the action of the degenerate affine Hecke algebra $H_n$ to the symmetric group $S_n$.

\begin{prop}
    Suppose that $m=0$. Then the generator $x_n$ acts by $0$. 
\end{prop}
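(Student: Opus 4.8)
The plan is to reduce the statement to a per-basis-element computation and then invoke the three case lemmas describing the action of $x_{n-m}$, specialised to $m=0$. Recall that when $m=0$ the $\Delta$-Springer variety is exactly the Springer fiber $\spr_{(n-k,k)}$, all $n=n-m$ vertices lie to the left of the cut line, and by the corollary to \autoref{thm:paving} the homology $H_*(\spr_{(n-k,k)})$ has a basis $\{L_\alpha\}$ indexed by the $\Delta$-weights $\alpha$ of type $(n-k,k,0)$. Hence it suffices to show $x_n\cdot L_\alpha=0$ for every such $\alpha$, and I would treat the position of the rightmost vertex $n$ in $C(\alpha)$ case by case.

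The key observation is that the left-endpoint case is vacuous: since there are no vertices to the right of the cut line, vertex $n$ can never be the left endpoint of a cup, as it would have to be joined to a vertex with a strictly larger label. Thus only two situations occur. If $n$ is the endpoint of a ray in $C(\alpha)$, then \autoref{lem:action_dunkl_ray} gives $x_n\cdot L_\alpha=0$ immediately.

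If $n$ is instead the right endpoint of a cup, I would apply \autoref{lem:action_dunkl_right}. There the integer $r$ counts the left endpoints of cups whose right endpoints lie to the right of the cut line; since $m=0$ there are no such vertices, so $r=0$. The formula of \autoref{lem:action_dunkl_right} then collapses to a single term,
\[
x_n\cdot L_\alpha=-\sum_{i=0}^{0}(m-r+i)L_{\alpha^{i}}=-(m-r)L_{\alpha^{0}}=0,
\]
using $m=r=0$; the remark following \autoref{lem:action_dunkl_right} confirms that the otherwise undefined term $L_{\alpha^0}$ enters with coefficient $0$ precisely when $m=r$, so no separate discussion of its definition is needed.

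Combining the two cases, $x_n$ annihilates every basis vector $L_\alpha$, hence acts as $0$ on all of $H_*(\spr_{(n-k,k)})$. There is no genuine obstacle here: the whole content is the specialisation $m=0$, which simultaneously empties the left-endpoint branch and forces the coefficient in the right-endpoint branch to vanish, so the argument reuses the already-established \autoref{lem:action_dunkl_ray} and \autoref{lem:action_dunkl_right} verbatim.
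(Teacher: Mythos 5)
Your proof is correct and follows the paper's own argument: the paper likewise observes that with $m=0$ vertex $n$ cannot be the left endpoint of a cup and then cites \autoref{lem:action_dunkl_ray} and \autoref{lem:action_dunkl_right}. Your write-up merely makes explicit the computation $r=0$, $-(m-r)L_{\alpha^0}=0$ that the paper leaves implicit.
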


\begin{proof}
    Since $m=0$, there are no points on the right of the cut line, and $n$ cannot be the left endpoint of a cup. The result then follows from \autoref{lem:action_dunkl_ray} and \autoref{lem:action_dunkl_right}.
\end{proof}

The action of the generator $x_i$ is then given by the Jucys--Murphy element $\sum_{j=i+1}^n (i\ j)$. The action of $H_n$ is entirely recovered from the action of the (group algebra of the) symmetric group $S_n$.

\subsubsection{Exotic Springer fiber}

In the extremal case $m=k$, the $\Delta$-Springer variety is isomorphic to the exotic Springer fiber associated with the one-row bipartition $(n-2k,k)$. There is a well-known action of the Weyl group $W_{n-k}$ of type $C_{n-k}$ on the (co)homology of the exotic Springer fiber, which can be easily described~\cite{SW22}. 

Recall that in~\autoref{S:Haction} we have constructed an action of the degenerate affine Hecke algebra $H_{n-m}$ on the homology of the $\Delta$-Springer variety $\dspr_{(n-k,k),m}$. We now describe this action using $W_{n-m}$,  which we consider as the group of signed permutations on the set $\{-(n-m),\ldots,-1,1,\ldots,n-m\}$. 

For $1\leq i < n-m$, we denote by $s_i$ the permutation $(i\ i+1)(-i\ -(i+1))$ and by $s_i'$ the permutation $(i\ -i)$. The group $W_{n-m}$ acts on $H_{2d}((\mathbb{S}^2)^{n-m})$: the element $s_i$ acts by usual permutations and $s'_i$ acts on a line diagram $l_U\in H_{2d}((\mathbb{S}^2)^{n-m})$ by
\[
s'_i \cdot l_U = 
\begin{cases}
l_U & \text{if }i\not\in U,\\
-l_U & \text{if }i\in U.
\end{cases}
\]

For $1\leq i \leq n-m$, we define the Jucys--Murphy elements $J_i$ and $\tilde{J}_i$ by
\[
J_i = \sum_{j=1}^{i-1} (j\,\ i)
\quad\text{and}\quad
\tilde{J}_i = \sum_{j=i+1}^{n-m}(i\,\ j).
\]

\begin{prop}
    The action of $x_i$ on $H_{2d}((\mathbb{S}^2)^{n-m})$ is equal to the action of
    \[
    \frac{1-s'_i}{2}(a_d-n+m+1+J_i) - \frac{1+s'_i}{2}\tilde{J}_i.
    \]
\end{prop}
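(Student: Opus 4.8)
The plan is to check the claimed operator identity directly on the distinguished basis $(l_U)$ of $H_{2d}((\mathbb{S}^2)^{n-m})$, where $U$ runs over the $d$-element subsets of $\{1,\dots,n-m\}$, since every operator in sight is $\mathbb{C}$-linear. The first step is to record that $s'_i$ acts diagonally, by $+1$ on $l_U$ when $i\notin U$ and by $-1$ when $i\in U$; consequently $\frac{1-s'_i}{2}$ is the projection onto $\spn(l_U \,:\, i\in U)$ and $\frac{1+s'_i}{2}$ the projection onto $\spn(l_U \,:\, i\notin U)$. This realizes the two summands of the proposed formula as operators adapted to the two complementary sets of basis vectors, matching exactly the $i\in U$ versus $i\notin U$ case split built into the definition of the Dunkl operator $\mathcal{D}_i^{(\boldsymbol{\xi})}$ that defines $x_i$.

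The second step is to compute the Jucys--Murphy elements on line diagrams. Each transposition $(j\,i)$ simply permutes the $j$-th and $i$-th lines, so it fixes $l_U$ when $j$ and $i$ lie on the same side of the membership relation and otherwise moves the dot, sending $l_U$ to $l_{U\setminus\{i\}\cup\{j\}}$ or $l_{U\setminus\{j\}\cup\{i\}}$. Hence $J_i=\sum_{j<i}(j\,i)$ and $\tilde J_i=\sum_{j>i}(i\,j)$ each decompose into a diagonal part, counting the transpositions acting trivially, and an off-diagonal sum over the dot-moving transpositions.

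The third step is to assemble these pieces and apply the projectors, treating the two cases separately. When $i\in U$, the projector $\frac{1-s'_i}{2}$ annihilates the off-diagonal part of $J_i$ (those terms have $i$ removed from $U$) and retains its diagonal contribution $\lvert\{j<i \mid j\in U\}\rvert$, while $-\frac{1+s'_i}{2}\tilde J_i$ contributes exactly $-\sum_{j>i,\,j\notin U} l_{U\setminus\{i\}\cup\{j\}}$; this reproduces verbatim the off-diagonal sum of $\mathcal{D}_i^{(\boldsymbol{\xi})}$, and the diagonal scalars agree after rewriting $\lvert\{j<i \mid j\in U\}\rvert=(i-1)-\lvert\{j<i \mid j\notin U\}\rvert$ and using $a_d=\xi_d=n+1-d$ as fixed in \autoref{thm:stability_degenerate}. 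When $i\notin U$ the roles of the projectors are swapped: the constant term of $\frac{1-s'_i}{2}(a_d-n+m+1+J_i)$ is killed, leaving the off-diagonal sum $\sum_{j<i,\,j\in U} l_{U\setminus\{j\}\cup\{i\}}$ of the Dunkl operator, while $-\frac{1+s'_i}{2}\tilde J_i$ yields the diagonal scalar $-\lvert\{j>i \mid j\notin U\}\rvert$, which equals the Dunkl diagonal $i-n+m+\lvert\{j>i \mid j\in U\}\rvert$ via $\lvert\{j>i \mid j\in U\}\rvert+\lvert\{j>i \mid j\notin U\}\rvert=(n-m)-i$.

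The computation is elementary and no genuine obstacle arises; the only delicate point, and the place where I expect to spend the most care, is the bookkeeping for the diagonal scalars, since the Dunkl operator records them through $\lvert\{j<i \mid j\notin U\}\rvert$ and $\lvert\{j>i \mid j\in U\}\rvert$ whereas the Jucys--Murphy form naturally produces the complementary counts, the two being reconciled precisely by the cardinality identities above (with the $i\in U$ diagonal pinning down $a_d=n+1-d$). Finally, since both sides are linear and agree on every $l_U$, they agree as operators, which completes the proof.
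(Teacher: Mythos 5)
Your proposal is correct and follows essentially the same route as the paper's proof: compute $J_i\cdot l_U$ and $\tilde J_i\cdot l_U$ by splitting each transposition's contribution into a diagonal and a dot-moving part, observe that $\frac{1-s'_i}{2}$ and $\frac{1+s'_i}{2}$ are the projections onto $\spn(l_U : i\in U)$ and $\spn(l_U : i\notin U)$ respectively, and match the result against the Dunkl-operator definition of $x_i$ (with $a_d=\xi_d$). You merely spell out the case analysis and the cardinality identities that the paper leaves implicit.
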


\begin{proof}
    Let $U\subset\{1,\ldots,n-m\}$. We have
    \[
    J_i\cdot l_U =
    \begin{cases}
        \displaystyle\lvert \{ 1\leq j < i \mid j\in U\}\rvert l_U + \sum_{\substack{j=1\\j\not\in U}}^{i-1}l_{U\setminus\{i\}\cup\{j\}} & \text{if }i \in U,\\
        \displaystyle\lvert \{ 1\leq j < i \mid j\not\in U\}\rvert l_U + \sum_{\substack{j=1\\j\in U}}^{i-1}l_{U\setminus\{j\}\cup\{i\}} & \text{if }i \not\in U,
    \end{cases}
    \]
    and
    \[
    \tilde{J}_i\cdot l_U =
    \begin{cases}
        \displaystyle\lvert \{ i< j \leq n-m \mid j\in U\}\rvert l_U + \sum_{\substack{j=i+1\\j\not\in U}}^{n-m}l_{U\setminus\{i\}\cup\{j\}} & \text{if }i \in U,\\
        \displaystyle\lvert \{ i < j \leq n-m \mid j\not\in U\}\rvert l_U + \sum_{\substack{j=i+1\\j\in U}}^{n-m}l_{U\setminus\{j\}\cup\{i\}} & \text{if }i \not\in U.
    \end{cases}
    \]
    The proof is complete once we notice that $\frac{1-s'_i}{2}\cdot l_U = \delta_{i\in U}l_U$ and $\frac{1+s'_i}{2}\cdot l_U = \delta_{i\not\in U}l_U$.
\end{proof}

Nonetheless, the homology of the $\Delta$-Springer variety $\dspr_{(n-k,k),m}$ is not stable by the action of $W_{n-m}$, but only under the action of the degenerate affine Hecke algebra. In the extremal case $m=k$, the homology of the $\Delta$-Springer variety is equal to the homology of $(\mathbb{S}^2)^{n-k}$ in degree $0,2,\ldots,2k$ and is $\{0\}$ in all other degrees. Therefore, the description of the action of $W_{n-k}$ given in \cite{SW22} is equivalent to the action of the degenerate affine Hecke algebra $H_{n-k}$: since both actions are irreducible, the Jacobson density theorem implies that the two actions generate $\End(H_{2d}((\mathbb{S}^2)^{n-k}))$ .


\vspace*{1cm}


\end{document}